  \newcommand{\R}{\mathbb{R}}
  \newcommand{\N}{\mathbb{N}}
   \newcommand{\Z}{\mathbb{Z}}
  \newcommand{\HR}{{}^* \R}
  \newcommand{\NS}{\mathscr}
\newcommand{\cart} {
\left(
\begin{array}{ccc}
a & 0 & 0 \\
0 & b & 0 \\
0 & 0 & \frac{1}{ab} \end{array}
\right)}
\newtheorem{theorem}{Theorem}
\newtheorem{lemma}[theorem]{Lemma}
\newtheorem{proposition}[theorem]{Proposition}
\newtheorem{corollary}[theorem]{Corollary}
\newtheorem{definition}[theorem]{Definition}
\newtheorem{remark}[theorem]{Remark}
\title{Conjugacy Limits of the Diagonal Cartan Subgroup in $SL_3( \mathbb{R})$ }  \date{}
\author{Arielle Leitner}
\date{\today}
\begin{document} 

\maketitle  
\let\thefootnote\relax\footnote{Department of Mathematics, University of California, Santa Barbara, CA 93106  \hfill Email: \href{mailto:aleitner@math.ucsb.edu}{aleitner@math.ucsb.edu}} %\email{aleitner@math.ucsb.edu} 

\begin{abstract}
 A \emph{conjugacy limit group} is the limit of a sequence of conjugates of the positive diagonal Cartan subgroup, $C \leq SL_3(\R).$  We prove a variant of a theorem of Haettel, and show that up to conjugacy, $C$ has 5 possible conjugacy limit groups.  Each conjugacy limit group is determined by a  nonstandard triangle.  We give a criterion for a sequence of conjugates of $C$ to converge to each of the 5 conjugacy limit groups.   MSC 57S25.
\end{abstract}
\textbf{Key words:}{ Geometric Transitions, Conjugacy Limit,  Nonstandard Analysis.}

\section{Motivation and the Main Theorems}%%%%%%%%%%%%%%%%%%%%%%%

We study geometries in the sense of Klein:  a $(G,X)$ geometry is a Lie group $G$ acting transitively on a manifold $X$. The idea of continuously deforming one kind of geometry into another appears in many areas of mathematics and physics.   For the most basic intuition, we take the example given in \cite{Cooper}, that a sequence of spheres tangent to a plane, with increasing radius, will limit to the tangent plane in the Hausdorff topology on closed sets.   
Another viewpoint mentioned in \cite{Cooper} is that of Lie algebras.  Homogeneous spaces have groups of isometries that are Lie groups, which have an associated Lie algebra determined by the Lie bracket $[\cdot, \cdot] :  \mathfrak{g} \times \mathfrak{g} \to \mathfrak{g}$. This is a bilinear map, and is determined by the values it takes on a basis, and hence by \emph{structure constants}.  We may continuously change the structure constants as long as $[\cdot, \cdot]$ still determines a Lie algebra.  This is related to theory of In\"{o}n\"{u}-Wigner contractions in physics, see \cite{Burde} and \cite{IW}. 

%Finally, \cite{Cooper} discusses study transitions between Thurston geometries- the eight geometries that play a major role in classifying compact three manifolds.  These are (almost) subgeometries of real projective geometry, \cite{Cooper}, and we can study geometric transitions in this context, as paths of conjugacies. 

Conjugacy limits of $C$ are related to deformations of convex $\R P^2$ structures.   Any discrete $\Z \oplus \Z$ subgroup of $C$ gives a convex projective structure on the torus.   Choi and Goldman give two examples of such convex projective structures in \cite{CG}.   There are two kinds of deformations: deforming lattices in $C$, or deforming $C$ inside $SL_3(\R)$.  This paper concerns the latter.

This paper classifies conjugacy limits of the positive diagonal Cartan 
subgroup $C \leq SL_3 ( \R)$.  %A  Cartan subalgebra of a Lie algebra is a nilpotent, self-normalizing Lie subalgebra.  A Cartan subgroup of a Lie group is a subgroup that corresponds to a Cartan subalgebra.  %In $SL_n(\R) $, Cartan subgroups are the maximal connected abelian semi-simple Lie subgroups, or maximal tori.  In particular, one Cartan subgroup in $SL_3 ( \R)$ is the subgroup of diagonal matrices, a 2-dimensional abelian group: 
%$$C:= \Big \{ \cart : a ,b \in \R \text{ and } a,b >0  \Big \} ,$$  which is isomorphic to $\R ^2$. 
%This paper is focused on classifying the conjugacy limits of the diagonal Cartan subgroup in $SL_3 ( \R)$. 
Let $G$ be a Lie group and $H$ a closed subgroup.  A sequence of subgroups $H_n$ of $G$ \emph{converges} to $H$ if the following two conditions are satisfied:\\
 (a) For every $h \in H$ there is a sequence $h_n\in H_n$ converging to $h$\\ 
 (b) For every sequence $h_n\in H_n$, if there is a subsequence which converges to $h$, then $h \in H$.   \\
 A subgroup $L \leq G$ is a  \emph{conjugacy limit} of a subgroup $H \leq G$ if there is a sequence of elements $P_n \in G$ such that $P_n H P_n ^{-1}$ converges to $L$. (See \cite{Cooper} definition 2.5).
% Let $C$ be the identity component of the diagonal Cartan subgroup in $SL_3(\R)$. 
The identity component of a conjugate of $C$ is the stabilizer of the vertices of a triangle in $\R P^2$.   We will show in theorem \ref{maintri} that a conjugacy limit of $C$ is characterized in terms of the stabilizer of a \emph{configuration}, which is a set whose elements are points and lines in $\R P^2$. %the stabilizer of a \emph{degenerate triangle}, the limit of a projective triangle under a sequence of projective transformations. 
 %We show that a conjugacy limit of $C$ is the {\em shadow} of the stabilizer of a {\em nonstandard triangle}.

%A projective triangle is a subset of $\R P ^2$ containing three non-collinear points and the lines between each pair of points.   A degenerate triangle is the limit of a projective triangle under a sequence of projective transformations.   We first state the main theorem, and then give the necessary background on the hyperreals, $\HR$. 
%We establish a 1-1 correspondence conjugates of 2-dimensional abelian subgroups of $SL_3 (\R)$ that maximally preserve equivalence classes of degenerate triangles.  Thus we will discuss conjugacy classes of 2-dimensional abelian groups and degenerate triangles interchangeably. 

% \begin{prop}\label{3 pts} The set of conjugates of the Cartan subgroup in $SL_n ( \R)$ is in 1-1 correspondence with the set of unordered $n$-tuples of points in $\R P^{n-1}$ in general position (projective n-simplices).  
% \end{prop} 
% \begin{proof} Map a Cartan subgroup to the $n$-tuple of points given by the set of simultaneous eigenvectors. 
 %\end{proof} 

 %Proposition \ref{3 pts} gives a correspondence between limits of points and limits of groups. 
 
 Let ${\mathcal Q}=\{C,F,N_1,N_2,N_3\}$ be the set consisting of the following 5 subgroups of  $SL_3(\R)$ with $a,b >0$, 
$$
\begin{array}{ccccc} 
C & F & N_1 & N_2 & N_3 \\
  \cart , &
\left(
\begin{array}{ccc}
a & t & 0 \\
0 & a & 0 \\
0 & 0 & \frac{1}{a^2} \end{array}
\right) , & 
\left(
\begin{array}{ccc}
1 & s & t \\
0 & 1 & s \\
0 & 0 & 1 \end{array}
\right), & 
 \left(
\begin{array}{ccc}
1 & s & t \\
0 & 1 & 0 \\
0 & 0 & 1 \end{array}
\right), &
\left(
 \begin{array}{ccc}
1 & 0 & t \\
0 & 1 & s \\
0 & 0 & 1 \end{array}
\right).
\end{array} $$ 
  Let $\Gamma$ be the directed graph shown below whose vertices are the elements of $\mathcal{Q}$.

\begin{center}
\begin{tikzpicture}
\node (C) {$C$};
\node[right of=C, node distance=2cm] (F) {$F$};
\node[right of=F, node distance=2cm] (N_1) {$N_1$};
\node[right of=N_1, node distance=2cm](D) {} ;
\node[above of=D, node distance=1cm] (N_2) {$N_2$};
\node[below of=D, node distance=1cm] (N_3) {$N_3$};

\path (C) edge[style={-stealth}] (F);
\path (F) edge[style={-stealth}] (N_1);
\path (N_1) edge[style={-stealth}] (N_2);
\path (N_1) edge[style={-stealth}] (N_3);
\end{tikzpicture}
\end{center}

\begin{theorem}  
\label{maindig}
\begin{enumerate}
\item Every subgroup of $SL_3(\R)$ isomorphic to $\R ^2$ is conjugate to exactly one element of ${\mathcal Q}$. 
\item If $G_1\ne G_2\in {\mathcal Q}$ then $G_2$ is a conjugacy limit of $G_1$ if and only if there is a directed path in $\Gamma$
from $G_1$ to $G_2$.
\item If $G$ is a conjugacy limit of $C$, then $G$ is a 1-parameter limit of $C$. Moreover every directed path in $\Gamma$ is a 1-parameter path of conjugacies. 
 \end{enumerate}

\end{theorem}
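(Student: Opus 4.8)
The plan is to prove the three parts in order, passing everywhere from the group to its Lie algebra and organizing the argument around a short list of conjugation invariants, most of which turn out to be stable under limits of $2$-planes in the Grassmannian $\mathrm{Gr}(2,\mathfrak{sl}_3(\R))$.

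For part (1) I would first note that a closed subgroup isomorphic to $\R^2$ is $\exp(\mathfrak a)$ for a $2$-dimensional abelian subalgebra $\mathfrak a\subset\mathfrak{sl}_3(\R)$, and that the isomorphism type $\R^2$ (rather than $\R\times S^1$ or a torus) forces $\mathfrak a$ to contain no element whose semisimple part has a complex-conjugate eigenvalue pair. So it suffices to classify abelian $2$-planes $\mathfrak a$ up to $\mathrm{Ad}(SL_3(\R))$ and discard the toral case. I would split on whether $\mathfrak a$ contains a \emph{regular} element $X$ (centralizer of the minimal dimension $2$): if so, then $\mathfrak a=\mathfrak z(X)$, and sorting $X$ by its real Jordan type yields exactly the regular semisimple element with distinct real eigenvalues (giving $C$), the regular semisimple element with a complex pair (the non-split Cartan, whose exponential is $\R\times S^1$ and is discarded), the element with eigenvalues $(\lambda,\lambda,\mu)$ carrying a single Jordan block (giving $F$), and the regular nilpotent (giving $N_1$). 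If $\mathfrak a$ has no regular element, it can contain no element with a non-central semisimple part, since such an element, analyzed inside its reductive centralizer $\mathfrak{s}(\mathfrak{gl}_2\oplus\mathfrak{gl}_1)$, would make $\mathfrak a$ either regular or toral; hence $\mathfrak a$ lies in the nilpotent cone, and the absence of a regular nilpotent makes every element square-zero, leaving only the algebra with a common image line ($N_2$) or the one with a common kernel plane ($N_3$). Pairwise non-conjugacy is then immediate from conjugation invariants: $\R$-diagonalizability singles out $C$; a non-central semisimple part forces $F$; nilpotency index $3$ versus $2$ separates $N_1$ from $\{N_2,N_3\}$; and a common image line versus a common kernel plane separates $N_2$ from $N_3$.

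For the non-existence half of part (2) I would use that a conjugacy limit of connected groups induces, after passing to a subsequence (by compactness of the Grassmannian), convergence of the Lie algebras $\mathrm{Ad}(P_n)\mathfrak h$ to the $2$-plane of the limit. Then for each ordered pair with no directed path in $\Gamma$ I would name a conjugation-invariant property of $\mathfrak a$ that is stable under such subspace limits and holds for the source but fails for the target. The relevant closed conditions are ``$\mathfrak a$ contains a nilpotent line'' and ``$\mathfrak a\subseteq\mathrm{Nilp}$'' (the projectivized nilpotent cone is closed), ``$X^2=0$ for all $X\in\mathfrak a$'' (polynomial, hence closed), and ``$\mathfrak a$ has a common image line'' or ``a common kernel plane'' (image lines and kernel planes vary continuously and their limits persist). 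These exclude $C$ as a limit of anything else ($\mathfrak c$ has no nilpotent line, every other algebra does), $F$ as a limit of any $N_i$ ($\mathfrak f\not\subseteq\mathrm{Nilp}$), $N_1$ as a limit of $N_2$ or $N_3$ (failure of $X^2=0$), and $N_2,N_3$ as limits of one another, matching $\Gamma$ exactly.

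For the existence half of part (2) and all of part (3) I would exhibit each edge by an explicit one-parameter conjugating family and check the convergence conditions (a),(b): conjugating $C$ by $\exp(tE_{12})$ while colliding the top two eigenvalues at rate $1/t$ gives $F$; a unipotent family colliding the remaining eigenvalue carries $F$ to $N_1$; and a diagonal one-parameter group, with the group parameter rescaled accordingly, carries $N_1$ to $N_2$ and to $N_3$. Since each conjugator lies in a one-parameter subgroup, a directed path is realized either by a single nonstandard conjugator (a nonstandard triangle, as in the configuration description of Theorem~\ref{maintri}) or by the product of these families reparametrized in a common variable, which is a one-parameter path of conjugacies; together with part (1) this shows every conjugacy limit of $C$ is already a one-parameter limit. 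I expect the main obstacle to be the completeness of part (1)---verifying that the no-regular-element case genuinely collapses into the square-zero cone and produces only $N_2,N_3$---together with the careful checking that the explicit sequences satisfy the full group-convergence conditions (a),(b), not merely convergence of the Lie algebras as $2$-planes.
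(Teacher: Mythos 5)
Your proposal is correct in outline but takes a genuinely different route from the paper in two of the three parts. For part (1) the paper simply cites Haettel and Suprenko--Tyshkevitch, whereas you reprove the classification of abelian $2$-planes in $\mathfrak{sl}_3(\R)$ from scratch (regular element $\Rightarrow$ $\mathfrak a=\mathfrak z(X)$, sorted by real Jordan type; no regular element $\Rightarrow$ square-zero cone $\Rightarrow$ common image line or common kernel plane); this buys self-containedness at the price of the case analysis you rightly flag as the delicate point. For the non-existence half of part (2) the paper uses a single uniform invariant: Proposition 3.2 of Cooper--Danciger--Wienhard, that $\dim N_G(\cdot)$ is monotone under conjugacy limits with equality only for conjugate groups, together with the computation $\dim N_G=2,3,4,5,5$ for $C,F,N_1,N_2,N_3$; this kills every backwards arrow at once and breaks the $N_2/N_3$ tie by non-conjugacy. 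You instead exhibit, for each forbidden pair, a closed conjugation-invariant condition on the $2$-plane in $\mathrm{Gr}(2,\mathfrak{sl}_3(\R))$ (meeting the nilpotent cone, lying in it, $X^2=0$, common image line vs.\ common kernel plane). Your list does cover exactly the complement of the directed paths, is more elementary (no normalizer computations, no appeal to the cited proposition), and explains \emph{why} each limit fails; the paper's argument is slicker and scales better. Two places where your write-up is thinner than it should be: the reduction from convergence of groups to convergence of their Lie algebras as points of the Grassmannian deserves a proof (it needs that the limit group is $2$-dimensional, which the paper gets from Cooper et al.\ Proposition 3.1), and realizing \emph{composite} directed paths such as $F\to N_2$ requires either transitivity of the conjugacy-limit relation or a direct family --- the paper handles $C\to N_2$ and $C\to N_3$ by writing down direct one-parameter conjugators in its Proposition \ref{1param}, and your edge-by-edge families match its explicit matrices essentially verbatim for part (3).
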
 
 
  % There are five distinct conjugacy classes of subgroups in $SL_3(\R)$ isomorphic to $\R ^2$:
%$$
%\begin{array}{ccccc} 
%C & F & N_1 & N_2 & N_3 \\
%  \cart , &
%\left(
%\begin{array}{ccc}
%a & t & 0 \\
%0 & a & 0 \\
%0 & 0 & \frac{1}{a^2} \end{array}
%\right) , & 
%\left(
%\begin{array}{ccc}
%1 & s & t \\
%0 & 1 & s \\
%0 & 0 & 1 \end{array}
%\right), & 
% \left(
%\begin{array}{ccc}
%1 & s & t \\
%0 & 1 & 0 \\
%0 & 0 & 1 \end{array}
%\right), &
%\left(
% \begin{array}{ccc}
%1 & 0 & t \\
%0 & 1 & s \\
%0 & 0 & 1 \end{array}
%\right)
%\end{array} $$ 
%where $a, b \in \R_{>0}$ and $s,t \in \R$.  

A {\em configuration} is a finite set $T$ each element of which is a point or projective line in ${\mathbb R}P^2$.
Define ${\mathcal TQ}=\{TC,TF,TN_1,TN_2,TN_3\}$ to be the set of 5 elements which are the configurations shown below.

%\begin{theorem}  
%\label{main}
%\begin{enumerate}

%\item Every subgroup of $SL_3(\R)$ isomorphic to $\R ^2$ is a \emph{1-parameter} limit of $C$.  There is a graph of limits, where an edge denotes that one group is the limit of another. 

%\begin{center}
%\begin{tikzpicture}
%\node (C) {$C$};
%\node[right of=C, node distance=2cm] (F) {$F$};
%\node[right of=F, node distance=2cm] (N_1) {$N_1$};
%\node[right of=N_1, node distance=2cm](D) {} ;
%\node[above of=D, node distance=1cm] (N_2) {$N_2$};
%\node[below of=D, node distance=1cm] (N_3) {$N_3$};

%\path (C) edge[style={-stealth}] (F);
%\path (F) edge[style={-stealth}] (N_1);
%\path (N_1) edge[style={-stealth}] (N_2);
%\path (N_1) edge[style={-stealth}] (N_3);
%\end{tikzpicture}
%\end{center}

%\item 
%There is  a bijection from conjugacy limits of $C$ to {\em characteristic triangle classes}.   
% \end{enumerate} 

%\end{theorem}  

\begin{figure} [H]
 \begin{center}
%\psfrag{0}{$0$}
%\psfrag{1}{$1$}
%\psfrag{2}{$2$}
%\psfrag{3}{$3$}
%\psfrag{4}{$4$)}
%\psfrag{3'}{$3'$}
%\psfrag{4'}{$4'$} 
\includegraphics[scale=.28]{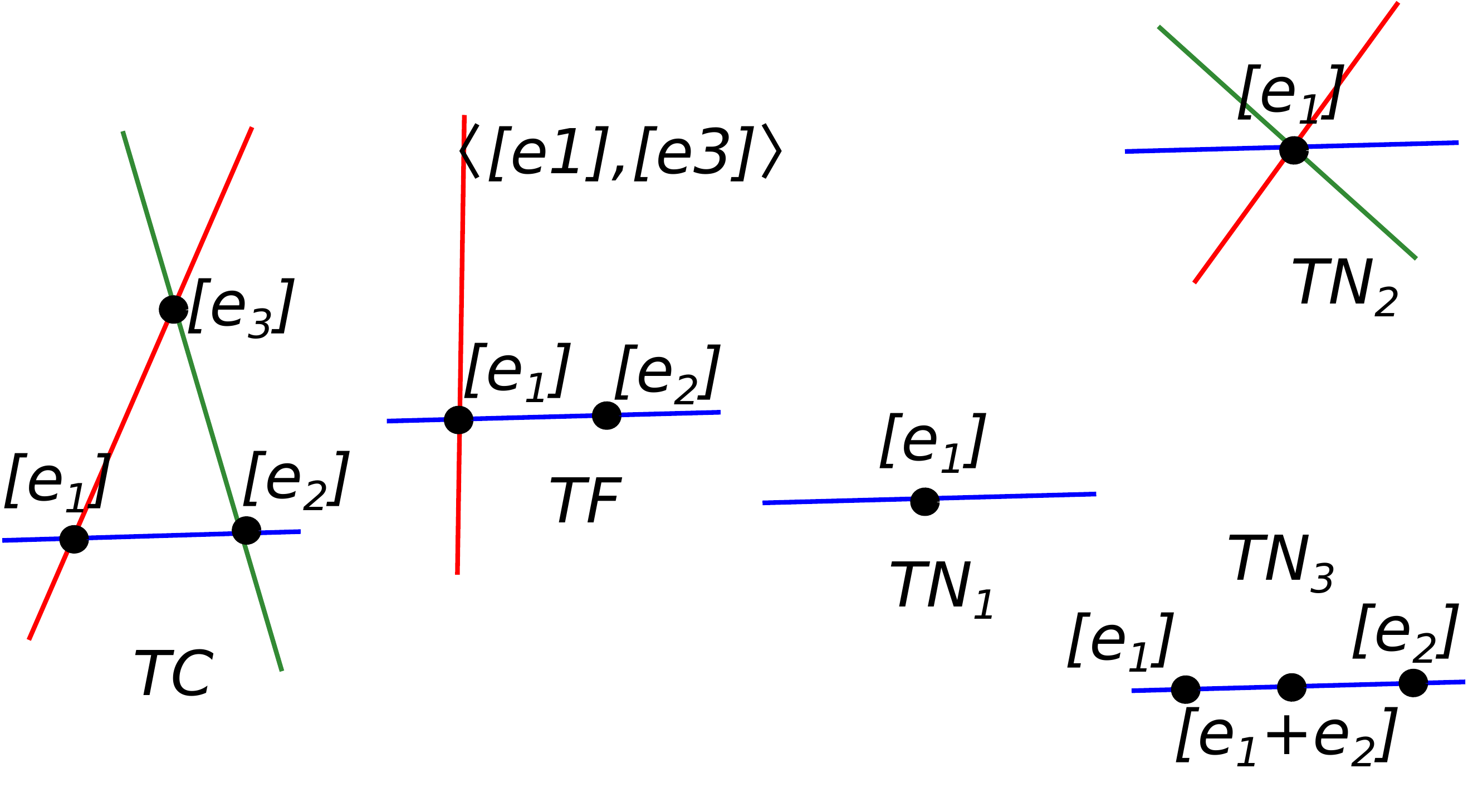}
 \caption{The 5 configurations in $\mathcal{TQ}$} \label{DegenTri}
 \end{center}
 \end{figure}
 
  There is a partial order on ${\mathcal TQ}$ given by inclusion.
 
 \begin{theorem}\label{maintri} There is a bijection $\theta:{\mathcal Q}\longrightarrow {\mathcal TQ}$ defined by
 $\theta(G)=T$ if and only if
\begin{enumerate}
\item $G$ preseverves each element of $T$
\item $T$ is maximal (in the partial order) subject to this condition
\end{enumerate}
\end{theorem}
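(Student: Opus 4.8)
The plan is to establish the bijection $\theta$ by explicitly computing, for each of the five groups $G \in \mathcal{Q}$, the set of points and projective lines in $\mathbb{R}P^2$ that $G$ preserves, and then checking that the maximal such configuration is exactly the corresponding element of $\mathcal{TQ}$. Since each $G$ acts on $\mathbb{R}P^2$ by the projective action induced from the matrix action on $\mathbb{R}^3$, a point $[v] \in \mathbb{R}P^2$ is preserved by $G$ if and only if $v$ spans a $G$-invariant line in $\mathbb{R}^3$, i.e. $v$ is a common eigenvector for every element of $G$; dually, a projective line is preserved if and only if the corresponding functional spans a common eigenvector of the transpose action. So the whole problem reduces to a linear-algebra computation of common eigenspaces (in $\mathbb{R}^3$ and its dual) for each of the five explicit matrix groups.

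First I would fix the standard flag notation: let $e_1, e_2, e_3$ be the standard basis, write $p_i = [e_i]$ for the coordinate points and $\ell_i$ for the coordinate line dual to $e_i^*$ (so $\ell_3 = \{x_3 = 0\}$, etc.). Then I would go through the list. For $C$ (the full diagonal torus), every element is diagonal, so the only common eigenvectors are $e_1, e_2, e_3$ and their duals; hence $C$ preserves exactly the three vertices $p_1, p_2, p_3$ and the three sides $\ell_1, \ell_2, \ell_3$ of the coordinate triangle, giving $TC$. For the remaining groups I would read off the invariant flags directly from the block structure of the matrices: $F$ stabilizes the flag $p_1 \in \ell_3$ together with the point $p_3$ and line $\ell_1$; each $N_i$, being unipotent with a single Jordan-type block pattern, fixes the vectors and functionals killed by the nilpotent parts, yielding progressively smaller invariant configurations as one passes along the directed graph $\Gamma$. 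Carrying this out produces, in each case, a candidate configuration $T$, and comparing with Figure \ref{DegenTri} confirms $T$ equals the intended element of $\mathcal{TQ}$.

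The two things that make this a genuine proof rather than a table are the two defining clauses of $\theta$. For clause (1) I must verify that $G$ really does preserve \emph{every} element of the claimed $T$ — this is immediate once the common eigenspaces are identified. For clause (2), maximality, I must show there is no further point or line fixed by $G$ that I have omitted; equivalently, that the common eigenspaces of $G$ (and of its dual action) are exactly those already listed, with no accidental extra invariant subspace. This is where the unipotent cases require care: for a nilpotent matrix the fixed points form the kernel, and I need the kernels of the relevant nilpotent parts to be one-dimensional (so they contribute a single fixed point or line), which follows from inspecting the rank of each off-diagonal pattern. I would also check that distinct $G$ yield distinct $T$, so that $\theta$ is injective; since $|\mathcal{Q}| = |\mathcal{TQ}| = 5$, injectivity together with well-definedness gives the bijection.

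I expect the main obstacle to be the maximality clause, and specifically the degenerate group $N_1$, whose generic element is a full upper-triangular unipotent with a $2$-dimensional Jordan block: here the common fixed locus can collapse and one must argue carefully that the only invariant point is $p_1$ and the only invariant line is $\ell_3$, with no extra invariant object arising from coincidences among the parameters $s, t$. The cleanest way to handle this uniformly is to note that an object is $G$-invariant iff it is invariant under every one-parameter subgroup, hence annihilated by the Lie algebra $\mathfrak{g}$ acting on $\mathbb{R}^3$ (resp. its dual); computing the common kernel of $\mathfrak{g}$ and of $\mathfrak{g}^{*}$ then pins down the invariant points and lines exactly, and maximality is the statement that this common kernel is no larger than the listed configuration. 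Once the five Lie-algebra kernel computations are done, assembling them into the bijection and matching against Figure \ref{DegenTri} is routine.
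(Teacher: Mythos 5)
Your overall strategy---explicit computation of the invariant points and lines of each of the five matrix groups, via common eigenvectors of $G$ and of its dual action (equivalently, common kernels of $\mathfrak g$ and $\mathfrak g^*$)---is exactly the verification the paper leaves implicit when it says the claim ``is easy to check,'' so in spirit you are reproving the theorem the same way, just with the linear algebra written out. However, your stated criterion for maximality is wrong for two of the five cases, and those are precisely the delicate ones. You assert that maximality amounts to showing ``the common kernel is no larger than the listed configuration.'' For $N_2$ the common kernel of the dual action is the two-dimensional space $\{\phi:\phi_1=0\}$, so $N_2$ preserves \emph{every} line through its unique fixed point---infinitely many lines, strictly more than the three appearing in $TN_2$. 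Dually, $N_3$ fixes every point on its invariant line. The paper states this explicitly (``the maximal configuration for $N_2$ consists of every line through a point, and the maximal configuration for $N_3$ is every point on a line''). So the test you propose would report that $TN_2$ and $TN_3$ are \emph{not} maximal, and your argument breaks at exactly the step you flagged as the crux.

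The repair is to remember what the partial order is on: $T$ ranges over degenerate triangle configurations, which by definition are limits of triangle configurations and hence contain at most three points and three lines. Maximality in the theorem is maximality within that finite-complexity class, not within the full invariant locus. For $N_2$ the correct argument is: the fixed-point set is a single point and the invariant lines form the full pencil through it; the largest degenerate triangle configuration contained in this locus therefore has one point and three lines, which is $TN_2$ (and one should note that any three lines of the pencil give projectively equivalent configurations, so the representative in $\mathcal{TQ}$ is well defined up to the equivalence the paper works with). The analogous dual statement handles $N_3$. For $C$, $F$, and $N_1$ your criterion is fine as stated, since there the invariant loci really are finite and coincide with the listed configurations. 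With that one correction, and your injectivity count $|\mathcal Q|=|\mathcal{TQ}|=5$, the proof goes through.
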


The {\em hyperreals}  $\HR$ are an ordered field that contains ${\mathbb R}$. (See section \ref{HR}.) A {\em nonstandard triangle}
is a subset $\NS{T} \subset \HR P^2$ consisting of three points $p,q,x \in \HR P^2$ in general position, and the three lines between $p,q$ and $x$.
Let ${\NS G}={\NS G}(p,q,x)\leq SL_3(\HR)$ be the conjugate of the positive diagonal subgroup  which fixes $p,q$, and $x$ .
The {\em shadow} of ${\NS G}$ is the subgroup $sh({\NS G}) \leq SL_3({\mathbb R})$ consisting of those matrices whose entries differ by
an infinitesimal from some matrix in ${\NS G}$.

The next theorem determines exactly when a sequence $P_n \in SL_3(\R)$ has the property $P_n C P_n^{-1}$ converges, and if it does, the conjugacy class of the limit under \emph{any} sequence of matrices.   See section \ref{inf_tri} for the definition of $\alpha$.

\begin{theorem}\label{mainNS} %A nonstandard triangle in $\HR P^2$ determines a limit group.
 % Conjugacy limits of $C$ are in bijection with \emph{equivalence classes of nonstandard triangles}.   Let $\theta$ be the largest infinitesimal angle, and $h$ the longest side:
% Given a nonstandard triangle $\NS T$, let $h$ be the length of the longest side and let $\theta$ be the largest infinitesimal angle in $\NS T$. Set $\theta=1$ 
% if no angle in $\NS T$ is infinitesimal.  Then $sh({\NS G}(\NS T))$
%is conjugate to a group in $\mathcal{Q}$, determined by the following table.
Let $\NS T$ be a nonstandard triangle. Then $sh({\NS G}(p,q,x))$
is conjugate to a group in $\mathcal{Q}$, determined by the following table, where $\alpha$ is a particular hyperreal that depends on $\NS T$.   
$$\begin{array}{|c|c|c||c|}
\hline
 \textrm{ \# infinitesimal angles} & \textrm{\# infinitesimal sides} & \alpha  & \textrm{nonstandard triangle} \\
 \hline
 0& 0 & \textrm{appreciable} & C\\
 1& 1 & \textrm{finite} & F \\
 2 &3 & \textrm{finite} & N_1\\
  \ast & 3 & \textrm{infinitesimal} & N_2 \\
 2 &\ast &\textrm{infinite}& N_3 \\ \hline
\end{array} $$

\end{theorem}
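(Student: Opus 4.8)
The plan is to split the statement into two tasks: showing that $sh(\NS G)$ is conjugate to \emph{some} element of $\mathcal{Q}$, and then deciding \emph{which} from the table. The first task I expect to be essentially automatic. Choose homogeneous coordinate vectors for $p,q,x$ and let $\NS P\in SL_3(\HR)$ be the matrix whose columns represent them (rescaling to $\det=1$ does not affect the conjugation, so this is harmless), so that $\NS G=\NS P\,{}^{*}C\,\NS P^{-1}$ is the stabilizer of the three vertices. Representing the hyperreal entries of $\NS P$ by sequences in the ultrapower, $\NS G$ is realized by a sequence $P_nCP_n^{-1}$ of conjugates of $C$, and by the correspondence between shadows and limits set up in Section~\ref{HR} the group $sh(\NS G)$ is the limit of this sequence. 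Hence $sh(\NS G)$ is a conjugacy limit of $C$, and Theorem~\ref{maindig}(1) already forces it to be conjugate to exactly one of $C,F,N_1,N_2,N_3$; the explicit shadow computations below will both reprove this and identify the group.

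To make the identification I would reduce to a normal form. The two integer counts in the table (infinitesimal sides and infinitesimal angles, interpreted respectively as infinitesimal coincidences among the vertices in $\HR P^2$ and among the sides as points of the dual plane) are preserved when $\NS G$ is conjugated by a \emph{standard} $g\in SL_3(\R)$, since such $g$ carry infinitesimally close points to infinitesimally close points and infinitesimally close lines to infinitesimally close lines, while $g\NS G g^{-1}$ has shadow $g\,sh(\NS G)\,g^{-1}$. From the definition of $\alpha$ in Section~\ref{inf_tri} I would check that its magnitude class (appreciable, finite, infinitesimal, or infinite) is likewise invariant. Using this freedom I move the standard part of the configuration into a convenient position, fixing a vertex at $[0:0:1]$ and a side along $\{z=0\}$ as available, so that each row of the table is described by one or two explicit infinitesimals together with the ratio recorded by $\alpha$.

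I would then carry out the case analysis dictated by Theorem~\ref{mainNS}. In each case I write $\NS G$ as the family $\NS P\,\mathrm{diag}(a,b,(ab)^{-1})\,\NS P^{-1}$, determine for which $a,b$ the entries are finite, take standard parts, and read off the group. The generic row gives $C$ outright. For the single infinitesimal side I would place $p=[1:0:0]$, $q=[1:\epsilon:0]$ with $\epsilon$ infinitesimal and $x=[0:0:1]$; then the $(1,2)$-entry of the conjugate is $(b-a)/\epsilon$, finiteness forces $b=a+O(\epsilon)$, and the standard part is the Jordan-block family $F$ with off-diagonal parameter $\mathrm{sh}((b-a)/\epsilon)$, matching the ``finite $\alpha$'' entry. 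The fully collapsed configurations (three infinitesimal sides) yield the unipotent groups $N_1$ or $N_2$ according as $\alpha$ is finite or infinitesimal, while the configuration with $\alpha$ infinite yields $N_3$; in each case I verify that the surviving finite matrices fill out exactly the stated unipotent subgroup of $SL_3(\R)$, consistently with the configurations of Figure~\ref{DegenTri} and Theorem~\ref{maintri}.

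The main obstacle is the bookkeeping of orders of magnitude in the collapsed cases, where several entries of $\NS P$ and $\NS P^{-1}$ blow up or vanish at competing rates. The delicate point is to show the shadow is \emph{exactly} the target group, neither larger nor smaller: one must exhibit every element of the target as a standard part by choosing $a,b$ with the correct infinitesimal corrections, and simultaneously show that no matrix outside it survives because the remaining entries are forced to be infinite unless $a,b$ are suitably constrained. This is precisely where the magnitude class of $\alpha$ does the work, since it measures the relative rates at which the vertices and the sides degenerate and thereby selects among $N_1,N_2,N_3$ even when the combinatorial counts coincide. Pinning down $\alpha$ from Section~\ref{inf_tri} as the correct controlling ratio, and matching each of its three magnitude classes to the right row, is the crux of the argument.
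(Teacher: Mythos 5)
Your proposal is correct in strategy, but it takes a genuinely different route from the paper for the identification step. The paper does not compute the shadow of $\NS P\,\NS C\,\NS P^{-1}$ entry by entry. Instead it builds the triangle inductively from a nonstandard $1$-simplex: the group $\NS G(\delta)\leq SL_2(\HR)$ preserving $p,q$ on a line $\NS H$ is analyzed first (Corollary \ref{2pt}), and the third vertex $x$ is controlled through the induced action on its link $\NS L(x)$. Lemmas \ref{link1}--\ref{link} show that $\textrm{Fin}(\NS G)$ moves points of $\textrm{Gal}_\eta(y)$ a distance of order $\varepsilon\delta$, hence moves lines in $\NS L(x)$ an angle of order $\varepsilon\delta/\eta$; this is where $\alpha=\varepsilon\delta/\eta$ comes from and why its magnitude class (finite / infinitesimal / infinite) is the right invariant. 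Proposition \ref{2to3} then identifies $\hat G$ in each case not by normal-form matrices but by qualitative invariants -- number of fixed points and invariant lines of the shadow configuration, whether the action on each invariant line is hyperbolic, parabolic, or trivial, and whether the action on $L(x)$ is trivial -- together with the fact that each element of $\mathcal Q$ is uniquely characterized by these features (e.g.\ $N_3$ is the only group acting as the identity on a line). Your direct computation with $\NS P=\bigl(\begin{smallmatrix}1&1&1\\0&\delta&\varepsilon\\0&0&\eta\end{smallmatrix}\bigr)$ is essentially what the paper relegates to equation (\ref{NSP}), Lemma \ref{unip} (``all weights are $1$''), and Corollary \ref{coords}; it is workable, and it buys a very concrete description of which $(a,b)$ survive and what the limiting parameters are, at the cost of exactly the order-of-magnitude bookkeeping you flag as the crux (three competing entries $(b-a)/\delta$, $(c-b)\varepsilon/\eta$, and $((a-b)\varepsilon+(c-a)\delta)/\delta\eta$, whose relative orders must be tracked to decide which linear relation holds between the shadows of the $(1,2)$- and $(2,3)$-entries, and hence which of $N_1,N_2,N_3$ appears). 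The paper's link argument packages precisely that bookkeeping into one reusable lemma, which is also what generalizes to higher-dimensional simplices in the last section. Two small cautions on your first paragraph: a representative sequence $P_nCP_n^{-1}$ of $\NS G$ need not converge, only subconverge, so you should either pass to a subsequence or argue as in Theorem \ref{HRlimgp} that $sh(\textrm{Fin}(\NS G))\cong\R^2$ for dimension reasons and then apply Theorem \ref{maindig}(1); and your invariance claim for the counts and for the magnitude class of $\alpha$ under standard conjugation does hold, but only because a standard projective transformation is bi-Lipschitz with standard constants, which is worth saying explicitly since $\delta,\varepsilon,\eta$ are defined via the (non-invariant) spherical metric.
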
 

This result is used to determine %the conjugacy limit of a sequence of conjugates of the
conjugacy limits of the
 positive diagonal group $C \leq SL_3({\mathbb R})$ as follows. The columns of a matrix 
 in $ SL_3({\mathbb R})$ determine a triangle in ${\mathbb R}P^2$. 
 Given a sequence of conjugating matrices $P_n$ we choose a subsequence so
the corresponding triangles $P_n(TC)$ converge to  a nonstandard triangle $\NS T \subset\HR P^2$.
Theorem \ref{mainNS} determines the conjugacy class of the limit $P_n C P_n^{-1}$. %The quantities $h$ and

Most of the work in this paper follows Haettel, \cite{Haettel}, which classifies the homogeneous space of diagonal Cartan subgroup of a group $G$ with the Chabauty topology.   Haettel determines the compactification of the set of all closed abelian subalgebras of dimension the real rank of  $\mathfrak{G}=\mathfrak{sl}_n( \R)$, and shows it is equal to the compactification of the space of limits of Cartan subalgebras for $n=3,4$.  When $n=3$, Haettel shows this space is a simply connected cell complex.  The cells corresponds to the strata in the digraph $\Gamma$. The dimension of the cell decreases as the dimension of the normalizer of the group increases.  To find limits of Lie algebras, Haettel works with sequences under the adjoint action.   
 This paper follows his work, but from the perspective of Lie groups, and introduces the geometric notions of characteristic degenerate triangles, nonstandard triangles, and a maximal configuration preserved by a group.   
 
 Sections \ref{DTC} and \ref{Dig} prove theorems \ref{maintri} and \ref{maindig}, respectively. The second part of the paper is dedicated to the hyperreal theorem \ref{mainNS}.  We first give some background on hyperreal numbers, and explain the lower dimensional case, $SL_2(\R)$.  We build the theory of projective geometry over the hyperreals, to prove theorem \ref{mainNS} in proposition \ref{2to3}. %and show every configuration in $\mathcal{TQ}$ is the shadow of a nonstandard triangle. 
 
 % The first part of the paper is focused on proving theorem \ref{main} from the standard (real) perspective.   The second part of the paper classifies the conjugacy limit of $C$ under \emph{any} sequence of matrices using the hyperreal numbers. 

% Each limit group is a limit in the standard sense under the path given in proposition \ref{1param}. 

\section{Degenerate Triangle Configurations: Proof of Theorem \ref{maintri} }\label{DTC}%%%%%%%%%%%%%%%%%%%Degenerate Triangle Configurations
 
This section proves theorem \ref{maintri}.  We derive the 5 configurations in $\mathcal{TQ}$, and explain how each configuration in $\mathcal{TQ}$ determines a conjugacy limit group in $\mathcal{Q}$. 

Recall a \emph{configuration} is a set with elements which are points and lines in $\R P^2$. 
A configuration, $T$, is a \emph{limit} of a configuration, $S$, if there is a sequence of projective transformations, $P_n$, such that for every $x \in T$, and $n$ sufficiently large, then $x \in P_n S$.  (See \cite{Cooper} definition 2.6). %such that  every element of $T$ is the limit under $P_n$ of an element of $S$, in the Hausdorff topology.  
 We write $P_n S \to T$. A \emph{projective triangle} consists of three points and three line segments connected in the usual way.  A \emph{triangle configuration} in $\R P^2$  is the configuration of three lines in general position, and their three intersection points, obtained by extending the lines of a projective triangle in the natural way.  A \emph{degenerate triangle configuration} is a configuration that is a limit of a triangle configuration. %under a sequence of projective transformations.  
It has at most three points and three lines.   We say two degenerate triangle configurations are in the same equivalence class if they have the same number of points and lines.   %For brevity, we drop the words `equivalence class', and refer to degenerate triangle configurations. 

\begin{figure} [H]
 \begin{center}
%\psfrag{0}{$0$}
%\psfrag{1}{$1$}
%\psfrag{2}{$2$}
%\psfrag{3}{$3$}
%\psfrag{4}{$4$)}
%\psfrag{3'}{$3'$}
%\psfrag{4'}{$4'$} 
\includegraphics[scale=.28]{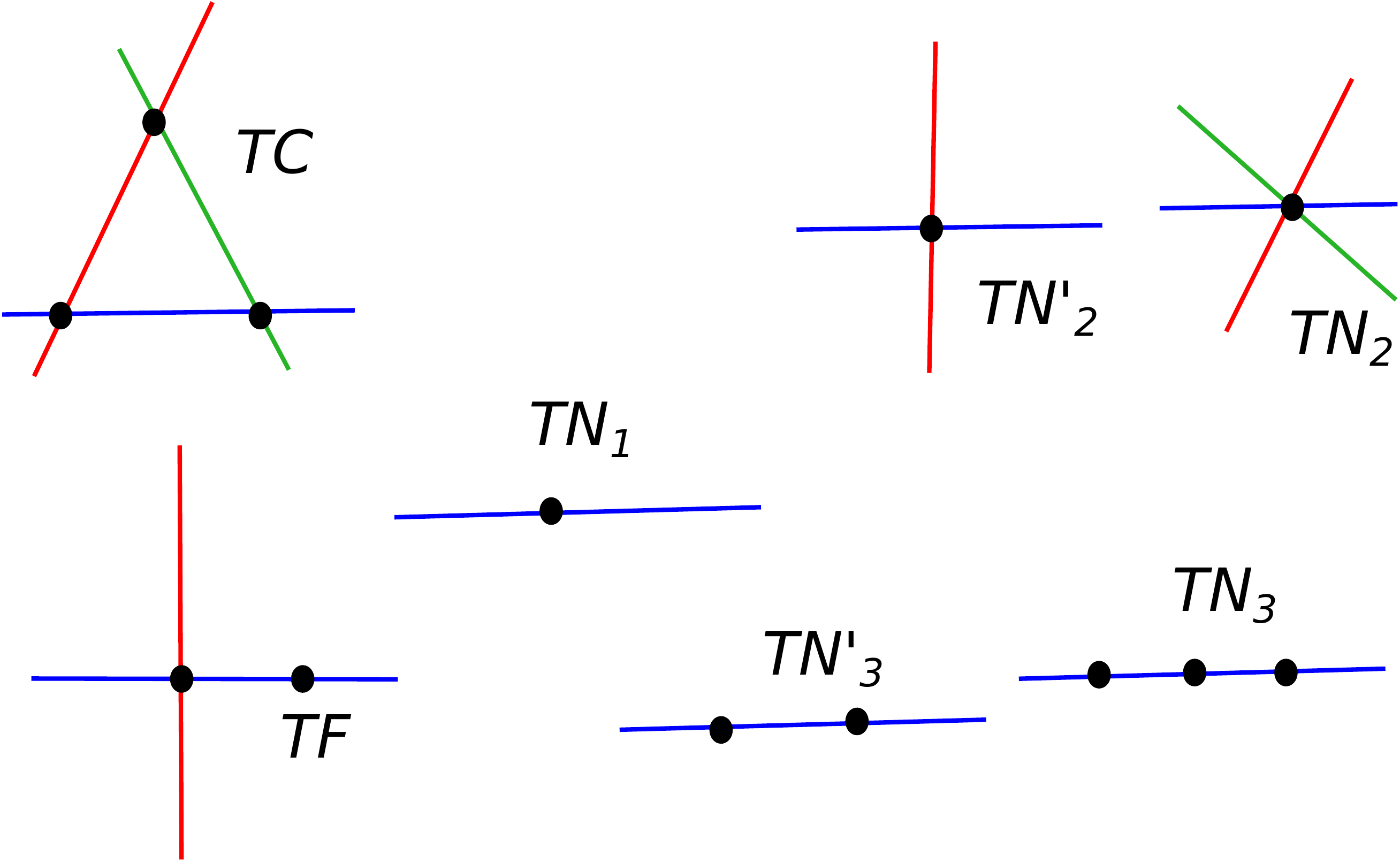}
 \caption{degenerate triangle configurations} \label{7DT}
 \end{center}
 \end{figure}

 Since it is not possible to make a degenerate triangle configuration with 3 lines and 2 points, or 2 lines and 3 points, figure \ref{7DT} shows a representative from every equivalence class of degenerate triangle configurations.  Paths of projective transformations from $TC$ to the degenerate triangle configurations in $\mathcal{TQ}$ are given in proposition \ref{1param}. 
 
 \begin{proposition} \label{pres} Let $C$ be the positive diagonal Cartan subgroup of $SL_3 ( \R)$,  and $S \subset \R P^2$, be the projective triangle configuration preserved by $C$.  Let $P_t \in PSL_3 ( \R)$ be a sequence of projective transformations, such that $S$ converges to  $\displaystyle S_{\infty} = \lim_{t \to \infty} P_t S$, a degenerate triangle configuration, and $G$ has conjugacy limit $\displaystyle G_{\infty} = \lim_{t \to \infty} P_t G P_t ^{-1}$. Then $G_{\infty}$ preserves $S_{\infty}$. 
\end{proposition}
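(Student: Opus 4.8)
The plan is to reduce the statement to a routine continuity argument, exploiting that the condition ``$g$ preserves a given point or line'' is closed under the joint continuity of the $PSL_3(\R)$-action on $\R P^2$ and on its dual. Throughout, $G=C$, which by hypothesis preserves the triangle configuration $S$: every $g\in C$ fixes each of the three vertices and preserves each of the three edge-lines of $S$. Conjugating this relation by $P_t$, every element of $P_t C P_t^{-1}$ preserves each element of the configuration $P_t S$; that is, if $x$ is a labeled point or line of $S$ and $h\in P_t C P_t^{-1}$, then $h\cdot(P_t x)=P_t x$.

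First I would fix an element $y\in S_\infty$ and an element $g_\infty\in G_\infty$, and produce two approximating sequences. Since $S_\infty=\lim_{t\to\infty}P_t S$ as configurations, I choose a labeled element $x$ of $S$ whose image $P_t x$ converges to $y$ as a point of $\R P^2$ (if $y$ is a point) or as a point of the dual projective plane, i.e.\ the space of projective lines (if $y$ is a line); such an $x$ exists by the definition of a limit of configurations, even though in the degenerate limit several elements of $S$ may collapse onto the single element $y$. Since $G_\infty=\lim_{t\to\infty}P_t C P_t^{-1}$ as groups, condition (a) in the definition of group convergence furnishes a sequence $g_t\in P_t C P_t^{-1}$ with $g_t\to g_\infty$.

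Next I would pass to the limit. By the first paragraph, $g_t\cdot(P_t x)=P_t x$ for every $t$. The action map $PSL_3(\R)\times\R P^2\to\R P^2$, given by matrix action followed by projectivization (and likewise the corresponding action on the dual projective plane for lines), is jointly continuous. Hence $g_t\cdot(P_t x)\to g_\infty\cdot y$, while at the same time $g_t\cdot(P_t x)=P_t x\to y$. By uniqueness of limits, $g_\infty\cdot y=y$, so $g_\infty$ fixes $y$ when $y$ is a point and preserves $y$ when $y$ is a line. As $g_\infty\in G_\infty$ and $y\in S_\infty$ were arbitrary, every element of $G_\infty$ preserves every element of $S_\infty$, that is, $G_\infty$ preserves $S_\infty$.

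The argument is short, and its only genuinely delicate point is the bookkeeping in the second paragraph: because $S_\infty$ is \emph{degenerate}, the correspondence between the elements of $S$ and those of $S_\infty$ is not a bijection, and one must verify that for each $y\in S_\infty$ there is at least one labeled element of $S$ whose $P_t$-orbit converges to $y$ and to which the fixed-element relation can be applied. I expect this to be the main thing to pin down, together with confirming that the convergence invoked — that of each labeled configuration element in the appropriate projective space — is precisely what the definition of configuration limit provides. The remaining passage to the limit is then immediate from joint continuity of the action, and notably uses only condition (a), not condition (b), of group convergence.
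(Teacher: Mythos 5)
Your proof is correct and follows essentially the same route as the paper's: both take a sequence $g_t \in P_t C P_t^{-1}$ converging to the given limit element (condition (a) of group convergence) and a labeled $x_0 \in S$ with $P_t x_0$ converging to the given element of $S_\infty$, then pass to the limit using continuity of the action. The only difference is presentational: the paper argues by contradiction via the distance $d(g x, S_\infty)$ and concludes setwise invariance, whereas you argue directly and obtain the slightly stronger elementwise conclusion $g_\infty \cdot y = y$, which is in fact the form of ``preserves'' used later in the paper.
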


\begin{proof} Let $G_t := P_t C P_t^{-1}$, so that $G_t$ preserves $S_t:= P_t S$ for all $t$.  Suppose for contradiction that $G_{\infty}$ does not preserve $S_{\infty}$.  Then there is some $ x \in S_{\infty}$, and $g \in G_{\infty}$ such that $g x  \not \in S_{\infty}$, or $d( g x , S_{\infty}) > 0$.  Take a sequence $g_t \in G_t$ such that $\displaystyle \lim_{t \to \infty } g_t = g$. Pick a point $x_0 \in S$ so that $\displaystyle \lim_{t \to \infty } P_t x_0 = x$, so $P_t x_0 \in S_t$. %thus $y \in S_t$ for some $t$.  
But then $\displaystyle \lim _{ t \to \infty } d ( g_t (P_t x_0)  , S_t ) = d ( g x  , S _ { \infty} ) > 0$, contrary to $d ( g_t (P_t x_0) , S_t ) =0$.  \qed
\end{proof}

The set of degenerate triangle configurations is partially ordered by inclusion.    Given a group, $G \leq SL_3(\R)$, a  degenerate triangle configuration, $T$, is \emph{characteristic} for $G$, if $G$ maps each point or line of $T$ to itself, and $T$ is maximal subject to the partial order.   %The characteristic degenerate triangle configurations in figure \ref{DegenTri} are the limits of a projective triangle under the sequences of transformations in proposition \ref{1param}.  
 There are degenerate triangle configurations which are not characteristic for any group in $\mathcal{Q}$.  They are shown in figure \ref{7DT}: $TN'_2$ and $TN'_3$.

   %For example, the configuration of one point at the intersection of two lines. 
%Recall that a group action on a projective line has either two fixed points (hyperbolic), one fixed point (parabolic), or fixes every point (identity).  Any group which preserves the configuration of one point at the intersection of two lines also preserves more lines through the point (if the group action on both lines is parabolic), or another point on one of the lines (if the group action on a line is hyperbolic).  Therefore this configuration is not characteristic for any group. %Similarly, there is a degenerate triangle configuration which consists of one line with two points.  
%Similarly any group which preserves the configuration of two points on a line also preserves more points on the line (if the group acts trivially on the line), or an additional line through one of the points (if the group action on the line is hyperbolic).  

\begin{proof}\textit{of Theorem \ref{maintri}:} %Let $G \in \mathcal{Q}$ be a conjugacy limit group.  Then $\theta(G) =T \in \mathcal{TQ}$ is a characteristic triangle for $G$, so conditions 1 and 2 are satisfied.  
The notation has been chosen so that for every $G \in \mathcal{Q}$, it is easy to check $\theta(G) =TG \in \mathcal{TQ}$ is a characteristic degenerate triangle configuration for $G$.    That is, $G$ preserves every point or line of $\theta(G)$, and $\theta(G)$ is maximal in the partial order.
\qed
\end{proof} 

 %We call an equivalence class of characteristic degenerate triangle configurations a \emph{characteristic triangle class}.  Figure \ref{DegenTri} shows the 5 characteristic triangle classes.
Given a group $G \leq SL_3(\R)$, a \emph{characteristic triangle class}, $[T]$ for $G$, is an equivalence class of degenerate triangle configurations such that each $S \in [T]$ is characteristic for a group conjugate to $G$. Figure \ref{DegenTri} shows the set $\mathcal{TQ}$, which consists of one representative of each characteristic triangle class.  %for every conjugacy limit group in $\mathcal{Q}$.   %The elements of $\mathcal{TQ}$ (figure \ref{DegenTri}) are limits of $TC$ under the paths given for the conjugacy limit groups in proposition \ref{1param}.
By theorem \ref{maindig}, $\mathcal{Q}$ contains one representative of each conjugacy class of conjugacy limit group of $C$. 
Therefore, $\theta$ induces a bijection from conjugacy classes of conjugacy limit groups of $C$ to characteristic triangle classes.  This map is well defined: If $G_1$ and $G_2$ are conjugacy limits of $C$, and $T_1, T_2$ are their respective characteristic degenerate triangle configurations, and $P \in GL_3(\R)$, then $PG_1 P^{-1}= G_2$ if and only if $PT_1 = T_2$.

Given a group $G \leq SL_3(\R)$, the \emph{maximal configuration} preserved by $G$ is the set whose elements are the points fixed by $G$ and the lines preserved by $G$.  It might be infinite.  For example, the maximal configuration for $N_2$ consists of every line through a point, and the maximal configuration for $N_3$ is every point on a line. 

% We say a configuration, $T$, is \emph{maximal} for a group, $G \leq SL_3(\R)$,  if $G$ acts on $\R P ^2$ so that each element of $T$ is mapped to itself, and if $G$ preserves a point or line, then this point or line is in $T$. Notice that a maximal configuration may contain infinitely many points or lines.  

In configuration $TN_3$, three points on the line are fixed, and any group which acts on $\R P^2$ and fixes three points on a projective line must fix every point on the line.  In configuration $TN_2$, three lines in the link of the vertex are preserved, and similarly, every line in the link is preserved.  %Under this interpretation, figure \ref{DegenTri} shows the %triangle configuration which is maximal 
%maximal configuration
%for each group.  
In this way, each configuration in figure \ref{DegenTri} determines a maximal configuration. These ideas will be investigated further with the hyperreal viewpoint.

\section{The Digraph of Conjugacy Limit Groups: Proof of Theorem \ref{maindig}}\label{Dig}%%%%%%%%%%%%%%%%%%%%%%%

In this section we prove theorem \ref{maindig}. %We say two representations $\rho_1 , \rho_2 \in \textrm{Hom}( \R^2, SL_3 (\R))$ are equivalent if $\rho_1 = \tau \rho_2 \phi$, where $\phi \in \textrm{Aut} (\R^2)$ and $\tau \in \textrm{Inn} (SL_3(\R))$. 
Part 1 follows easily from work of Haettel \cite{Haettel}, or the classification over $\mathbb{C}$ of Suprenko and Tyshkevitch, \cite{SupTysh}, p.134.   Next we prove part 3: every element of $\mathcal{Q}$ is a conjugacy limit of $C$. 
If each element of the sequence of conjugating matrices lies in a one parameter subgroup of $SL_n(\R)$, then $G$ is a conjugacy limit \emph{under a 1-parameter path of conjugacies}.  
%We give a 1-parameter path to each conjugacy limit in part 2 of theorem \ref{main}. 

\begin{proposition}\label{1param}
Each conjugacy limit group is a limit under a 1-parameter path of conjugacies. 
\end{proposition}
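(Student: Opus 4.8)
The plan is to prove the proposition by exhibiting, for each of the five representatives in $\mathcal{Q}$, an explicit one-parameter subgroup of $SL_3(\R)$ whose conjugation action carries $C$ to that group in the limit, and then checking the two convergence conditions (a) and (b). By Theorem \ref{maindig}(1)--(2) it suffices to treat the five groups $C,F,N_1,N_2,N_3$ themselves; the case $G=C$ is trivial. For the remaining four I take $P_t=\exp(tX)$, where $E_{ij}$ denotes the elementary matrix with a single $1$ in position $(i,j)$ and
$$ X_F = E_{12}, \quad X_{N_1} = E_{12}+E_{23}, \quad X_{N_2} = E_{12}+E_{13}, \quad X_{N_3} = E_{13}+E_{23}. $$
Each $X$ is nilpotent, so $P_t=I+tX+\tfrac{t^2}{2}X^2$ is a genuine one-parameter unipotent subgroup of $SL_3(\R)$ (the $t^2$ term being present only for $X_{N_1}$, where $X^2=E_{13}$), and I let $t\to\infty$.

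The first step is a direct computation of $P_t\,\mathrm{diag}(a,b,c)\,P_t^{-1}$ for a general element of $C$ (with $abc=1$ and $a,b,c>0$). In each case the conjugate is again upper triangular with diagonal $(a,b,c)$ and with off-diagonal entries equal to fixed multiples of $t$ (and, for $X_{N_1}$, of $t^2$) times differences of the eigenvalues. To verify condition (a) I reparametrize: given a target element I seek $a_t,b_t,c_t$ of the form $1+O(1/t)$ (respectively $\to a$ for $F$), subject to $a_tb_tc_t=1$, so that the off-diagonal entries converge to the prescribed values. This reduces to solving a small invertible linear system in the leading $1/t$-coefficients, so every element of the target group is attained as a limit.

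Condition (b), that the limit group is no \emph{larger} than the claimed target, is where the main work lies. Suppose $P_t\,\mathrm{diag}(a_t,b_t,c_t)\,P_t^{-1}$ converges along a subsequence. Since each off-diagonal entry is $t$ (or $t^2$) times a difference of diagonal entries and $t\to\infty$, boundedness of these entries forces the diagonal entries to coalesce; together with $a_tb_tc_t=1$ and positivity this forces the limiting diagonal into the form $(1,1,1)$ for $N_1,N_2,N_3$ and $(\alpha,\alpha,\alpha^{-2})$ for $F$. The delicate point, and what I expect to be the principal obstacle, is the group $N_1$: a priori the limiting $(1,2)$ and $(2,3)$ entries $\lim t(b_t-a_t)$ and $\lim t(c_t-b_t)$ could differ, which would place the limit in the strictly larger Heisenberg group rather than in $N_1$. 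This is resolved by combining the determinant constraint with convergence of the $(1,3)$ entry $\tfrac{t^2}{2}(a_t-2b_t+c_t)$: writing $a_t=1+\phi_t$, $b_t=1+\psi_t$, $c_t=1+\omega_t$, the relation $\phi_t+\psi_t+\omega_t=O(\phi_t\psi_t+\cdots)$ together with boundedness of $t^2\bigl(3\psi_t-(\phi_t+\psi_t+\omega_t)\bigr)$ forces $t(b_t-a_t)$ and $t(c_t-b_t)$ to share a common limit, so the two entries are equal and the limit lies in $N_1$. Combining (a) and (b) then yields equality of the limit group with the target in each case, which proves the proposition; the same elementary computations with $C$ replaced by $F$ or $N_1$ realize the individual edges of $\Gamma$ by one-parameter conjugacies, giving the directed-path statement of Theorem \ref{maindig}(3).
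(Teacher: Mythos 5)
Your proposal is correct and takes essentially the same approach as the paper: the one-parameter conjugating families $\exp(tX_F)$, $\exp(tX_{N_1})$, $\exp(tX_{N_2})$, $\exp(tX_{N_3})$ are exactly the four matrices the paper exhibits (the paper simply lists them and asserts convergence). Your additional verification of conditions (a) and (b) — in particular the check that for $N_1$ the limiting $(1,2)$ and $(2,3)$ entries must coincide, so the limit is not the larger Heisenberg group — is sound and fills in details the paper leaves to the reader.
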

\begin{proof}  Each conjugacy limit group is a limit under the 1 parameter path of conjugacies (as $n \to \infty $): 
$$
\begin{array}{cccc} 
C \to F&\qquad C \to  N_1 & \qquad C \to N_2 & \qquad C \to N_3 \\
\left(
\begin{array}{ccc}
1 & n & 0 \\
0 & 1 & 0 \\
0 & 0 & 1 \end{array}
\right) , 
& \qquad  \left(
\begin{array}{ccc}
1 & n & \frac{n^2}{2} \\
0 & 1 & n \\
0 & 0 & 1 \end{array}
\right),  
& \qquad  \left(
\begin{array}{ccc}
1 & n & n \\
0 & 1 & 0 \\
0 & 0 & 1 \end{array}
\right),
& \qquad \left(
 \begin{array}{ccc}
1 & 0 & n \\
0 & 1 & n \\
0 & 0 & 1 \end{array}
\right) . 
\end{array}  $$   \qed \end{proof}

To finish the proof of theorem \ref{maindig}. We must show:\\
 (1) there is a 1-parameter path of conjugacies for every arrow in the digraph, and \\
 (2) no other arrows are possible. \\
  The path $C \to F$ is given in proposition \ref{1param}.  For the rest of the arrows, use the paths: 
 %$$  
% \begin{array}{ccc} 
% F \to N_1  \qquad &   N_1 \to N_2  \qquad &  N_1 \to N_3 \\
%\left( \begin{array}{ccc} 
% n &0 & n^2\\
% 0&1&n\\
% 0&0&1 
% \end{array} \right)
% \qquad 
%&
% \left ( \begin{array}{ccc} 
% n&0 & 0\\
% 0 &1 &0 \\
% 0&0&n \end{array} \right)
% \qquad
%&
% \left( \begin{array}{ccc} 
% \frac{1}{n^2} &n & 0 \\
% 0 & \frac{1}{n} & n^2 \\
% 0 & 0 & \frac{1}{n^2} \end{array} \right).
% \end{array} 
% $$
  $$  
 \begin{array}{ccc} 
 F \to N_1  \qquad &   N_1 \to N_2  \qquad &  N_1 \to N_3 \\
\left( \begin{array}{ccc} 
 1 &n & \frac{n^2}{2}\\
 0&1&n\\
 0&0&1 
 \end{array} \right)
 \qquad 
&
 \left ( \begin{array}{ccc} 
 1&0 & 0\\
 0 &1 &0 \\
 0&0&\frac{1}{n} \end{array} \right)
 \qquad
&
 \left( \begin{array}{ccc} 
 \frac{1}{n} &0 & 0 \\
 0 & 1 & 0 \\
 0 & 0 & 1 \end{array} \right).
 \end{array} 
 $$
as $n \to \infty$. This shows every directed path in $\Gamma$ is realized by a 1-parameter limit.  To show these are all possible arrows, we use:

\begin{proposition}\label{normalizer}[\cite{Cooper}, proposition 3.2]  If $A,B \leq G=SL_n (\R)$ and $A$ is a conjugacy limit of $B$, then the dimension of the normalizer must increase: $\dim N_ G (A) \leq \dim N_G(B)$, with equality if and only if $A$ and $B$ are conjugate. 
\end{proposition}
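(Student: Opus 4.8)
The plan is to prove this cited fact by the standard orbit/stabilizer argument, translated into an upper/lower semicontinuity statement for dimensions. First I would realize the conjugation action concretely. Since the groups in question are connected (here $A,B\cong\R^2$), passing to Lie algebras is faithful, so I would send a closed subgroup $H$ to its subalgebra $\mathrm{Lie}(H)\in\mathrm{Gr}_k(\mathfrak{sl}_n(\R))$ with $k=\dim H$, a point of a Grassmannian on which $G=SL_n(\R)$ acts algebraically by $\mathrm{Ad}$. The stabilizer of $\mathrm{Lie}(B)$ under this action is the normalizer $N_G(\mathrm{Lie}(B))$, whose identity component coincides with that of $N_G(B)$, so $\dim(G\cdot B)=\dim G-\dim N_G(B)$, and likewise for $A$. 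I would also check that Chabauty convergence $P_nBP_n^{-1}\to A$ of the groups corresponds to convergence $\mathrm{Ad}(P_n)\,\mathrm{Lie}(B)\to\mathrm{Lie}(A)$ in the Grassmannian, so that $\mathrm{Lie}(A)$ lands in the closure $\overline{G\cdot B}$ of the $\mathrm{Ad}$-orbit of $\mathrm{Lie}(B)$.

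Next I would invoke the key geometric input: orbits of an algebraic group action are locally closed, and the boundary $\overline{G\cdot B}\setminus(G\cdot B)$ is a union of orbits of strictly smaller dimension; equivalently, $H\mapsto\dim N_G(H)$ is lower semicontinuous, so it can only jump upward at the limit. This gives $\dim(G\cdot A)\le\dim(G\cdot B)$, and hence, subtracting from $\dim G$, $\dim N_G(A)\ge\dim N_G(B)$. Writing $d(H)=\dim(G\cdot H)=\dim G-\dim N_G(H)$ for the orbit dimension, the content is the orbit-dimension drop $d(A)\le d(B)$; in normalizer terms this is precisely the statement that the dimension of the normalizer of the limit increases, i.e. the displayed inequality read with $A$ the (more degenerate) limit is $\dim N_G(A)\ge\dim N_G(B)$. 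This is the form that excludes the extra arrows of $\Gamma$: a putative limit lowering the normalizer dimension would contradict the orbit-dimension drop.

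For the equality clause I would argue that $\dim N_G(A)=\dim N_G(B)$ forces $d(A)=d(B)$, so $\mathrm{Lie}(A)$ lies in an orbit of the same dimension as $G\cdot B$ inside the irreducible variety $\overline{G\cdot B}$. A full-dimensional orbit in an irreducible orbit closure is dense and open there, and distinct orbits are disjoint, so $\mathrm{Lie}(A)\in G\cdot B$; thus $A$ and $B$ are conjugate. The converse is immediate, since conjugate subgroups have conjugate and hence equidimensional normalizers.

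The hard part will be making the orbit-closure dimension-drop rigorous over $\R$ rather than $\C$: real algebraic orbits are still locally closed and the boundary still consists of lower-dimensional orbits, but one must be careful that Hausdorff (Chabauty) convergence of the groups places $\mathrm{Lie}(A)$ in the closure with the real-dimension bound intact, and that no dimension is lost in passing between the group normalizer $N_G(B)$ and the subalgebra normalizer. Packaging these two points — semicontinuity of the stabilizer dimension and the correspondence between the two topologies — is the only real obstacle; the rest is the orbit-stabilizer bookkeeping above.
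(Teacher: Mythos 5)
You should know at the outset that the paper contains no proof of this proposition: it is imported wholesale from \cite{Cooper}, Proposition 3.2, and used as a black box to rule out backward arrows in $\Gamma$. So there is no internal argument to compare against, and your proposal must be judged as a standalone proof of the cited fact. On that score it is essentially right, and it is the standard argument: realize conjugation as the algebraic $\mathrm{Ad}$-action on the Grassmannian $\mathrm{Gr}_k(\mathfrak{sl}_n(\R))$, identify the stabilizer of $\mathrm{Lie}(B)$ with $N_G(B)$, and use that orbits of algebraic actions are locally closed with boundary a union of strictly lower-dimensional orbits. Your reading of the direction is also the correct one: as printed, with $A$ a conjugacy limit of $B$, the conclusion should be $\dim N_G(A) \geq \dim N_G(B)$ (the displayed $\leq$ has the roles of $A$ and $B$ transposed relative to the hypothesis), and the paper's own application --- the normalizer dimensions $2,3,4,5$ increase along the arrows of $\Gamma$, so no arrow can be reversed --- confirms this reading.

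Three points still need discharging, two of which you flagged yourself. First, your Lie-algebra dictionary ($N_G(\mathrm{Lie}(B))=N_G(B)$; conjugate algebras if and only if conjugate groups) requires $A$ and $B$ connected; that suffices for this paper, where every group in $\mathcal{Q}$ is isomorphic to $\R^2$, but you should say explicitly that you are proving the connected case only. Second, the passage from Chabauty convergence $P_nBP_n^{-1}\to A$ to Grassmannian convergence $\mathrm{Ad}(P_n)\mathrm{Lie}(B)\to\mathrm{Lie}(A)$ silently uses that conjugacy limits preserve dimension: compactness of the Grassmannian gives a subsequential limit $\mathfrak{h}$, and condition (b) in the definition of convergence gives $\exp(t\mathfrak{h})\subseteq A$, hence only $\mathfrak{h}\subseteq\mathrm{Lie}(A)$; to upgrade this to equality you need $\dim A=\dim B$, which is exactly \cite{Cooper} Proposition 3.1 (invoked elsewhere in the paper) and should be cited here too. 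Third, in the equality clause, ``a full-dimensional orbit in an irreducible orbit closure is dense and open'' is the statement over $\mathbb{C}$; over $\R$ the real points of a single complex orbit may split into several $G$-orbits of the same dimension, so density fails. The repair is the complexification argument you gesture at: $\mathrm{Lie}(A)$ cannot lie over a boundary complex orbit (those have strictly smaller dimension), so it lies in $O_{\mathbb{C}}(\R)$, where $O_{\mathbb{C}}$ is the complex orbit of $\mathrm{Lie}(B)$; this set is a finite disjoint union of open $G$-orbits, each of which is therefore also closed in $O_{\mathbb{C}}(\R)$, so a Hausdorff limit of points of $G\cdot\mathrm{Lie}(B)$ that lands in $O_{\mathbb{C}}(\R)$ must land in $G\cdot\mathrm{Lie}(B)$ itself. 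With these three items filled in, your proof is complete.
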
 

As a corollary of proposition \ref{normalizer}, we see if we have both conjugacy limits $A \to B$ and $B \to A$, then $A$ and $B$ are conjugate. Computing the dimension of the normalizer shows there are no arrows going backwards in the digraph: $\dim N_G (C) = 2, \dim N_G (F) = 3, \dim N_G(N_1) = 4, $ and $\dim N_G(N_2) = \dim N_G (N_3) = 5$, where $G= SL_3(\R)$.

It remains to show neither $N_2$ or $N_3$ is a limit of the other.  %Since $N_2$ and $N_3$ are dual, if $N_2 \to N_3$ then $N_3 \to N_2$ (and vice versa). 
Since $\dim N_G(N_2) = \dim N_G (N_3)$, if $N_2 \to N_3$, then  proposition \ref{normalizer} would imply $N_2$ and $N_3$ are conjugate, which is false.  Therefore, neither $N_2$ or $N_3$ can limit to the other.   This finishes the proof of theorem \ref{maindig}. \qed

\begin{remark}
Notice $N_3$ fixes every point on a line, and $N_2$ preserves every line through a fixed point.  The maximal configurations for $N_2$ and $N_3$ are dual.  Moreover, duality induces an automorphism of the digraph of conjugacy limits. 
\end{remark}

%Again, we interpret this limit on characteristic degenerate triangle configurations as a rearrangement of the vertices of a nonstandard triangle in $\HR P^2$.  
%By theorem \ref{maintri}, conjugacy limits groups of $C$ in $\mathcal{Q}$ are in bijection with characteristic degenerate triangle configurations in $\mathcal{TQ}$. %By proposition \ref{pres}, if $A$ is a conjugacy limit group of $C$ by a sequence of matrices $P_n$, then $A$ must preserve the degenerate triangle configuration which is a limit of $TC$ under $P_n$.  
%Therefore $\Gamma$ may be interpreted as a graph of limits of degenerate triangle configurations. 

%  Proposition \ref{normalizer} implies that if $A \to B$ the dimension of the orbit closures of $B$ is smaller than the dimension of the orbit closures of $A$, since
%$$ \dim N_G(A) + \dim (G/ N_G(A)) = \dim G,  $$
 %So the maximal configuration preserved by $B$ may be a proper subset of the maximal configuration preserved by $A$. 
%and $\dim (G/ N_G(A))= \dim (\textrm{orbit closures of } A)$.  
Suppose $H,L \leq SL_n(\R)$ are not conjugate, and $H \to L$. We say the action from $H$ to $L$ is \emph{decompressed} if there is an orbit closure of $H$ which is a union of infinitely many orbit closures of $L$.  In the conjugacy limit  $N_1 \to N_2$, the action on the link of the fixed point is decompressed, so every line through the fixed point is preserved.  This is the maximal configuration for $N_2$: a single point is fixed and every line through the point is preserved.  In the conjugacy limit $N_1 \to N_3$, the action on the preserved line is decompressed, so every point on the line is fixed.  This is the maximal configuration for $N_3$: a line with every point on the line fixed. 

We concluded this section with a final property satisfied by real conjugacy limits, which we will use in corollary \ref{coords}.  %Every conjugacy limit group under an arbitrary sequence of matrices is conjugate to a conjugacy limit group by an upper triangular sequence of matrices.   

 \begin{lemma} \label{uppertri}
  Suppose $H$ is a subgroup of $SL_3(\R)$, and $P_n$ is a sequence of conjugating matrices such that $P_n H P_n ^{-1}$ has conjugacy limit $L \leq SL_3 ( \R)$.  There is a sequence of upper triangular matrices, $P_n '$ such that $P_n' H {P_n'} ^{-1}$ converges to a conjugate of $L$. 
 \end{lemma}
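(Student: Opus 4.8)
The plan is to reduce each $P_n$ to its upper triangular part using the Iwasawa (equivalently Gram--Schmidt / QR) decomposition of $SL_3(\R)$, and then exploit the compactness of the orthogonal factor. Recall that every $P \in SL_3(\R)$ factors as $P = KB$, where $K \in SO(3)$ and $B$ is upper triangular with positive diagonal entries; the determinant condition $\det P = 1$ forces $\det B > 0$ and hence $\det K = 1$, so both factors lie in $SL_3(\R)$. I would write $P_n = K_n B_n$ accordingly, so that each $B_n$ is upper triangular and each $K_n \in SO(3)$.

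Set $L_n := P_n H P_n^{-1}$, so that $L_n$ converges to $L$ by hypothesis. Since $SO(3)$ is compact, after passing to a subsequence I may assume $K_n \to K$ for some $K \in SO(3)$; this is harmless, since every subsequence of $L_n$ still converges to $L$. From $B_n = K_n^{-1} P_n$ one computes $B_n H B_n^{-1} = K_n^{-1} L_n K_n$. The claim is that this sequence converges to $K^{-1} L K$, which is a conjugate of $L$. Granting this, the (reindexed) subsequence $P_n' := B_n$ consists of upper triangular matrices and satisfies $P_n' H (P_n')^{-1} \to K^{-1} L K$, as required.

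The key step is to verify $K_n^{-1} L_n K_n \to K^{-1} L K$ directly against the definition of convergence of subgroups given by conditions (a) and (b). For (a): given $g = K^{-1} \ell K$ with $\ell \in L$, condition (a) for $L_n \to L$ yields $\ell_n \in L_n$ with $\ell_n \to \ell$, and then $K_n^{-1} \ell_n K_n \in K_n^{-1} L_n K_n$ converges to $K^{-1} \ell K = g$. For (b): if $g_n = K_n^{-1} \ell_n K_n$ with $\ell_n \in L_n$ and some subsequence $g_{n_k} \to g$, then $\ell_{n_k} = K_{n_k} g_{n_k} K_{n_k}^{-1} \to K g K^{-1}$, so condition (b) for $L_n \to L$ gives $K g K^{-1} \in L$, whence $g = K^{-1}(K g K^{-1}) K \in K^{-1} L K$. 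Both verifications use only $K_n \to K$ together with continuity of multiplication and inversion in $SL_3(\R)$.

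I expect this last continuity verification to be the only real content; everything else is bookkeeping. The subtle point is that convergence of subgroups here is defined through the two sequence conditions rather than as a bare topological-group statement, so one cannot invoke ``conjugation is continuous'' as a black box and must unwind (a) and (b) by hand. Compactness of $SO(3)$ is essential: it is what guarantees a limit $K$ for the orthogonal factors, whereas the upper triangular factors $B_n$ themselves need not converge and in general will not.
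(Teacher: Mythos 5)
Your proof is correct and follows essentially the same route as the paper: both use the Iwasawa/QR decomposition $P_n = K_n B_n$ with $B_n$ upper triangular, invoke compactness of the orthogonal group to extract a convergent subsequence $K_n \to K$, and conclude that $B_n H B_n^{-1} = K_n^{-1}(P_n H P_n^{-1})K_n$ converges to the conjugate $K^{-1}LK$. Your explicit verification of conditions (a) and (b) is a welcome bit of extra care that the paper leaves implicit.
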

 \begin{proof} Recall the Iwasawa decomposition of a matrix, $P =KNA$, where $K$ is orthogonal, $N$ is unipotent, and $A$ is diagonal.  Writing each $P_n$ in this way, we have $ P_n H P_n ^{-1} = K_n (N_n A_n H A_n ^{-1} N_n ^{-1}) K_n ^{-1}$.   The orthogonal group is compact, so every sequence has a convergent subsequence, and in particular, every sequence $K_n H' K_n ^{-1}$ converges to a conjugate of $H'$.  Thus we may assume $P_n= N_nA_n$, so $P_n$ is upper triangular.  \qed
 \end{proof}

 \section{The Hyperreals, $\HR$}\label{HR}%%%%%%%%%%%%%%%%%%%%%%%%%%

The second part of the paper is devoted to proving theorem \ref{mainNS}.   For a good introduction to the hyperreals, see \cite{Gold}.  %We will work with the hyperreal numbers, $\HR$. 
The hyperreal numbers, $\HR$, are a non-Archimedian field, with $\R \subset \HR$. % Elements of $\HR$ are equivalence classes of sequences of real numbers.   

Fix $\mathcal{F}$, a non principal ultra filter on $\N$.  Let $\alpha =( a_n)$ and $\beta = (b_n)$ be sequences of real numbers.   Define an equivalence relation $\sim$ on sequences of real numbers by $(a_n) \sim (b_n)$ if $\{ n \in \N : b_n = a_n \} \in \mathcal{F}$. % If $\sim$ holds, we say that as hyperreal numbers $\alpha = \beta $.   
A hyperreal number is an equivalence class $[(\alpha_n)]$. 
There is a natural inclusion $\R \hookrightarrow \HR$ which sends a real number to a constant sequence. Hyperreal addition and multiplication are defined component-wise.   The hyperreals are ordered by $\alpha \leq \beta $ if $\{ n \in \N : a_n \leq b_n \} \in \mathcal{F}$. The absolute value is defined in any ordered field.  

\begin{definition} \label{hyperreal}
\begin{enumerate} 
\item A hyperreal number $\alpha$ is \emph{infinite} if $|\alpha |> n$ for every $n \in \N$.  A hyperreal number $\alpha $ is \emph{infinitesimal} if $|\alpha| < \frac{1}{n}$ for every $n \in \N$.  If $\alpha$ not infinite, $\alpha$ is \emph{finite}.  Denote the set of finite hyperreals by $\mathbb{F}$.  
%\item Any hyperreal $\alpha = r + \beta$ where $r$ is appreciable and $\beta$  is infinite. The finite part of $\alpha$ is $\textrm{Fin}(\alpha) = r$. 
\item Let $\alpha \in \HR$ be a finite hyperreal.  The \emph{standard part} or \emph{shadow} of $\alpha$  is $sh(\alpha) \in \R$, where $ \alpha - sh(\alpha)$ is infinitesimal.  Note that $sh: \mathbb{F} \to \R$ is a ring homomorphism.
\item A hyperreal $\alpha \in \HR$ is \emph{appreciable} if $\alpha$ is neither infinitesimal or infinite,  i.e., $\alpha $ is a non-zero real number plus an infinitesimal. 
 \item The \emph{galaxy} of $ \alpha \in \HR$ is $\textrm{Gal}(\alpha) = \{ x \in \HR : \alpha -x \in \mathbb{F} \}$.
 \item The \emph{$\varepsilon$-galaxy} of $\alpha$ in $\HR$ is $\textrm{Gal}_{\varepsilon}(\alpha) = \{ x \in \HR : \alpha -x  \in \varepsilon \cdot \mathbb{F} \}$.

 \end{enumerate}
\end{definition}

% One can check (or see \cite{Gold}) that 
Note a finite hyperreal may be either appreciable or infinitesimal. Clearly $\textrm{Gal}$ and $\textrm{Gal}_\varepsilon$ define equivalence relations on $\HR$.  We denote hyperreal objects in script $\NS{G}$ or $\NS{L}$, and denote their standardizations $G$ or $L$. We use the usual inner product, $\langle x,y \rangle = x \cdot y$ for $x,y \in \R ^n$ or $\HR ^n$.

\begin{definition}
\begin{enumerate} 
 \item A \emph{projective basis} for $\R P^n$ (or $\HR P^n$) consists of $n+2$ equivalence classes of vectors,  such that any $n+1$ vectors form a basis for the underlying vector space.  %We will write \emph{basis} when we mean projective basis. %and use the symbol $\mathcal{B}$ for both.  The meaning should be clear from the context. 
 The word basis means either vector space basis or projective basis, depending on the context. 
  \item The \emph{usual basis} for $\HR^{n+1}$ (or $ \R ^{n+1}$) is $\{e_0,e_1, ... ,e_n\}$. The \emph{usual basis} for $\HR P^n$ (or $\R P^n$) is $\{ [e_0],... ,[e_n], [e_0 + e_1+ \cdot \cdot \cdot +e_n] \}$. 
  \item  The \emph{shadow} map is $sh: \HR P^n \to \R P^n$ where $[v] \mapsto [sh(\frac{v}{||v||})] $, and we take the shadows of the coordinates.  The \emph{shadow} of a basis $\NS{B}\subset \HR P^n $  is $sh( \NS{B}) =\{ [sh(v)] | v \in \NS{B}\} \subset \HR P^n$.
    \item  A hyperreal projective basis $\NS{B}$ is \emph{appreciable} if $sh(\NS{B})$ is a projective basis for $\R P^n$.  A hyperreal projective transformation is \emph{appreciable} if the image of some appreciable basis is an appreciable basis.

 \end{enumerate} 
 \end{definition} 
 
 % The shadow of a nonstandard basis is well defined, since $sh(-v) = - sh(v)$. 
 
 Notice the shadow of a hyperreal basis may not be a real basis, since shadows of basis  elements  could be the same!   A hyperreal transformation $[A] \in PGL_n(\HR)$ is finite if and only if there exists $[B] \in PGL_n(\R)$ and $\lambda \in \HR$ such that $B- \lambda A$ is infinitesimal.   Every finite hyperreal projective transformation differs from a real projective transformation infinitesimally. 

%The hyperreal numbers may be endowed with the interval topology, where any open interval $(a, b)$ with $a, b \in \HR$ may be a neighborhood, and a set is open if it is a union of intervals.  (See \cite{Gold} for more on the interval topology and other possible topologies on $\HR$.)   Then $GL_n(\HR)$ inherits a topology as an open subset of the real vector space $M_n(\HR)$, just as $GL_n (\R)$ does from $M_n(\R)$. 
 
\begin{definition}Given $\NS{G}  \leq SL_n ( \HR)$, the \emph{finite part}, $\textrm{Fin}(\NS{G})$, is the subset of all elements that have finite entries.  The \emph{subset of infinitesimal elements}, $\NS{I} $, is 
%the infinitesimal neighborhood of the identity.  $\NS{I}$ is 
the set of matrices that are the identity matrix plus a matrix with infinitesimal entries. 
\end{definition}

\begin{lemma} 
 $\textrm{Fin}(\NS{G})$ and $\NS{I}$ are subgroups of $\NS{G}$. 
 %both independent of choice of basis. 

\end{lemma}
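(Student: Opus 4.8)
The plan is to verify the three subgroup axioms—identity, closure under products, closure under inverses—for each of $\textrm{Fin}(\NS{G})$ and $\NS{I}$, leveraging the ring structure of the finite hyperreals $\mathbb{F}$ and, crucially, the fact that every element of $\NS{G}\leq SL_n(\HR)$ has determinant $1$. In both cases the identity and closure under products are routine; the work is concentrated in the inverse step.

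For $\textrm{Fin}(\NS{G})$, first I would note that the identity lies in it, since its entries are $0$ and $1$, which are finite. For products, recall that $sh:\mathbb{F}\to\R$ is a ring homomorphism, so $\mathbb{F}$ is closed under addition and multiplication; hence each entry of a product $AB$, being a sum of products of finite entries, is again finite. The one nontrivial step is closure under inverses, and here I would use Cramer's rule in the form $A^{-1}=(\det A)^{-1}\,\textrm{adj}(A)$. Since $A\in SL_n(\HR)$ we have $\det A=1$, so $A^{-1}=\textrm{adj}(A)$, whose entries are cofactors—polynomial expressions in the entries of $A$. As $\mathbb{F}$ is a ring, these cofactors are finite, so $A^{-1}\in\textrm{Fin}(\NS{G})$.

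For $\NS{I}$, I would write a typical element as $A=I+\varepsilon_A$ with $\varepsilon_A$ having infinitesimal entries, the identity corresponding to $\varepsilon_A=0$. For products, expand $(I+\varepsilon_A)(I+\varepsilon_B)=I+(\varepsilon_A+\varepsilon_B+\varepsilon_A\varepsilon_B)$ and observe that the parenthesized matrix has infinitesimal entries, since sums and products of infinitesimals are infinitesimal. For inverses, I would first note that $\NS{I}\subseteq\textrm{Fin}(\NS{G})$, so by the previous paragraph $A^{-1}$ already has finite entries. Then from $A^{-1}-I=A^{-1}(I-A)=-A^{-1}\varepsilon_A$, together with the fact that a finite hyperreal times an infinitesimal is infinitesimal, every entry of $A^{-1}-I$ is infinitesimal, giving $A^{-1}\in\NS{I}$.

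The main obstacle is closure under inverses, and the point worth emphasizing is that it depends essentially on the ambient group being $SL_n(\HR)$ rather than $GL_n(\HR)$: a matrix with finite entries but infinitesimal determinant would have an inverse with infinite entries. The determinant being exactly $1$ is what forces the adjugate—equivalently the inverse—back into $\mathbb{F}$, and the same $SL_n$ constraint underlies the infinitesimal case once the inclusion $\NS{I}\subseteq\textrm{Fin}(\NS{G})$ is in hand.
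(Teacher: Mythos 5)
Your proof is correct and follows essentially the same route as the paper: closure under products from the fact that sums and products of finite hyperreals are finite, and closure under inverses via $A^{-1}=\textrm{Adj}(A)/\det A$ with $\det A=1$. The only difference is that you spell out the $\NS{I}$ case (via $A^{-1}-I=-A^{-1}\varepsilon_A$), which the paper dispatches with ``similarly.''
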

\begin{proof} 
Sums and products of finite hyperreals are finite, so $\textrm{Fin}(\NS{G})$ is closed under multiplication. Let $A \in \textrm{Fin}(\NS{G}) \leq SL_{n}( \HR)$. Since sums and products of finite hyperreal numbers are finite, $\textrm{Adj} A$ is finite. Since $\textrm{det}A= 1$, then $A ^{-1} = \frac{\textrm{Adj} A}{\det A} \in \textrm{Fin}(\NS{G})$.   Thus $\textrm{Fin}(\NS{G})$ is a group.  Similarly, $\NS{I}$ is a group.  \qed
%Suppose $\NS{B '}$ is another appreciable basis. Then $\NS{B'}$  is the image of $\NS{B}$ under an appreciable projective transformation, so the entries of $\textrm{Fin}(\NS{G})$ are finite in $\NS{B '}$.  The proof that $\NS{I}$ is a subgroup independent of choice of basis is analogous. 
\end{proof} 

\begin{definition} Given $\NS{G} \leq SL_n (\HR)$ and $\NS{A} \in \textrm{Fin}(\NS{G})$, the \emph{shadow}, $sh(\NS{A})$, of $\NS{A}$,  has entries that are the shadows of entries of $\NS{A}$.  The \emph{standard part} or \emph{shadow} of $\NS{G}$ is $\textrm{sh}(\NS{G}) := \{ sh(\NS{A}) | \NS{A} \in \textrm{Fin}(\NS{G}) \}$.
\end{definition} 

 \begin{lemma}  $sh( \textrm{Fin} (\NS{G})) \cong \textrm{Fin}(\NS{G})/ (\NS{I}\cap \textrm{Fin}(\NS G)) $.
 \end{lemma}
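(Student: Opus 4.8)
The plan is to recognize this as an instance of the first isomorphism theorem for groups: I would show that the restriction of the shadow map to $\textrm{Fin}(\NS{G})$ is a surjective group homomorphism onto $sh(\textrm{Fin}(\NS{G}))$ whose kernel is exactly $\NS{I}\cap \textrm{Fin}(\NS{G})$, and then read off the claimed isomorphism.

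First I would verify that $sh$ is a homomorphism on $\textrm{Fin}(\NS{G})$. For $\NS{A},\NS{B}\in \textrm{Fin}(\NS{G})$ the $(i,j)$ entry of $\NS{A}\NS{B}$ is $\sum_k \NS{A}_{ik}\NS{B}_{kj}$, a sum of products of finite hyperreals. The preceding lemma guarantees $\NS{A}\NS{B}\in \textrm{Fin}(\NS{G})$, so all of these quantities lie in the domain $\mathbb{F}$ of the ring homomorphism $sh:\mathbb{F}\to\R$. Since $sh$ respects sums and products, we get $sh(\NS{A}\NS{B})_{ij}=\sum_k sh(\NS{A}_{ik})\,sh(\NS{B}_{kj})=(sh(\NS{A})sh(\NS{B}))_{ij}$, i.e. $sh(\NS{A}\NS{B})=sh(\NS{A})sh(\NS{B})$. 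Applying the same reasoning to the determinant, which is a polynomial in the entries, gives $\det(sh(\NS{A}))=sh(\det \NS{A})=sh(1)=1$, so $sh(\NS{A})\in SL_n(\R)$ and the image $sh(\textrm{Fin}(\NS{G}))$ is genuinely a subgroup of $SL_n(\R)$.

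Next I would identify the kernel. By definition $sh(\NS{A})$ equals the identity matrix precisely when every entry of $\NS{A}$ differs infinitesimally from the corresponding entry of the identity, that is, when $\NS{A}$ is the identity plus a matrix with infinitesimal entries. This is exactly the condition $\NS{A}\in\NS{I}$. Since the kernel automatically lies in $\textrm{Fin}(\NS{G})$ (indeed $\NS{I}\subseteq\textrm{Fin}(\NS{G})$, as infinitesimals are finite), the kernel equals $\NS{I}\cap\textrm{Fin}(\NS{G})$. The first isomorphism theorem then yields $\textrm{Fin}(\NS{G})/(\NS{I}\cap\textrm{Fin}(\NS{G}))\cong sh(\textrm{Fin}(\NS{G}))$, as claimed.

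The only real content — and the step I would be most careful about — is the interchange of $sh$ with matrix multiplication, which relies on two facts: that $sh$ is a ring homomorphism only on the \emph{finite} hyperreals $\mathbb{F}$, and that $\textrm{Fin}(\NS{G})$ is closed under multiplication so that no infinite entry is ever produced inside the computation. Both are already in hand, from the definition of $sh$ and the preceding lemma, so the argument reduces to invoking them correctly; normality of the kernel, and hence well-definedness of the quotient, is then automatic.
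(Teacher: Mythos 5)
Your proof is correct and takes essentially the same route as the paper: both identify $sh$ restricted to $\textrm{Fin}(\NS{G})$ as a group homomorphism (via the ring homomorphism $sh:\mathbb{F}\to\R$ applied entrywise), compute the kernel as $\NS{I}\cap\textrm{Fin}(\NS{G})$, and invoke the first isomorphism theorem. You simply supply more detail than the paper does on why $sh$ respects matrix multiplication.
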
 
 \begin{proof} Note  $sh : \textrm{Fin}(\NS{G}) \to sh (\textrm{Fin}(\NS{G}))$ is a homomorphism since %Let $\NS A, \NS B \in \textrm{Fin}(\NS G)$. Since the map is defined by taking the shadow of each entry then, 
 the map is defined by taking the shadow of each entry, and $sh: \mathbb{F} \to \R$ is a ring homomorphism. 
 %$$sh( \NS A \NS B) = [ sh(\NS A \NS B )_{ij}] = [sh (\NS A) _{ij}] [ sh( \NS B) _{ij}] = sh \NS A sh \NS B. $$
  The kernel is $\NS{I} \cap \textrm{Fin}(\NS G)$.  The map $sh$ is surjective since $\R \hookrightarrow \HR$.  Apply the first isomorphism theorem. \qed
%Suppose $\NS{B}'$ is another appreciable basis, then $\NS{B}'$ is the image of $\NS{B}$ under an appreciable projective transformation, $\NS{\theta} \in \textrm{Fin}(\NS{G})$.  Since $[\NS{A}]_{\NS{B}'} = \theta \NS{A} \theta^{-1}$, we see $sh(\NS{A})$ is independent of choice of basis. \\
 % Then $| \theta \NS{A} \theta^{-1} -  sh(\theta) \NS{A} sh(\theta^{-1}) |$ is infinitesimal.  Since two numbers that are infinitesimally close have the same shadow,  $sh( \theta \NS{A} \theta^{-1})=sh( sh(\theta) \NS{A} sh(\theta^{-1}))= sh(\theta) sh( \NS{A}) sh( \theta^{-1}), $ where the second equality holds since $sh( \theta)$ is a real transformation.  So, $sh([\NS{A}]_{\NS{B}})$ and $sh([\NS{A}]_{\NS{B}'})$ are conjugate by a real projective transformation. 
 \end{proof} 

%Let $\R\textrm{-Vect}$ denote the category of  real vector spaces and invertible linear transformations, and $\HR  \textrm{-Vect}$ denote the category of hyperreal vector spaces and invertible hyperreal linear transformations.  Then  $ \otimes_{\R} \HR: \R \textrm{-Vect} \hookrightarrow \HR \textrm{-Vect}$, where $ \otimes_{\R} \HR(V) = V \otimes_{\R} \HR$, is a faithful functor.   In general,  $\otimes_{\R} \HR$ is not surjective, since $\HR  \textrm{-Vect}$ contains infinite hyperreal transformations, but the image of $\otimes_{\R} \HR$ contains only finite hyprereal transformations.  Let $\mathbb{P}( \otimes_{\R} \HR): \R \textrm{-Proj} \hookrightarrow \HR \textrm{-Proj}$ be the induced functor on the categories of projective space and projective transformations. Then $\mathbb{P}( \otimes_{\R} \HR)$ is surjective on objects.  Given a morphism in $\HR \textrm{-Proj}$, we may scale it by a diagonal matrix to be finite.  Since any hyperreal projective transformation differs from a transformation in the image of $\mathbb{P}( \otimes_{\R} \HR)$ by an infinitesimal transformation, the image of the set of real projective transformations under $\mathbb{P}( \otimes_{\R} \HR)$ is dense in the set of hyperreal projective transformations.  Therefore $\mathbb{P}( \otimes_{\R} \HR)$ is essentially surjective. 

Instead of writing this discussion in terms of matrices, we could have considered nonstandard projective transformations in the context of \emph{appreciable} (vector space or projective) bases.  %Suppose a projective transformation maps an appreciable basis $\NS{B}$ to an appreciable basis.  
%Let $\NS{B}$ be an appreciable vector space basis. If $\NS{B}'$ is another appreciable basis, then $\NS{B}'$ is the image of $\NS{B}$ under an appreciable linear transformation, so the entries of $\textrm{Fin}(\NS{G})$ are finite in $\NS{B '}$. Since any subset of $n+1$ elements of a projective basis form a basis for the underlying vector space, we see an appreciable projective transformation maps any appreciable projective basis to another appreciable projective basis. 
%The other proofs are similar. 
It is easy to show a hyperreal projective transformation is appreciable if and only if the image of every appreciable basis is an appreciable basis. 

 A \emph{nonstandard triangle configuration} consists of three lines in general position in $\HR P^2$, which intersect in three distinct points.  Let $\NS{P}$ be a matrix of hyperreals, and $\NS{T}$ be the nonstandard triangle configuration which is the image of a projective triangle configuration under $\NS{P}$. Let  $\NS{C} \leq SL_3(\HR)$ be the group of positive diagonal (hyperreal) matrices.  A conjugate of $\NS{C}$ by  $\NS{P}$ is uniquely determined as the stabilizer of the vertices of $\NS{T}$.  %Every nonstandard triangle determines a \emph{nonstandard triangle configuration}, by extending the lines in the obvious way, which we will also denote $\NS{T}$. 
 
% \begin{lemma}\label{shT}
% The shadow of a nonstandard triangle configuration is the degenerate triangle configuration that is the (Hausdorff) limit of images of a projective triangle configuration under a sequence of projective transformations.
% \end{lemma}

%We claim that the shadow map is well defined and a bijection from equivalence classes of nonstandard triangles to characteristic triangle classes.   First we show:

%Taking the shadow of nonstandard triangle configurations determines an \emph{equivalence relation on the set of degenerate triangle configurations}. Therefore,
%Recall the 7 degenerate triangle configurations are divided into 5 \emph{characteristic triangle classes}.  %A characteristic triangle class consists of the set of images of all nonstandard triangles in an equivalence class of nonstandard triangle configuration of theorem \ref{mainNS}. %The configuration of one point at the intersection of two lines, and one point at the intersection of three lines are in the same equivalence class; and the configurations of two points on a line, and three points on a line belong to the same equivalence class.    
%The configurations $N_2$ and $N'_2$ belong to the same equivalence class, as do $N_3$ and $N'_3$. 
%We take a characteristic degenerate triangle configuration as a representative of each characteristic triangle class, as shown in figure \ref{DegenTri}. 
   
    \begin{theorem}\label{HRlimgp} 
    Suppose $P_n \in GL_3(\R)$ and $P_n C P_n^{-1}$ converges to $L$.  Define $\NS P: = [P_n] \in GL_3 (\HR)$, and $\NS G := \NS P \NS C \NS P^{-1}$.  Then
    \begin{enumerate} 
    \item $sh (\textrm{Fin}(\NS G)) = L$
    \item $\NS G$ is the conjugate of $\NS C$ that preserves the points $\NS P ([e_1], [e_2], [e_3]) \subset \HR P^2$. 
   \item the nonstandard triangle configuration $\NS T$ with vertices $\NS P ([e_1], [e_2], [e_3])$ has shadow $T = sh (\NS T)$ a degenerate triangle, and $L$ preserves $T$. 
 % \item   Every conjugacy limit group of $C \leq SL_3(\R)$ is given by $sh(\textrm{Fin}(\NS{G}))$, where $\NS{G}$ is a conjugate of $\NS{C} \leq SL_3(\HR)$ by a hyperreal matrix. 
%  \item Every degenerate triangle configuration, $T$, is the shadow of a nonstandard triangle configuration, $\NS{T}$.  
%  \item The group $\NS{G}$ preserves $\NS{T}$, and $sh(\textrm{Fin}(\NS{G}))$ preserves $T$. 
    \end{enumerate} 
 \end{theorem}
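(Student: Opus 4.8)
The plan is to establish the three assertions in order, with part (1) carrying essentially all of the content; parts (2) and (3) then follow from part (1) together with the definitions and with Proposition \ref{pres}. Throughout I would write a typical element of $\NS{G}=\NS{P}\NS{C}\NS{P}^{-1}$ as $[P_nD_nP_n^{-1}]$, where $[D_n]\in\NS{C}$ is a hyperreal positive diagonal matrix; after altering $D_n$ on a set outside $\mathcal{F}$ I may assume each $D_n$ is genuinely positive diagonal, so that $g_n:=P_nD_nP_n^{-1}\in P_nCP_n^{-1}$ for every $n$. The single technical fact I would isolate first is the dictionary between shadows and classical limits: if $\NS{A}=[A_n]$ has finite entries then $sh(\NS{A})$ is the entrywise matrix $\big(\lim_{\mathcal{F}}(A_n)_{ij}\big)$, and since a $3\times 3$ matrix has only finitely many entries and $\mathcal{F}$ is a non-principal ultrafilter, the set $\{\,n:\|A_n-sh(\NS{A})\|<1/k\,\}$ lies in $\mathcal{F}$, hence is infinite, for every $k$; choosing $n_1<n_2<\cdots$ with $n_k$ in the $k$-th of these sets extracts a genuine subsequence $A_{n_k}\to sh(\NS{A})$ in the ordinary sense.

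For part (1) I would prove the two inclusions separately, translating the conditions (a) and (b) that define Chabauty convergence. For $sh(\textrm{Fin}(\NS{G}))\subseteq L$: given $g=sh(\NS{A})$ with $\NS{A}=[g_n]\in\textrm{Fin}(\NS{G})$, the dictionary gives a subsequence $g_{n_k}\to g$ with $g_{n_k}\in P_{n_k}CP_{n_k}^{-1}$, and $g\in SL_3(\R)$ since $\det g = sh(\det\NS{A})=1$ as $sh$ is a ring homomorphism; condition (b) then forces $g\in L$. For $L\subseteq sh(\textrm{Fin}(\NS{G}))$: given $\ell\in L$, condition (a) produces $\ell_n=P_nD_nP_n^{-1}\to\ell$; a convergent sequence is bounded, so $[\ell_n]\in\textrm{Fin}(\NS{G})$, and the $\mathcal{F}$-limit of a classically convergent sequence equals its ordinary limit, whence $sh([\ell_n])=\ell$.

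Part (2) is the transfer principle: by definition $\NS{C}$ is the positive diagonal group, which stabilizes the coordinate points $[e_1],[e_2],[e_3]$, and this is preserved under conjugation, so $\NS{G}=\NS{P}\NS{C}\NS{P}^{-1}$ stabilizes the three points $\NS{P}([e_1]),\NS{P}([e_2]),\NS{P}([e_3])$, which are in general position as $\NS{P}\in GL_3(\HR)$; the uniqueness statement recorded just before the theorem — that the conjugate of $\NS{C}$ fixing a triple of points in general position is uniquely determined — identifies $\NS{G}$ exactly. For part (3), the shadow $T=sh(\NS{T})$ is, by the dictionary, the limit $\lim_{\mathcal{F}}P_n(TC)$ of the triangle configurations $P_n(TC)$; passing to a subsequence along which these configurations converge classically exhibits $T$ as a limit of triangle configurations, i.e. as a degenerate triangle configuration in the sense of Section \ref{DTC}. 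That $L$ preserves $T$ I would obtain by applying Proposition \ref{pres} to this subsequence, since $P_{n_k}(TC)\to T$ and $P_{n_k}CP_{n_k}^{-1}\to L$ (a subsequence of a Chabauty-convergent sequence has the same limit).

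The step I expect to be most delicate is the faithful passage, used throughout, between the hyperreal shadow — an ultrafilter construction — and the sequential limit that defines $L$ and that Proposition \ref{pres} requires; concretely, one must ensure the entrywise $\mathcal{F}$-limit is realized by a single classical subsequence simultaneously for all nine matrix entries (and, in part (3), simultaneously for the configuration), and that the resulting limit group is unambiguous. If instead one tried to prove that $L$ preserves $T$ directly from parts (1) and (2), the genuine obstacle would surface: the naive commutation $sh(\NS{A}\cdot\NS{p})=sh(\NS{A})\cdot sh(\NS{p})$ can fail when the eigenvalue $d_i$ of $\NS{A}$ on a vertex $\NS{p}$ is infinitesimal, so that $sh(\NS{A}\cdot\NS{p})$ degenerates; routing the argument through Proposition \ref{pres} is precisely what sidesteps this difficulty.
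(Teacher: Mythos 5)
Your proof is correct, but for part (1) you take a genuinely different route from the paper. The paper's argument for (1) is indirect: it notes $\NS{G}\cong(\HR)^2$, invokes a dimension count (Proposition 3.1 of \cite{Cooper}) to get $sh(\textrm{Fin}(\NS{G}))\cong\R^2$, uses part 1 of Theorem \ref{maindig} to conclude the shadow is a conjugacy limit of $C$, and then simply asserts that since $\NS{P}=[P_n]$ the shadow equals $L$. You instead prove the set equality $sh(\textrm{Fin}(\NS{G}))=L$ directly, by verifying the two Chabauty inclusions (a) and (b) via the dictionary between entrywise ultralimits and classical subsequential limits (finitely many matrix entries, non-principal ultrafilter, diagonal choice of indices $n_k$). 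This is more self-contained and in fact supplies the justification for the paper's final sentence ``therefore $sh(\textrm{Fin}(\NS{G}))=L$,'' which is the real content of the claim and which the paper leaves implicit; what the paper's route buys instead is the structural observation that the shadow is a two-dimensional subgroup, independent of knowing $L$ in advance. Your parts (2) and (3) coincide with the paper's: (2) is the same one-line conjugation observation, and (3) is the paper's appeal to Proposition \ref{pres}, which you correctly flesh out by passing to a subsequence along which the normalized configurations $P_{n_k}(TC)$ converge classically to $sh(\NS{T})$. Your closing caveat --- that one cannot naively commute $sh$ with the group action when an eigenvalue is infinitesimal --- is exactly the reason (3) must be routed through Proposition \ref{pres} rather than deduced formally from (1) and (2).
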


\begin{proof} 
% A hyperreal number is an equivalence class of a sequence of real numbers, so a  sequence of conjugating matrices, $P_n$  determines a hyperreal conjugating matrix, $\NS{P}$. The sequence $P_n$ may also be viewed as a sequence of projective transformations, which determines a hyperreal projective transformation $\NS{P}$. 
 %Conjugating the diagonal Cartan subgroup in $SL_3 ( \HR)$ by a matrix of hyperreal numbers gives a subgroup $ (\HR ) ^ 2 \cong \NS{G} \leq SL_3( \HR)$.  The standard part of $\textrm{Fin}(\NS{ G})$ is a limiting group.  
 
(1) Conjugating $\NS{C}$ by $\NS{P}$,  gives $ (\HR ) ^ 2 \cong \NS{G} \leq SL_3( \HR)$. %the group which preserves the nonstandard triangle configuration, $\NS{T}$,  the image of a projective triangle configuration under $\NS{P}$, by lemma \ref{shT}. %Let $T := sh (\NS{T})$ be the corresponding characteristic representative for this characteristic triangle class as in lemma \ref{NSeqrel}.  Then  $G(T) := sh(\textrm{Fin}(\NS{G}(\NS{T})))    \leq SL_3(\R)$, the group which preserves the characteristic triangle for $T$, see figure \ref{DegenTri}.  For dimension reasons, (\cite{Cooper} Proposition 3.1), $G(T) \cong \R ^2$, and $G(T)$ is a conjugacy limit under the equivalence class of sequence of matrices given by $\NS{P}$. 
 For dimension reasons, (see \cite{Cooper} Proposition 3.1), $sh(\textrm{Fin}(\NS{G}))\cong \R ^2$.  Every subgroup of $SL_3(\R)$ isomorphic to $\R^2$ is a conjugacy limit of $C$, so $sh(\textrm{Fin}(\NS{G}))$ is a conjugacy limit of $C$.  By assumption, $[P_n] =\NS P$, therefore $sh(\textrm{Fin}(\NS{G}))=L$.
  
 %(2) %Recall $sh(\NS{T})$ is the shadow of the image of a projective triangle configuration under a nonstandard projective transformation, which is an equivalence class of a sequence of real projective transformations. 
% A nonstandard triangle is the image of a projective triangle under $\NS{P}$.  %Let $P_n$ be a sequence of (real) projective transformations in the equivalence class of $\NS{P}$.  
% The degenerate triangle that is the image of a projective triangle under $P_n$ is the shadow of the image of a projective triangle under $\NS{P}$.

(2)   Since $\NS C$ preserves $([e_1],[e_2] ,[e_3])$, then $\NS G= \NS{ P C P}^{-1}$ preserves $\NS P ([e_1], [e_2], [e_3])$. 
 
 (3) This follows from proposition \ref{pres}.  \qed
 \end{proof} 

%\begin{rmk}\label{corrsp HR}  Each entry in a hyperreal matrix is given as a sequence $(p_n)$.   So  a sequence of conjugating matrices $(P_n ) _ { n \in \N} \in SL_m ( \R)$ is a single hyperreal matrix in $\NS{P} \in SL_m ( \HR)$.  
%\end{rmk}  

 \section{Conjugacy Limits of the Diagonal Cartan subgroup in $SL_2 (\R)$}%%%%%%%%%%%%%%%%%%%%%%%%%%%%%%%%%%%%
 
 In this section, we show how to view a limit of the diagonal group in $SL_2(\R)$ as the shadow of the finite part of a conjugate of the diagonal group in $SL_2 (\HR)$. 
 
\begin{theorem} The Cartan subgroup $ \{  \big(\begin{smallmatrix}
a&0\\ 0& \frac{1}{a}
\end{smallmatrix} \big) | a >0 \} \leq SL_2 (\R)$,  has two conjugacy limits: the Cartan subgroup and the parabolic group
$ \{  \big(\begin{smallmatrix}
1&t\\ 0& 1
\end{smallmatrix} \big)|  t \in \R\} $.  The Cartan subgroup preserves the maximal configuration consisting of two fixed points on a projective line, and the parabolic group preserves the maximal configuration consisting of a projective line with one fixed point. 
\end{theorem}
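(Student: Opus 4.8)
The plan is to reduce the problem to a one-variable computation and then read off the two cases, so that this serves as the two-dimensional shadow of the dichotomy in Theorem \ref{mainNS}. First I would invoke the $SL_2$ analogue of Lemma \ref{uppertri}: writing each conjugating matrix by Iwasawa as $P_n = K_n N_n A_n$ with $K_n \in SO(2)$, and absorbing the compact factor (which alters the limit only by conjugacy, as $SO(2)$ is compact), I may assume every $P_n$ is upper triangular, $P_n = \left(\begin{smallmatrix} \lambda_n & \mu_n \\ 0 & \lambda_n^{-1}\end{smallmatrix}\right)$ with $\lambda_n > 0$. This step simultaneously disposes of the elliptic possibility: a limit of upper triangular groups is upper triangular, so no conjugate of the Cartan can degenerate to a rotation group.

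Next I would compute the conjugate directly. A short calculation gives
$$P_n \begin{pmatrix} a & 0 \\ 0 & a^{-1}\end{pmatrix} P_n^{-1} = \begin{pmatrix} a & \lambda_n\mu_n(a^{-1}-a) \\ 0 & a^{-1}\end{pmatrix},$$
so the entire dependence on $P_n$ collapses into the single scalar $c_n := \lambda_n\mu_n$. Passing to a subsequence I arrange $c_n \to c \in \R \cup \{\pm\infty\}$, and the argument splits in two. If $c$ is finite, the limit group is $\{\left(\begin{smallmatrix} a & c(a^{-1}-a) \\ 0 & a^{-1}\end{smallmatrix}\right) : a>0\}$, which is diagonalizable with eigenvalues $a, a^{-1}$ and hence equal to $Q A Q^{-1}$ for $Q = \left(\begin{smallmatrix}1 & c \\ 0 & 1\end{smallmatrix}\right)$; it is a conjugate of the Cartan $A$. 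If $c = \pm\infty$, then for fixed $a \neq 1$ the off-diagonal entry blows up, and to obtain a convergent sequence of group elements one must take $a = a_n \to 1$ with $\lambda_n\mu_n(a_n^{-1}-a_n) \to t$, which is achievable for every $t \in \R$ since $a^{-1}-a \approx 2(1-a)$ near $a=1$; the resulting limits are exactly $\left(\begin{smallmatrix}1 & t \\ 0 & 1\end{smallmatrix}\right)$, the full unipotent group $U$. Both cases actually occur: the identity sequence ($c=0$) returns $A$, and $P_n = \left(\begin{smallmatrix}1 & n \\ 0 & 1\end{smallmatrix}\right)$ gives $c_n = n \to \infty$, hence $U$.

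The step I expect to require the most care is verifying that the degenerate limit is precisely $U$, i.e.\ checking both conditions (a) and (b) of subgroup convergence: condition (a) is the reparametrization $a_n \to 1$ just described, while condition (b) requires showing no sequence $g_n = P_n\,\mathrm{diag}(a_n,a_n^{-1})\,P_n^{-1}$ can converge to anything outside $U$. The latter holds because convergence forces the diagonal entries $a_n$ to a limit $a_\infty$, and if $a_\infty \neq 1$ the off-diagonal entry $c_n(a_n^{-1}-a_n)$ diverges, a contradiction; thus $a_\infty = 1$ and the limit lies in $U$. I would then record the hyperreal reformulation promised by the section: setting $\NS P = [P_n]$, $\NS G = \NS P\,\NS C\,\NS P^{-1}$ with $\NS C \le SL_2(\HR)$ the positive diagonal group, and $\NS c = [\lambda_n\mu_n]$, Theorem \ref{HRlimgp} gives $L = sh(\textrm{Fin}(\NS G))$; when $\NS c$ is finite the shadow is the Cartan, and when $\NS c$ is infinite the subgroup $\textrm{Fin}(\NS G)$ consists exactly of the elements with $a$ infinitesimally close to $1$, whose shadow is $U$.

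Finally I would identify the maximal configurations on $\R P^1$. The Cartan $A$ fixes the two points $[e_1]=[1:0]$ and $[e_2]=[0:1]$; since the stabilizer in $SL_2(\R)$ of two distinct points of $\R P^1$ is the full diagonal group, whose positive identity component is $A$, these two points lying on the projective line $\R P^1$ form a maximal configuration. The parabolic $U$ has the single eigendirection $\langle e_1\rangle$, hence fixes only $[e_1]$, and the stabilizer of a point is the Borel whose unipotent radical is $U$; thus one fixed point on the line $\R P^1$ is maximal. This matches, in the spirit of the bijection of Theorem \ref{maintri}, the two configurations to the two conjugacy limit groups and completes the classification.
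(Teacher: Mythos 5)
Your proposal is correct, and it reaches the same two limit groups and the same maximal configurations as the paper, but by a genuinely different and more self-contained route. The paper's proof disposes of uniqueness abstractly: it invokes the classification of one-parameter subgroups of $SL_2(\R)$ into elliptic, parabolic, and hyperbolic classes, rules out the elliptic case because those subgroups are circles rather than copies of $\R$, and then exhibits the single conjugating sequence $\big(\begin{smallmatrix}1&n\\0&1\end{smallmatrix}\big)$, observing that finiteness of the off-diagonal entry forces $a\to 1$ and yields the parabolic group. You instead prove exhaustiveness by direct computation: the Iwasawa reduction to upper-triangular conjugators (the $SL_2$ analogue of Lemma \ref{uppertri}, which the paper only states for $SL_3$) collapses the whole conjugating sequence into the single scalar $c_n=\lambda_n\mu_n$, and the dichotomy $c$ finite versus $c$ infinite then gives the two limits with no appeal to the classification of one-parameter subgroups. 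Your version also explicitly verifies both conditions (a) and (b) in the definition of subgroup convergence for the degenerate case, which the paper leaves implicit, and your hyperreal restatement in terms of $\mathrm{Fin}(\NS{G})$ anticipates Corollary \ref{2pt} (where the paper encodes the same dichotomy via the basis point $[1:\delta]$ with $\delta$ appreciable or infinitesimal, i.e.\ $\delta$ playing the role of $1/\NS{c}$). What the paper's approach buys is brevity and a conceptual link to the elliptic/parabolic/hyperbolic trichotomy; what yours buys is a complete, elementary argument that simultaneously proves existence and exhaustiveness of the two limits. The configuration identification at the end agrees with the paper's.
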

\begin{proof} 
There are three conjugacy classes of 1-parameter subgroups in $SL_2(\R)$  : elliptic, parabolic, and hyperbolic.  (See  \cite{Carter}.) Elliptic subgroups are isomorphic to $\mathbb{S}^1$, so any subgroup isomorphic to $\R$ must be hyperbolic or parabolic. 
%  We first claim any abelian subgroup, $G$, of $SL_2(\R)$ isomorphic to $\R$ must have real weights.  Suppose $G$ has complex weights.  Then $G$ must have a pair of complex weights $z, \bar{z}$  so that the determinant is $1= z \bar{z} = |z|$, or, $z=  \lambda  e^{i \theta}$, and $\bar{z}= \frac{1}{\lambda}e^{-i \theta}$.   This means that $G \cong 
%\big(\begin{smallmatrix}
%\lambda e^{i \theta}&0\\ 0& \frac{1}{\lambda} e^{-i \theta}
%\end{smallmatrix} \big) \cong S^1$.  But we assumed $G \cong \R $, and $\R $ is not compact, so all the weights must be real. \\
%Since $G$ is abelian and hence solvable, by Lie's theorem for Lie algebras (see for example \cite{Hum}, p.16), there is a basis relative to which all of the matrices in the Lie algebra, $\mathfrak{g}$, are upper triangular. Transferring this information to the Lie group via the exponential map, we may choose a basis for $G$ relative to which all of the matrices are upper triangular, with real weights on the diagonal. \\
% If the coordinate-wise projection in the Lie algebra onto the diagonal is 1 dimensional, $G$ is conjugate to the diagonal Cartan subgroup.  If the projection in the Lie algebra onto the diagonal is zero dimensional, then $G$ is  unipotent.  Since these are the all mutually exclusive possibilities for a one-dimensional upper triangular matrix subgroup of $SL_2 ( \R)$, these are all possible limits of the diagonal Cartan subgroup. \\
 
 Conjugate by the sequence of projective transformations  as $n \to \infty$: 
$$ \displaystyle 
 \big(\begin{smallmatrix}
1&n\\ 0& 1
\end{smallmatrix} \big)
\big(\begin{smallmatrix}
a&0\\ 0& \frac{1}{a}
\end{smallmatrix} \big)
\big(\begin{smallmatrix}
 1 &n\\ 0& 1
\end{smallmatrix} \big)^{-1}
=
\big(\begin{smallmatrix}
a &n (a - \frac{1}{a}) \\ 0& \frac{1}{a}
\end{smallmatrix} \big)
\to 
\big(\begin{smallmatrix}
1&t\\ 0& 1
\end{smallmatrix} \big).
$$
Since we want the conjugacy limit to be finite, i.e., we want $n (a - \frac{1}{a})$ to converge to some $t \in \R$, we need $a \to 1$.  Since $a \in \R$ is arbitrary, the limit is a group  where $t$ is any real number.  

The diagonal Cartan subgroup preserves the maximal configuration of 2 fixed points an appreciable distance apart on a projective line.  Applying this sequence of projective transformations identifies the points in the limit, so that the limit configuration consists of one point on a projective line, which is the maximal configuration preserved by the parabolic group. 
 \qed
 \end{proof}

Conjugate the Cartan subgroup (given in the usual basis $\{[1:0], [0:1] \})$, by a hyperreal transformation to change to the basis $\{ [1:0], [1: \delta] \}$. Define \\
$$ \NS G (\delta):=\{ \displaystyle 
 \big(\begin{smallmatrix}
1&1\\ 0& \delta
\end{smallmatrix} \big)
\big(\begin{smallmatrix}
a&0\\ 0& \frac{1}{a}
\end{smallmatrix} \big)
\big(\begin{smallmatrix}
 1 &1\\ 0& \delta
\end{smallmatrix} \big)^{-1}
=
\big(\begin{smallmatrix}
a &\frac{1}{\delta}(a - \frac{1}{a}) \\ 0& \frac{1}{a}
\end{smallmatrix} \big) \}. 
$$
Set $G(\delta) =sh(\textrm{Fin}(\NS{G}(\delta))) \leq SL_2(\R)$.

\begin{corollary}\label{2pt}   If $\delta$ is appreciable, then $G(\delta)$ is hyperbolic.  If $\delta$ is infinitesimal, then $G(\delta)$ is parabolic. 
\end{corollary}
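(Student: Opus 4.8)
The plan is to work directly from the explicit parametrization of $\NS G(\delta)$ given just above the corollary and to compute $sh(\mathrm{Fin}(\NS G(\delta)))$ by determining, in each regime for $\delta$, exactly which values of the hyperreal parameter $a>0$ produce a matrix with finite entries. A general element has the form $\bigl(\begin{smallmatrix} a & \frac{1}{\delta}(a-\frac1a) \\ 0 & \frac1a\end{smallmatrix}\bigr)$, so the diagonal entries are finite precisely when $a$ is appreciable (finiteness of $a$ and of $\frac1a$ rules out $a$ infinite or infinitesimal), and the only remaining question is the behavior of the off-diagonal entry. Once the finite part is identified I would take shadows entrywise, using that $sh:\mathbb F\to\R$ is a ring homomorphism, and then read off whether the resulting real group is hyperbolic or parabolic.

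First, in the appreciable case: if $\delta$ is appreciable then $\frac1\delta$ is appreciable, so the off-diagonal entry $\frac1\delta(a-\frac1a)$ is finite exactly when $a-\frac1a$ is finite, which holds for every appreciable $a$. Taking shadows, the diagonal entries become $\bar a:=sh(a)$ and $1/\bar a$ with $\bar a$ an arbitrary positive real, and the off-diagonal becomes $\frac{1}{sh(\delta)}(\bar a-\frac1{\bar a})$. The resulting real group is precisely the image of the diagonal Cartan under the \emph{real} conjugation by $\bigl(\begin{smallmatrix}1&1\\0&sh(\delta)\end{smallmatrix}\bigr)$; since its elements have distinct real eigenvalues $\bar a,\,1/\bar a$, it is a hyperbolic $1$-parameter subgroup.

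Next, in the infinitesimal case: now $\frac1\delta$ is infinite, so finiteness of $\frac1\delta(a-\frac1a)$ forces $a-\frac1a$ to be comparable to $\delta$. Writing $a=1+\eta$ with $\eta$ infinitesimal and expanding $a-\frac1a=2\eta+O(\eta^2)$, I would show this forces $\eta$ to be of order $\delta$, so that $sh(a)=sh(1/a)=1$ while $t:=sh\bigl(\frac1\delta(a-\frac1a)\bigr)$ sweeps out all of $\R$ as $\eta/\delta$ varies. Taking shadows collapses the diagonal to the identity and leaves the off-diagonal free, yielding exactly the parabolic group. Alternatively, once $G(\delta)$ is known to be a $1$-parameter subgroup isomorphic to $\R$, one may simply invoke the preceding theorem, which lists only the hyperbolic and parabolic groups as conjugacy limits, and match on eigenvalues.

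The main obstacle is the bookkeeping in the infinitesimal case: one must verify that the finite part is genuinely nonempty and attains \emph{every} real value of $t$, not merely $t=0$. This requires checking that $a$ can approach $1$ at exactly the rate $\delta$ -- faster gives an infinitesimal off-diagonal entry and hence the identity, slower gives an infinite entry that is excluded from $\mathrm{Fin}$. Making the claim ``$a-\frac1a$ is of order $\delta$'' precise through the expansion around $a=1$, and confirming surjectivity onto $t\in\R$, is the one place where a careful hyperreal estimate is needed; everything else reduces to the homomorphism property of $sh$ on $\mathbb F$.
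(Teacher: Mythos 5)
Your proposal is correct and follows essentially the same route as the paper: identify $\mathrm{Fin}(\NS G(\delta))$ from the explicit matrix form (appreciable $a$ when $\delta$ is appreciable, $a$ infinitesimally close to $1$ at rate $\delta$ when $\delta$ is infinitesimal) and take shadows entrywise. The paper's proof is terser; your added care about surjectivity onto all $t\in\R$ in the infinitesimal case fills in a detail the paper leaves implicit, but it is not a different argument.
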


\begin{proof}
%Let $p,q \in \HR P^1$ be distinct points, and define $\NS{G}(p,q) = \{ \NS{A} \in SL_2 ( \HR) | \NS{A}p = p \text{ and } \NS{A}q = q \}$.  Recall $Sh ( \NS{G}(p,q)) = \textrm{Fin}(\NS{G}(p,q))/ \NS{I}$, where $\NS{I}$ is the subgroup of infinitesimal transformations. 
% So, $Sh (\NS{G}(p,q))\cong \{ B \in SL_2 ( \R) | \exists \NS{A} \in \NS{G} (p,q) \text{ such that } \NS{A}-B \text{ is infinitesimal } \}$.    
If $\delta$ is infinitesimal, the finite part of $\NS G(\delta)$ has $a$ is infinitesimally close to 1, so that the upper right entry is finite.  Then $sh(a)=1$, so $G(\delta)$ is conjugate to the parabolic group,  and $G(\delta)$ acts on $\R P^1$ with one fixed point. 

If $\delta$ is appreciable, then $G(\delta)$ is conjugate to a group of hyperbolic projective transformation by a real matrix, and $G(\delta)$ acts on $\R P^1$ with two fixed points. \qed
\end{proof}

\section{Non-Standard Triangles: Proof of Theorem \ref{mainNS}}\label{inf_tri}%%%%%%%%%%%%%%%%%%%%%%%%%%%%%
So far, we have shown every conjugacy limit group of $C$ is conjugate to an element of $\mathcal{Q}$, and determined by a characteristic triangle class in $\mathcal{TQ}$ (theorem \ref{maintri}).  In this section, we establish the bijection between conjugacy limit groups and \emph{equivalence classes of nonstandard triangles}, given as a partition in the table in theorem \ref{mainNS}.   Before proving the table gives a partition, we define $\alpha$.

\phantomsection \label{setup}
A nonstandard triangle may be built from a nonstandard 1-simplex as follows.  Consider $\HR P^2$ with the positive scalar curvature metric inherited from the sphere. This metric is not preserved by projective transformations.   
Three non collinear points, $p,q,x \in \HR P^2$, determine a nonstandard triangle, $\Delta(p,q,x)$.  We may assume they satisfy the following labeling conditions.  The length of the shortest altitude is measured from the point $x$.  Let $\NS H \cong \HR P^1$ be the line containing $p$ and $q$, and $y \in \NS H$ be the foot of the altitude measured from $x$.   Assume $d(y,p) \leq d(y,q)$, and without loss of generality,  assume $p,q,x,y$ have coordinates $p=[1:0], q = [1:\delta], y = [1: \varepsilon] \in \NS H$ and $x=[1:\varepsilon : \eta]$. The shortest altitude is measured from $x$, so $0 \leq |\eta| \leq  |\delta|$ and $0 \leq |\varepsilon| \leq |\delta|$. In the remainder of this section, we denote by $\NS G := \NS G (\delta) \leq SL_2(\HR)$ the group preserving $p$ and $q$, and by $\hat{\NS G} := \hat{\NS G }(p,q,x)\leq SL_3(\HR)$ the group preserving $p,q$ and $x$.  Set $G = sh (\textrm{Fin} ( \NS G))$ and  $\hat G = sh (\textrm{Fin} ( \hat{\NS G}))$

 \begin{center}
 \begin{SCfigure}[][h]
 \includegraphics[scale=0.25]{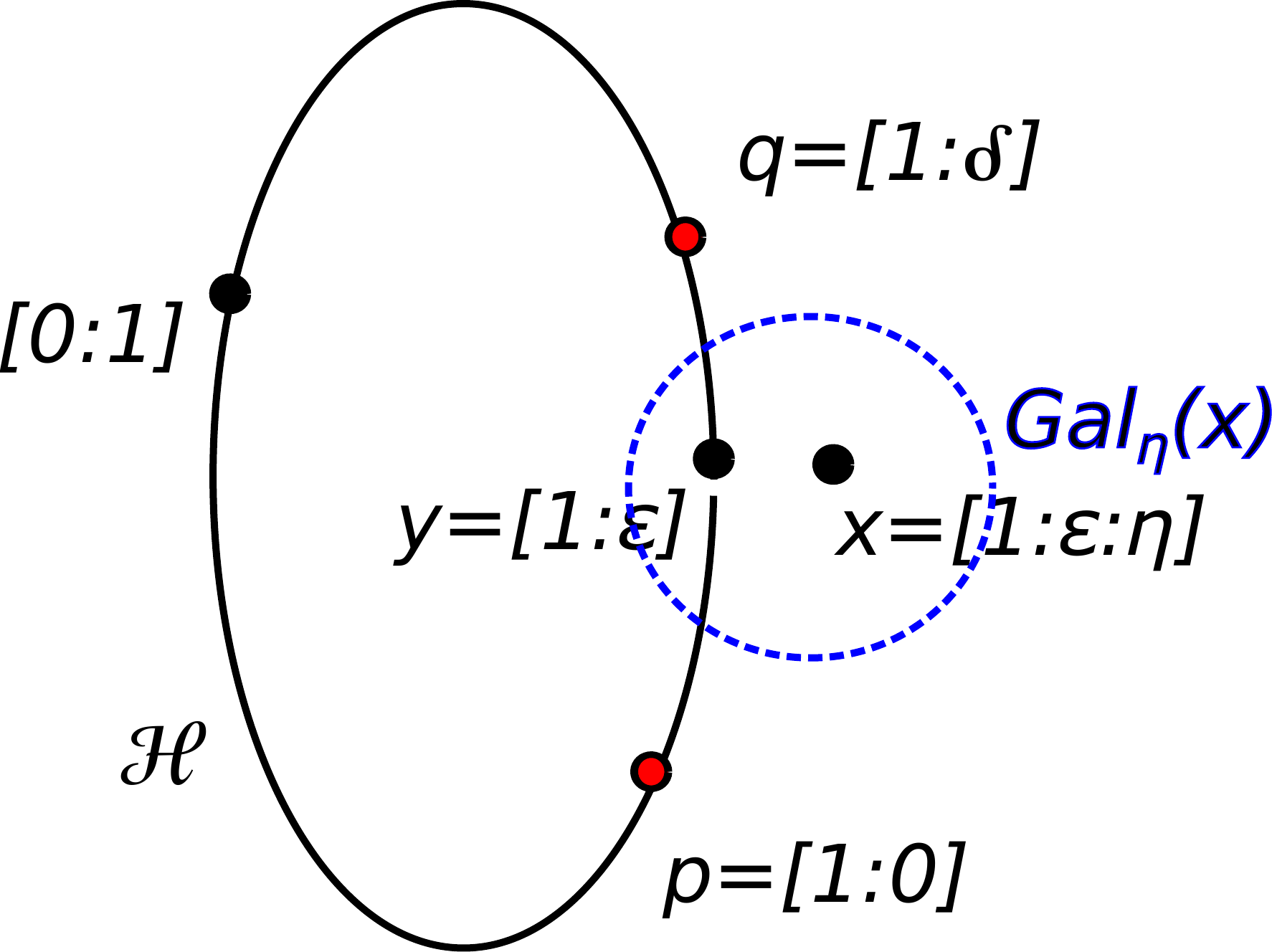}\label{induct}
 \caption{Adding a point to a 1-simplex }
\end{SCfigure}
 \end{center}

\begin{lemma}\label{NSeqrel}
 The table in theorem \ref{mainNS} is a partition on the set of nonstandard triangles, where $\alpha = \frac{\varepsilon \delta}{\eta}$. 
% If $\NS{T}$ and $\NS{S}$ are two nonstandard triangle configurations in different equivalence classes, then $sh(\NS{T}) \neq sh(\NS{S})$ as one of the seven degenerate triangle configurations.
 \end{lemma}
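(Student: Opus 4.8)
The plan is to reduce the whole classification to the three hyperreal parameters $\delta,\varepsilon,\eta$ of the normal form $p=[1:0],\ q=[1:\delta],\ y=[1:\varepsilon],\ x=[1:\varepsilon:\eta]$, and to read off the magnitude class (infinitesimal, appreciable, or infinite) of each of the three sides, the three angles, and of $\alpha=\varepsilon\delta/\eta$ directly from these. Working in the affine chart $(u,v)=(x_1/x_0,x_2/x_0)$, in which $p=(0,0)$, $q=(\delta,0)$, $x=(\varepsilon,\eta)$ and $y=(\varepsilon,0)$, a direct computation in the round metric gives: $pq$ has length $\asymp|\delta|$; $qx$ also has length $\asymp|\delta|$ (the labeling forces $|\varepsilon|\le|\delta|/2$, so $|\delta-\varepsilon|\asymp|\delta|$); and $px$ has length $\asymp\max(|\varepsilon|,|\eta|)$. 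The angle at $p$ is $\asymp\arctan(\eta/\varepsilon)$, the angle at $q$ is $\asymp\arctan(\eta/(\delta-\varepsilon))$, and the angle at $x$ is always appreciable (it is $\approx\pi$ when $|\eta|\ll|\varepsilon|$, and bounded below away from $0$ otherwise, because $y$ lies strictly interior to the base). Since the infinitesimal/appreciable dichotomy of an angle is preserved by any fixed invertible linear change of tangent vectors, these chart formulas give the true magnitude classes even though the gnomonic chart is not conformal away from $p$.

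Two structural facts then drop out. First, because $pq$ and $qx$ are always comparable to $|\delta|$, the number of infinitesimal sides is $0$ or $1$ when $\delta$ is appreciable and is exactly $3$ when $\delta$ is infinitesimal; it is never $2$. Second, $\angle p$ infinitesimal forces $\angle q$ infinitesimal (as $|\varepsilon|\le|\delta|$), so the number of infinitesimal angles is $0,1,$ or $2$, with $\angle x$ never contributing. I will then invoke the shortest-altitude normalization, which is the content making the table a partition: requiring that $pq$ be the longest side yields (to leading order) the constraints $\varepsilon^2+\eta^2\le\delta^2$ and $\eta^2\le\varepsilon(2\delta-\varepsilon)$. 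The second is the crucial one: it gives $\eta^2\lesssim\delta\varepsilon$, equivalently $\eta\lesssim\alpha$, which ties the class of $\alpha$ to the geometry and rules out the spurious combinations (for instance it forbids $0$ infinitesimal sides together with an infinitesimal $\alpha$).

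With this in hand I will run the case analysis, splitting first on the class of $\delta$ and then on that of $\alpha$. When $\delta$ is appreciable: $\varepsilon$ and $\eta$ both appreciable give $(0,0,\text{appreciable})=C$; $\varepsilon$ appreciable with $\eta$ infinitesimal gives $(2,0,\text{infinite})$, covered by $N_3$; and $\varepsilon$ infinitesimal forces $\eta$ infinitesimal and exactly one infinitesimal side, whereupon $\alpha$ finite gives $F$ and $\alpha$ infinite gives $N_3$. When $\delta$ is infinitesimal all three sides are infinitesimal and the class of $\alpha$ alone is the discriminant: $\alpha$ appreciable gives $N_1$ (always with two infinitesimal angles), $\alpha$ infinitesimal gives $N_2$ (with any number $0,1,2$ of infinitesimal angles, which is why that entry is $\ast$), and $\alpha$ infinite gives $N_3$. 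Tabulating these outcomes reproduces exactly the five rows, and the two discriminants -- the number of infinitesimal sides in $\{0,1,3\}$ together with the class of $\alpha$ in $\{\text{appreciable},\text{infinitesimal},\text{infinite}\}$ -- show the five conditions are pairwise disjoint and exhaustive, which is the asserted partition. (The case $\delta$ infinite is disposed of by the same computation after removing it with the labeling or a projective normalization, or handled directly via the $\arctan$ asymptotics.)

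The main obstacle is the honest metric bookkeeping in the non-infinitesimal regime: when $\delta$ or $x$ is appreciable the round metric is not conformal in the gnomonic chart, so I must justify that the chart angles and the spherical angles share a magnitude class, and I must use the precise shortest-altitude inequalities -- not merely $|\varepsilon|,|\eta|\le|\delta|$ -- to exclude the combinations that would break disjointness. A secondary point needing care is the reading of ``finite'' in the table: for $N_1$ it must mean appreciable (an infinitesimal $\alpha$ with three infinitesimal sides is $N_2$), whereas for $F$ both appreciable and infinitesimal $\alpha$ genuinely occur; checking this is precisely what makes the $N_1/N_2$ split well defined.
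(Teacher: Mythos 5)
Your proposal is correct and follows essentially the same route as the paper: normalize to $p=[1:0]$, $q=[1:\delta]$, $x=[1:\varepsilon:\eta]$, show the counts of infinitesimal sides and angles lie in $\{0,1,3\}$ and $\{0,1,2\}$ respectively with the one-implies-one coupling, and resolve the only ambiguous count $(2,3)$ by the class of $\alpha=\varepsilon\delta/\eta$ --- the paper simply defers the side/angle/order computations to Proposition \ref{2to3}, which you carry out in place. Your two added observations are both sound and worth keeping: that disjointness genuinely needs the shortest-altitude inequality $\eta^2\le\varepsilon(2\delta-\varepsilon)$ rather than just $|\varepsilon|,|\eta|\le|\delta|$, and that the entry ``finite'' in the $N_1$ row must be read as ``appreciable'' (as in the paper's second table) for the $N_1$/$N_2$ rows to be disjoint.
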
 
 \begin{proof}   We show that every nonstandard triangle is in exactly one row of the table. Consider the first two columns of the table: the number of infinitesimal sides and angles. A nonstandard triangle has $0,1$ or $2$ infinitesimal angles, and $0,1$ or $3$ infinitesimal sides.  If a nonstandard triangle has exactly one infinitesimal side (angle), then it must have at least one infinitesimal angle (side).  It is straightforward to see all 7 possibilities for a nonstandard triangle with 0,1 or 2 infinitesimal angles, and 0,1, or 3 infinitesimal sides are listed in the table.  Since $|\delta| \geq  |\eta| $,  if the number of infinitesimal angles and sides is not 2 and 3 respectively, it is easy to check the order of $\frac{\varepsilon \delta}{\eta} $ is determined by the number of infinitesimal sides and angles, these details appear as part of proposition \ref{2to3}.  
 
It remains to show that there is no overlap in the rows.  Examining the first two columns, we see the only repeat is a nonstandard triangle with 3 infinitesimal sides and 2 infinitesimal angles, which  appears three times.  The row for a nonstandard triangle with 2 infinitesimal angles and three infinitesimal sides is determined by whether $\frac{\varepsilon \delta}{\eta}$ is finite, infinitesimal, or infinite. %This is the only repeat of number of infinitesimal sides and angles.   
   %The shadow of a nonstandard triangle with 0 infinitesimal sides and 0 infinitesimal angles is the degenerate triangle configuration $C$. The shadow of a nonstandard triangle with 1 infinitesimal side and 1 infinitesimal angle is the degenerate triangle configuration $F$.  The shadow of a nonstandard triangle with 2 infinitesimal sides and 3 infinitesimal angles is the degenerate triangle configuration $N_1$. 
 %The shadow is determined by the number of infinitesimal sides and infinitesimal angles in a nonstandard triangle.  So it remains only to differentiate between triangles with 2 infinitesimal angles and 3 infinitesimal sides using the ratio $\frac{h}{\theta}$.   All nonstandard triangles with 2 infinitesimal angles and 3 infinitesimal sides have a shadow consisting of one line with one point. 
 \qed
 \end{proof}

%Measure the distance from each point to the nonstandard line containing the other points.  Let $\eta$ be the smallest such distance, and $x$ the point from which the distance $\eta$ is measured.  Let $p,q$ be the other  points of the triangle, $\NS{H}$ the $\HR P^1$ containing $p$ and $q$, and $\delta= \textrm{dist}(p,q)$. Let $y \in \NS{H}$ be the closest point to $x$.  We may assume $y$ is closer to $p$ than $q$, and  $p= [1:0]$. Let $\varepsilon$ be the distance from $p$ to $y$, so $y= [ 1: \varepsilon]$. The \emph{link} of $x$ is $\NS{L}(x) \cong \HR P^1$, the set of all lines through $x$.  Recall $\textrm{Gal}_{\eta}(x) = \{ r \in \HR : x -r \in \eta \cdot \mathbb{F} \}$ from definition \ref{hyperreal}. 
%Also $\NS{G}: = \NS{G} (\delta) \leq \textrm{Aut}(\NS H) \cong PGL_2 (\HR)$, and set $G = sh ( \textrm{Fin}( \NS{G}))$. %Let $\NS{H}_\eta(x) \leq \textrm{Fin}(\NS{G})$ be the subgroup preserving $\textrm{Gal}_{\eta}(x) \cap \NS{H}$. 
 %The action of $\NS{G}$ on $\NS{H}$ is \emph{slowed down}  if $sh(\NS{H}_\eta(x)) \subsetneq sh(\NS{G})$.  This is equivalent the action on the orbit closure $sh(\NS H) \subset \R P^2$ being decompressed in the limit $sh(\hat{\NS G}( p,q,\left[ \begin{smallmatrix} 0\\0\\1 \end{smallmatrix}\right])) \to sh (\hat{\NS G} ( p,q,x))$.
Recall a projective transformation is appreciable if it maps an appreciable basis to an appreciable basis.   Let $\NS{X} \subset \HR P^n$ be a subspace, and extend the usual basis of $\NS{X}$ to the usual basis of $\HR P^n$.  We say a group $\NS{ F}$ \emph{acts finitely} on $\NS{X}$, if $f |_{\NS{X}}$ is a finite transformation, for all $f \in \NS{F}$. 

Let $[v] \in \HR P^n$.    The \emph{link} of $[v]$ is $\NS{L}(v) \cong \HR P^{n-1}$, the set of all lines through $[v]$. Given a projective basis $\{ [e_0] ,[ e_1], ...,[ e_{n+1}] \}$ for $\HR P^n$ with $[e_0]=[v]$, a projective basis for $\NS{L}(v) \cong \HR P^{n-1}$ consists of lines $\{ \langle{ [v],[e_i]}\rangle | 1 \leq i \leq n+1 \}$. Recall a basis, $\NS{B}$, for $\NS{L}(v)$ is appreciable if $sh(\NS{B})$ is a basis for $L(v): = sh(\NS{L}(v))$. 
In $\HR P^2$ a basis for $\NS L (v)$ is appreciable if and only if the angles between the projective lines in the basis for $\NS{L}(v)$ are appreciable.

 Let $\NS{G}= \NS G( \delta) \leq SL_2 (\HR)$ act on $\NS{H}$, and let $x \in \HR P^2 - \NS H$ as in figure \ref{induct}. The action of $\NS{G}$ on $\NS{L}(x)$ is defined as follows.  In projective space, every pair of lines intersect in a point, so every line in $\NS{L}(x)$ intersects $\NS{H}$ in a point, $z$.  Thus $\NS{L}(x) = \{ \langle{x,z}\rangle : z \in \NS{H} \}$, and the action of $\textrm{Fin}(\NS{G})$ on $\NS{L}(x)$ is given as $\langle{x,z}\rangle \mapsto \langle{x, g(z)}\rangle$, for $g \in \NS{G}$.  %As before, we say $\NS{G}$ \emph{acts finitely} on $\NS{L}(x)$ if the image of an appreciable basis is an appreciable basis.   
 We will show the conjugacy limit group that preserves the shadow of the nonstandard triangle is controlled by the action on the link of the new point, $\NS{L}(x)$. 
 
  Recall $\textrm{Gal}_{\eta}(x) = \{ r \in \HR : x -r \in \eta \cdot \mathbb{F} \}$ from definition \ref{hyperreal}. 

\begin{lemma}\label{link1} $\textrm{Fin}(\NS{G})$ preserves $\textrm{Gal}_{\eta}(y)$ if and only if $\textrm{Fin}(\NS{G})$ acts finitely on $\NS{L}(x)$. 
\end{lemma}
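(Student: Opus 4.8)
The plan is to transport both conditions onto the link $\NS L(x)\cong \HR P^1$ through the perspectivity from $x$, and to show that each is governed by the finiteness of a single hyperreal quantity.

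First I would fix coordinates. On $\NS H\cong\HR P^1$ use the affine coordinate $r$ with $p,q,y$ at $r=0,\delta,\varepsilon$; then $\textrm{Fin}(\NS G)$ acts by the finite M\"obius maps $m_a$ coming from $g_a=\left(\begin{smallmatrix} a & \frac{1}{\delta}(a-\frac1a)\\ 0 & \frac1a\end{smallmatrix}\right)$, which fix $r=0$ and $r=\delta$ (these are precisely the elements of $\textrm{Fin}(\NS G)$, finite by definition). The perspectivity $\pi:\NS H\to\NS L(x)$, $z\mapsto\langle x,z\rangle$, is projective; taking $z=[1:r:0]$ and $x=[1:\varepsilon:\eta]$, the line $\langle x,z\rangle$ has coordinates $[-\eta r:\eta:r-\varepsilon]$, so in the appreciable (angle) coordinate $\theta=(r-\varepsilon)/\eta$ on $\NS L(x)$ the map $\pi$ is simply $r\mapsto\theta=(r-\varepsilon)/\eta$. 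Two consequences: the induced action on the link is $T_a:=\pi\, m_a\,\pi^{-1}$, and $\textrm{Gal}_\eta(y)=\pi^{-1}(\mathbb F)$ is exactly the finite-angle galaxy on $\NS L(x)$. Thus ``$\textrm{Fin}(\NS G)$ acts finitely on $\NS L(x)$'' means every $T_a$ is finite, while ``$\textrm{Fin}(\NS G)$ preserves $\textrm{Gal}_\eta(y)$'' means every $T_a$ preserves the finite galaxy $\mathbb F$ on the link; here $\{T_a\}$ is a one-parameter group fixing the two points $\langle x,p\rangle$ and $\langle x,q\rangle$.

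Next I would run the computation that links both sides to one quantity. Conjugating $g_a$ by $\Pi=\left(\begin{smallmatrix}\eta&0\\-\varepsilon&1\end{smallmatrix}\right)$ gives $T_a$ with $\det T_a=1$; using $a,\frac1a$ finite and $c:=\frac1\delta(a-\frac1a)$ finite (the definition of $\textrm{Fin}(\NS G)$) together with the labeling bounds $|\varepsilon|,|\eta|\le|\delta|$, three of the four entries of $T_a$ turn out to be automatically finite, so $T_a$ is finite if and only if the single entry $K:=\frac{\varepsilon}{\eta}\bigl(a-\frac1a\bigr)\bigl(1-\frac{\varepsilon}{\delta}\bigr)$ is finite. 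On the other side, writing $r=\varepsilon+\eta\theta$ one computes $\frac{m_a(y)-y}{\eta}=\dfrac{K}{a+c\varepsilon}$, so the displacement of $y$ measured in units of $\eta$ is controlled by the same $K$. Since $m_a$ fixes $p$ and $q$, its pole lies outside the segment $[p,q]$, hence (for the infinitesimal scale $\eta$ relevant to a degenerate triangle) outside $\textrm{Gal}_\eta(y)$; therefore $m_a$ preserves $\textrm{Gal}_\eta(y)$ exactly when it moves $y$ by an amount $O(\eta)$ with appreciable distortion, i.e. exactly when $K$ is finite. Both conditions thus reduce to the finiteness of $K$ over all $a\in\textrm{Fin}(\NS G)$ (which, as $a-\frac1a$ ranges over finite values, is in turn governed by the order of $\alpha=\varepsilon\delta/\eta$, matching the table), and the equivalence follows.

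The step I expect to be the main obstacle is controlling the denominator, i.e. ruling out that a finite $T_a$ secretly carries a finite-angle line to the horizontal direction $\theta=\infty$, which would break preservation of the galaxy while keeping $T_a$ finite. This is exactly where the fixed points $p,q$ and the bounds $|\varepsilon|,|\eta|\le|\delta|$ must be used: they pin the pole of $m_a$ off the segment $[p,q]$ and force $a+c\varepsilon$ to be appreciable whenever $K$ is finite, so that preservation of $\textrm{Gal}_\eta(y)$ is genuinely two-sided and equivalent to finiteness of $K$. Handling this cleanly, rather than the routine bookkeeping of the entries of $T_a$, is the crux of the argument.
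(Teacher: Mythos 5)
Your proposal is correct and follows essentially the same route as the paper: both transfer the two conditions to the link via the perspectivity $z\mapsto\langle x,z\rangle$ and exploit the fact that angular displacement in $\NS{L}(x)$ is linear displacement on $\NS{H}$ divided by $\eta$ --- the paper does this softly by testing the action on an appreciable basis of three lines through points at distance $\eta$ from $y$, while you conjugate the matrix explicitly and isolate the one non-automatically-finite entry $K$, which is a sharper version of the same computation (and your worry about the denominator $a+c\varepsilon$ is legitimate and resolvable via $\det T_a=1$). One caution: the identification $\textrm{Gal}_\eta(y)=\pi^{-1}(\mathbb{F})$, and hence the equivalence ``$T_a$ finite $\iff$ $T_a$ preserves $\mathbb{F}$ $\iff$ $K$ finite,'' is only valid when $\eta$ is infinitesimal --- for appreciable $\eta$ the galaxy is all of $\NS{H}$ in the spherical metric the paper uses, whereas a hyperbolic $m_a$ sends a point with finite affine coordinate to $[0:1]$, so that case (where both sides of the lemma hold trivially) should be split off rather than folded into the coordinate argument.
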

 \begin{proof}  

Let  $\{ v_1= [1: \varepsilon + \eta], v_2=[1: \varepsilon -\eta], v_3= [1: \varepsilon] \}$ be a basis for $\NS H$,  and set $\NS{B}= \{\langle{ x, v_i}\rangle : i=1,2,3\}$, a basis for $\NS{L}(x)$.   The basis $\NS{B}$ is appreciable since the lines $\langle {x,v_1}\rangle$ and $\langle{x,v_3}\rangle$ form a $45^{\circ}$ isosceles triangle with $\NS{H}$, and $\langle{x,v_2}\rangle$ is perpendicular to $\NS H$. The distance a point $z \in \NS H$ is moved  by $g \in \NS G$ is $|z -  g.z| $.  The distance a point is moved in $\NS{L}(x)$ is $\angle (\langle{x, z} \rangle, \langle{x, (g.z)})\rangle \approx \frac{ | z - g.z |}{\eta}$.  

   So $\textrm{Fin}(\NS{G})$ acts finitely on $\NS{L}(x)$ if and only if $\textrm{Fin}(\NS{G})$ acts finitely on $\NS{B}$, the basis for $\NS L(x)$.  But $\textrm{Fin}(\NS{G})$ acts finitely on $\NS B$ if and only if $\textrm{Fin}(\NS{G})$ keeps the angles between the lines in $\NS B$ appreciable, i.e. if $\textrm{Fin}(\NS{G})$ moves the points $v_i \in \NS H$ a distance of at most order $\eta$.  Thus $\textrm{Fin}(\NS{G})$ acts finitely on $\NS L (x)$ if and only if $\textrm{Fin}(\NS{G})$ preserves $\textrm{Gal}_{\eta}(y)$.  \qed
 \end{proof} 
  
Two nonzero hyperreals $\alpha, \beta \in \HR$ have the same \emph{order} if and only if $\frac{\alpha}{\beta}$ is appreciable.  We denote this $\alpha \approx \beta$. 

\begin{lemma}\label{link} 
%Let $p,q  \in \NS{H}=  \HR P^1$ be distinct points. Let $x \in \HR P^2 - \NS{H}$. If $\eta$ is infinitesimal, and $\mathcal{G}_{\eta}(x)$ contains $p$ or $q$, then the action of $\NS{G}$ on $lk(x) \subset \HR P^2$ is finite.  If $\mathcal{G}_{\eta}(x)$ contains neither $p$ or $q$,  the action of $\NS{G}$ on $lk(x)$ is infinite.  In particular, 

%Let $\NS G := \NS G (\delta) \leq SL_2 (\HR)$. Then 
$\textrm{Fin}(\NS{G})$ moves a point in $\textrm{Gal}_{\eta}(y)$ a distance of at most order $\varepsilon \delta$. 
\end{lemma}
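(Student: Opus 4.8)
The plan is to compute directly with the explicit form of $\textrm{Fin}(\NS G)$ and its action on $\NS H$. By Corollary \ref{2pt}, an element of $\NS G=\NS G(\delta)$ fixing $p=[1:0]$ and $q=[1:\delta]$ has the shape $g=\left(\begin{smallmatrix} a & c\\ 0 & 1/a\end{smallmatrix}\right)$ with $a>0$. Membership $g\in\textrm{Fin}(\NS G)$ forces every entry to be finite: since both $a$ and $1/a$ are finite, $a$ must be \emph{appreciable}, and the off-diagonal entry $c$ is finite. Writing a point of $\NS H$ as $z=[1:w]$, a short computation gives $g.z=[1:w']$ with $w'=\tfrac{w}{a(a+cw)}$, and, using that $g$ fixes $q$ (equivalently $ac\delta=1-a^2$), the displacement collapses to
$$ w'-w \;=\; \frac{c\,w\,(\delta-w)}{a+cw}. $$
As a sanity check this vanishes at $w=0$ and $w=\delta$, the fixed points $p,q$. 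Following the convention of Lemma \ref{link1}, the distance $z$ is moved is $|z-g.z|=|w'-w|$.

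Next I would evaluate at the foot of the altitude $y=[1:\varepsilon]$ and estimate factor by factor. At $w=\varepsilon$,
$$ |w'-w| \;=\; \frac{|c|\,\varepsilon\,|\delta-\varepsilon|}{|a+c\varepsilon|}. $$
Here $|c|$ is finite; the labeling condition $d(y,p)\le d(y,q)$ gives $\varepsilon\le\delta/2$, so $|\delta-\varepsilon|$ is of order $\delta$; and the denominator $|a+c\varepsilon|$ is appreciable, because $a$ is appreciable while $c\varepsilon$ is finite-times-infinitesimal, so $a+c\varepsilon\approx a$ cannot be infinitesimal. Multiplying, $|w'-w|$ is a finite multiple of $\varepsilon\cdot\delta$, i.e.\ of order at most $\varepsilon\delta$. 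This proves the claim for $y$; the displacement can be strictly smaller (e.g.\ when $c$ is infinitesimal, or $a\approx1$), which is why the statement reads ``at most.''

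The step I expect to be most delicate is twofold. First, controlling the denominator $a+cw$ uniformly: this rests on the two constraints that $\textrm{Fin}(\NS G)$ imposes, namely that $a$ is appreciable and that $c=\tfrac{1-a^2}{a\delta}$ is finite. The second constraint does the real work — when $\delta$ is infinitesimal it forces $1-a^2$ to have order at most $\delta$ (so $a$ is infinitesimally close to $1$), and this is exactly what keeps the numerator at order $\varepsilon\delta$ rather than letting it grow. Second, one must be careful which point of $\textrm{Gal}_\eta(y)$ is meant: the clean order-$\varepsilon\delta$ bound is the displacement of the foot $y$ itself (and of any point within order $\varepsilon$ of it), whereas a point at the rim of the galaxy, $w=\varepsilon+\eta f$ with $f$ finite, is moved by order $\max(\varepsilon,\eta)\,\delta$. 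The estimate is therefore sharpest, and is applied, at the center $y$, whose displacement is the quantity that matters downstream: combined with Lemma \ref{link1}, where the distance moved on $\NS L(x)$ equals the $\NS H$-displacement divided by $\eta$, dividing the order-$\varepsilon\delta$ displacement of $y$ by $\eta$ yields an order-$\alpha$ displacement on the link $\NS L(x)$ with $\alpha=\tfrac{\varepsilon\delta}{\eta}$, which drives the rest of the classification.
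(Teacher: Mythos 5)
Your proof is correct and takes essentially the same route as the paper: both write an element of $\textrm{Fin}(\NS G)$ explicitly with the off-diagonal entry constrained by fixing $q=[1:\delta]$ (your $c=\tfrac{1-a^2}{a\delta}$ is the paper's $\tfrac{1}{\delta}(a-\tfrac1a)=2t$ up to sign), compute the displacement of $y=[1:\varepsilon]$ in closed form, and estimate it factor by factor as order $\varepsilon\delta$. Your version is, if anything, slightly more careful than the paper's on two points it glosses over --- why the denominator $a+c\varepsilon$ is appreciable, and the distinction between the displacement of $y$ itself and of other points of $\textrm{Gal}_\eta(y)$ --- so no changes are needed.
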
 
\begin{proof}  

Recall
$$ \NS{G}(\delta)=\{ \displaystyle 
\big(\begin{smallmatrix}
1&1\\ 0& \delta
\end{smallmatrix} \big)
\big(\begin{smallmatrix}
a&0\\ 0& \frac{1}{a}
\end{smallmatrix} \big)
\big(\begin{smallmatrix}
 1 &1\\ 0& \delta
\end{smallmatrix} \big)^{-1}
=
\big(\begin{smallmatrix}
a &\frac{1}{\delta} (a - \frac{1}{a}) \\ 0& \frac{1}{a}
\end{smallmatrix} \big) \}. 
$$
 In $\textrm{Fin}(\NS{G})$,  we have $\frac{1}{\delta} (a - \frac{1}{a}) = 2t$, a finite hyperreal.  The action on $y= [ 1: \varepsilon]$ depends on $\eta, \delta, \varepsilon$, and we want to find the subgroup of $\textrm{Fin}(\NS{G})$ that preserves $\textrm{Gal} _{\eta}(y)$.  We have:
$$\big(\begin{smallmatrix}
a &\frac{1}{\delta} (a - \frac{1}{a}) \\ 0& \frac{1}{a}
\end{smallmatrix} \big)
[1: \varepsilon] = [ a + \frac{\varepsilon}{\delta} (a - \frac{1}{a}) : \frac{\varepsilon}{a} ] = [ 1: \frac{\frac{\varepsilon}{a} }{a+ \frac{\varepsilon}{\delta} (a - \frac{1}{a})}] .$$
%Solving the quadratic equation relating $a, \delta ,t$, gives $ a = \delta t + \sqrt{ 1 + ( \delta t ) ^2}$, and $a^2 = 2 a \delta t +1$.   
We want to find the distance $y$ is moved.  Using $ \frac{1}{a} - a = 2 t \delta$ we get
$$\big| \frac{\frac{\varepsilon}{a} }{a+ \frac{\varepsilon}{\delta} (a - \frac{1}{a})} - \varepsilon \big|=  \varepsilon 
\big|  \frac{\frac{1}{a} - a + 2 t \varepsilon}{a + 2 t \varepsilon} \big|= 2 t \varepsilon \big| \frac{\delta + \varepsilon}{a + 2t \varepsilon} \big|  \approx 2 t \varepsilon | \delta + \varepsilon | $$
%\frac{\varepsilon(1 - 2 a \delta t -1 - 2 a \varepsilon t)}{2 a \delta t + 1 + 2 a \varepsilon t}= \frac{\varepsilon(2 a \delta t - 2 a \varepsilon t)}{2 a \delta t + 1 + 2 a \varepsilon t } = \frac{\varepsilon (\varepsilon - \delta)}{\delta + \varepsilon - \frac{1}{2 t a}}$$
%$$\varepsilon \mapsto \frac{\frac{\varepsilon}{a}}{a + 2 t \varepsilon} = \frac{\varepsilon}{a^2 + 2 a t \varepsilon} = \frac{\varepsilon}{2 a \delta t + 2 a t \varepsilon}   =    \frac{\varepsilon}{4 t^2 ( \delta ^2 +    \varepsilon \delta)} . $$
%$$\textrm{So,    } \varepsilon \mapsto \frac{\frac{\varepsilon}{a}}{a+ \frac{\varepsilon}{\delta}(a- \frac{1}{a})} = \frac{\varepsilon}{\frac{\varepsilon}{\delta}(a^2-1)} \approx \frac{\varepsilon}{\frac{\varepsilon}{\delta}},$$
 %Thus $y$ is moved $\varepsilon a t ( 2 \delta + \varepsilon)$.  
 since $a \approx 1$ in $\textrm{Fin}(\NS G)$. 
 % and the last one since $\delta \geq \varepsilon \geq 0$. Thus $y$ is moved a distance of order $\varepsilon \delta$.
Since $0 < \varepsilon \leq \delta$ then $ \varepsilon \delta \leq \varepsilon ( \delta + \varepsilon) \leq 2 \varepsilon \delta$.  Therefore 
$$ 2 t \varepsilon | \delta + \varepsilon | \approx \varepsilon(\delta + \varepsilon)\approx \varepsilon \delta,$$
and $y$ is moved a distance of order $\varepsilon \delta$. 
 \qed
\end{proof}

\begin{corollary}\label{linkfin} $\textrm{Fin}(\NS{G})$ acts finitely on $\NS{L}(x)$ if and only if $\frac{\varepsilon \delta}{\eta}$ is finite.  Moreover, the action of $\textrm{Fin}(\NS{G})$ on $\NS{L}(x)$ is infinite if $\frac{\varepsilon \delta}{\eta}$  is infinite, and $ sh(\textrm{Fin}( \NS{G}))$ acts as the identity on $L(x)$ if $\frac{\varepsilon \delta}{\eta}$ is infinitesimal. 
\end{corollary}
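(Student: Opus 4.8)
The plan is to combine Lemmas \ref{link1} and \ref{link} with the angle--displacement estimate established inside the proof of Lemma \ref{link1}. Recall from that proof that if a point $z$ near $y$ is moved a distance $|z - g.z|$ in $\NS H$ by some $g \in \textrm{Fin}(\NS G)$, then the line $\langle x, z\rangle$ is moved an angular distance $\angle(\langle x,z\rangle,\langle x,g.z\rangle) \approx \frac{|z-g.z|}{\eta}$ in $\NS L(x)$. Thus the order of the angular displacement in $\NS L(x)$ equals the order of the linear displacement in $\NS H$ divided by $\eta$, and the whole corollary reduces to tracking a single order of magnitude through this dictionary.

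First I would read off the equivalence directly. By Lemma \ref{link1}, $\textrm{Fin}(\NS G)$ acts finitely on $\NS L(x)$ if and only if it preserves $\textrm{Gal}_\eta(y)$, i.e.\ if and only if it moves points near $y$ a distance lying in $\eta\cdot\mathbb{F}$. By Lemma \ref{link}, that displacement has order $\varepsilon\delta$. Hence it lies in $\eta\cdot\mathbb{F}$ precisely when $\frac{\varepsilon\delta}{\eta}$ is finite, which gives the stated equivalence.

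For the two \emph{moreover} clauses I would run the trichotomy on the order $\frac{\varepsilon\delta}{\eta}$ of the angular displacement. If $\frac{\varepsilon\delta}{\eta}$ is infinite, I choose an element of $\textrm{Fin}(\NS G)$ with appreciable parameter $t$, so that the estimate $2t\varepsilon|\delta+\varepsilon|\approx\varepsilon\delta$ from the proof of Lemma \ref{link} gives displacement of order exactly $\varepsilon\delta$; the corresponding angular displacement in $\NS L(x)$ then has order $\frac{\varepsilon\delta}{\eta}$, hence is infinite, so the action is infinite. If $\frac{\varepsilon\delta}{\eta}$ is infinitesimal, then for every $g\in\textrm{Fin}(\NS G)$ the parameter $t$ is finite, so the angular displacement is $\approx 2t\cdot\frac{\varepsilon\delta}{\eta}$, a finite hyperreal times an infinitesimal, hence infinitesimal; thus each $g$ restricted to $\NS L(x)$ is infinitesimally close to the identity, and passing to shadows shows that $sh(\textrm{Fin}(\NS G))$ acts trivially on $L(x)=sh(\NS L(x))$.

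The main obstacle is the bookkeeping in the infinitesimal case: I must be sure the conclusion holds for \emph{every} group element and not merely generic ones. The point to verify is that although the multiplier $2t$ varies with the element, it is finite throughout $\textrm{Fin}(\NS G)$, and a finite hyperreal times the infinitesimal $\frac{\varepsilon\delta}{\eta}$ is again infinitesimal; this is precisely the content of the explicit estimate $2t\varepsilon|\delta+\varepsilon|\approx\varepsilon\delta$ proved in Lemma \ref{link}, once one recalls that $t$ ranges over the finite hyperreals in $\textrm{Fin}(\NS G)$. With that observed, each element is infinitesimally close to the identity on $\NS L(x)$ before shadows are taken, so the shadow action on $L(x)$ is genuinely trivial.
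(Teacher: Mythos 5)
Your proposal is correct and follows essentially the same route as the paper, which likewise combines Lemma \ref{link} with the angle estimate $\angle(\langle x,z\rangle,\langle x,g.z\rangle)\approx |z-g.z|/\eta$ from the proof of Lemma \ref{link1} to conclude that points of $\NS{L}(x)$ are moved a distance of order $\frac{\varepsilon\delta}{\eta}$. Your added care in the two \emph{moreover} clauses --- exhibiting an element with appreciable $t$ to realize the bound in the infinite case, and noting that $t$ is finite for every element of $\textrm{Fin}(\NS{G})$ in the infinitesimal case --- fills in details the paper leaves implicit under ``the result follows.''
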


\begin{proof} By lemma \ref{link},  the distance $y \in \textrm{Gal} _{\eta}(x)$  is moved in $\NS{H}$ is of order $\varepsilon \delta $, and so by the proof of lemma \ref{link1},  $\textrm{Fin}(\NS{G})$ moves points in $\NS{L}(x)$ a distance of order $\frac{\varepsilon \delta}{\eta}$. %There is a homomorphism:
%\begin{center}
%\begin{tikzpicture}
%\node(A){ $\theta: \textrm{Fin}(\NS{G})$};
%\node[right of =A, node distance= 2cm](B){$\textrm{Fin}(\NS{G}),$};
%\node[ below of = A , node distance= 1cm](C){$\NS{H}$};
%\node[below of = B , node distance = 1 cm] (D){ $\NS{L}(x)$};
%\node[ right of =B, node distance= 3cm](E) {where $ \theta(A) = \frac{\varepsilon\delta}{\eta} A$.};
%\path(C) edge [loop above,  style ={-stealth}]  (C); 
%\path(D) edge [loop above,  style ={-stealth}]  (D); 
%\path(A) edge [ style ={-stealth}] (B); 
%\end{tikzpicture} 
%\end{center}
% The image of $\theta$ is finite if and only if $\frac{\varepsilon \delta}{\eta}$ is finite.  If $\frac{\varepsilon \delta}{\eta}$ is infinitesimal, then $sh(\textrm{Im}(\theta))$ is the identity. 
The result follows.  \qed
\end{proof}

\begin{figure}[h]
        \centering
        \begin{subfigure}[b]{0.3\textwidth}
                \includegraphics[width=\textwidth]{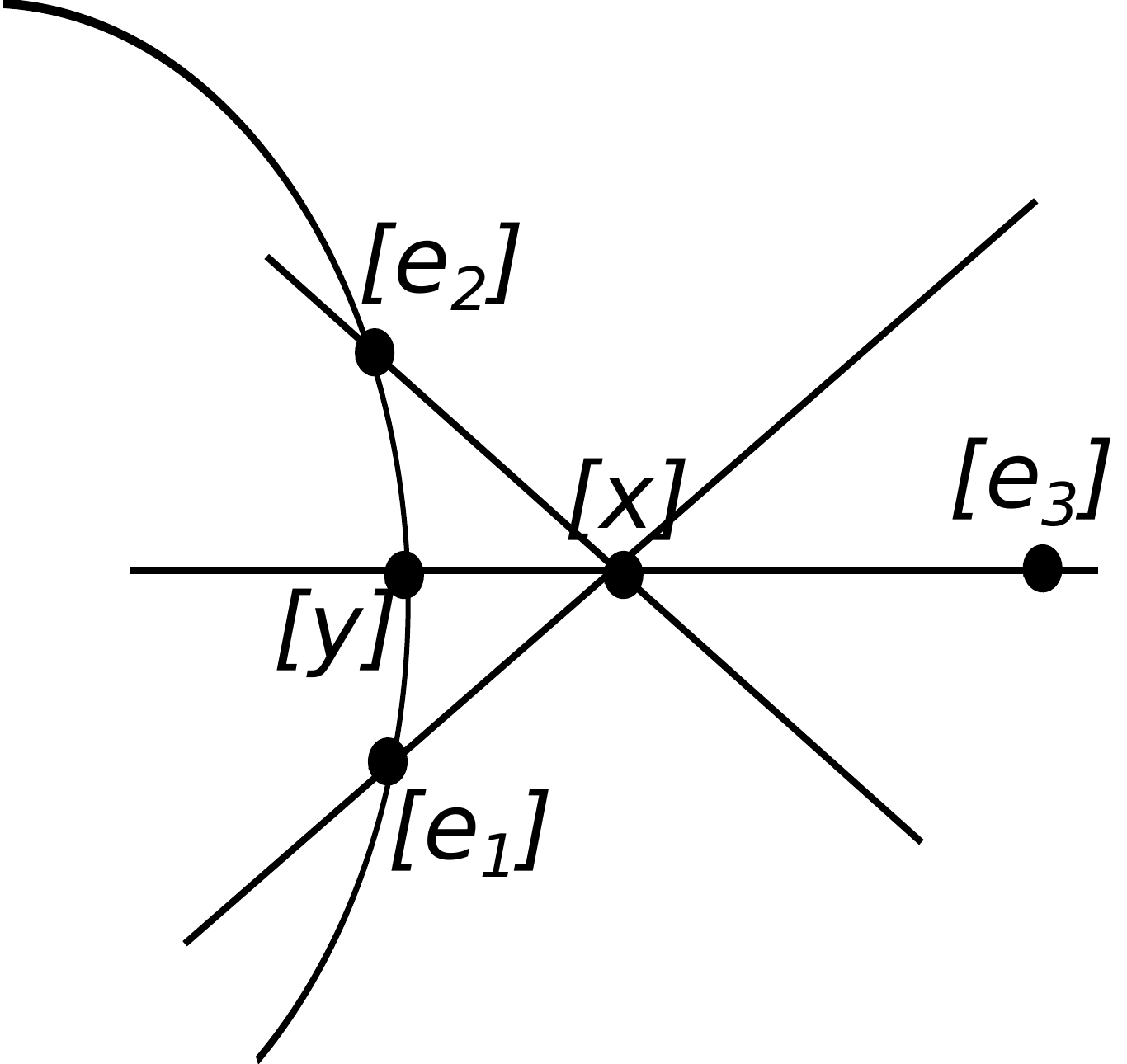}
                \caption{Choosing $\NS{B}$}
                \label{fig:gull}
        \end{subfigure}%
\hspace{.5in}
        \begin{subfigure}[b]{0.35\textwidth}
                \includegraphics[width=\textwidth]{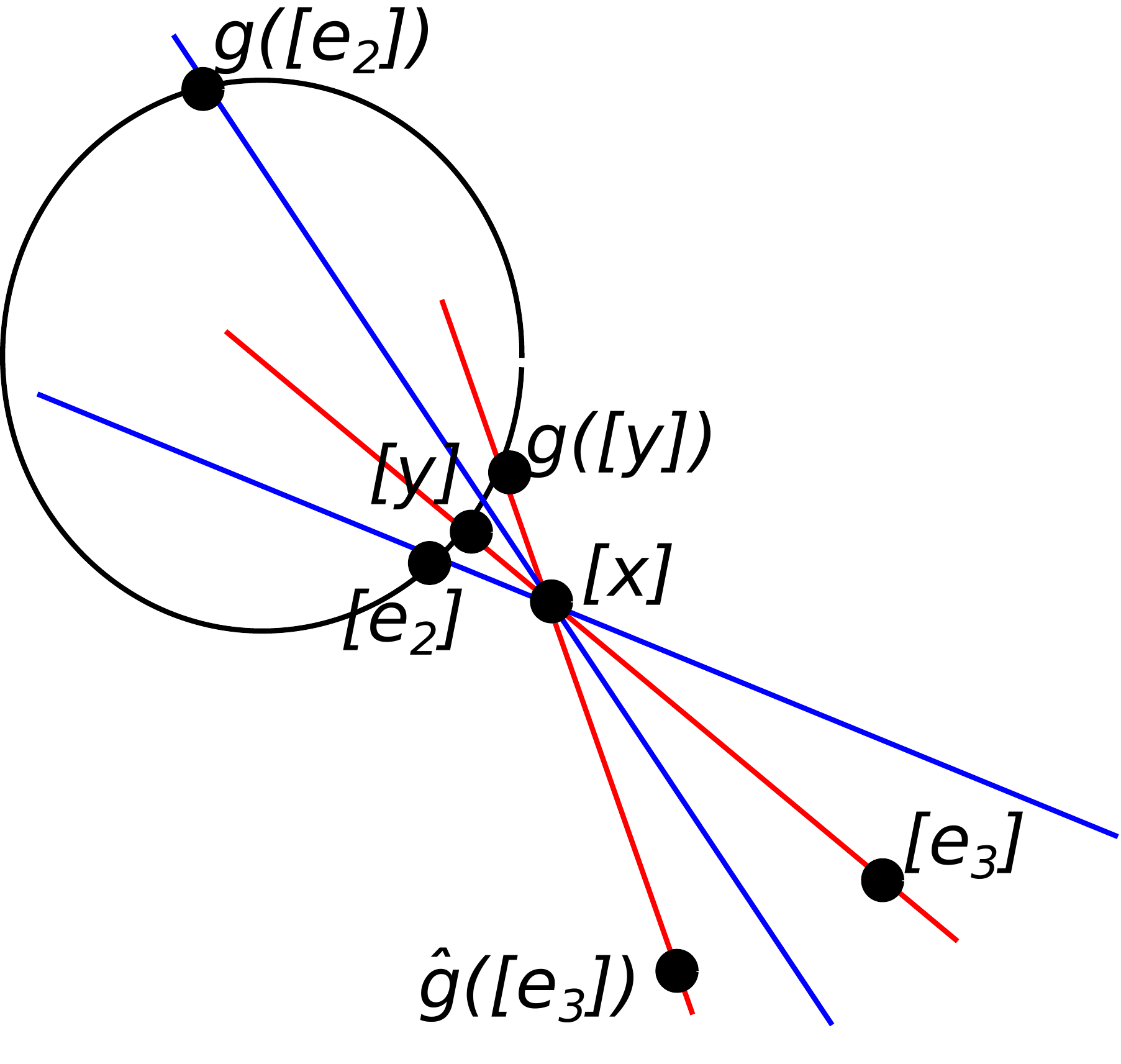}
                \caption{The action on $\NS{L}(x)$ extended to $\HR P^2$. See the image of $\NS B$ under $\hat{\NS{G}}$.}
                \label{fig:tiger}
        \end{subfigure}
        ~ %add desired spacing between images, e. g. ~, \quad, \qquad, \hfill etc.
          %(or a blank line to force the subfigure onto a new line)
        \caption{Extending $\NS{G}$ to $\hat{\NS{G}}$}\label{fig:animals}
\end{figure}

We have shown the ratio $\frac{\varepsilon \delta}{\eta}$ determines the action of $\textrm{Fin}(\NS{G})$ on $\NS{L}(x)$.  Next we show how to extend $\NS{G} \leq SL_2 (\HR)$ to $\hat{\NS{G}} \leq SL_3 (\HR)$. 

\begin{lemma}\label{link_ext} Let $\NS{G}= \NS G (\delta) \leq SL_2 ( \HR)$ act on $\NS{H}$. If $\textrm{Fin}( \NS{G})$ acts finitely on $\NS{L}(x)$, define $\NS{\hat{G}} \leq SL_3 (\HR)$ to be the set of elements which fix $x$, act finitely on $\HR P^2$, and for every $\hat {g} \in \NS{\hat{G}} $, the restriction $\hat{g} | _ { \NS{H}} = g $ for some $g \in \NS{G}$.   Then $\hat{G}: = sh(\textrm{Fin}(\hat{\NS{G}})) $ is a conjugacy limit group, and the action of $\hat{G} $ on $H$ coincides with the action of $G$ on $H$. 
\end{lemma}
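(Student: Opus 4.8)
The plan is to realize $\hat{\NS{G}}$ as the finite part of an explicit hyperreal Cartan conjugate, deduce the first assertion from Theorem \ref{HRlimgp}, and deduce the second from surjectivity of the restriction map to $\NS{H}$, which is exactly where the link hypothesis is used. First I would assemble the conjugating matrix: choosing representative sequences for $p,q,x$ and taking them as columns gives $\NS{P}=[P_n]\in GL_3(\HR)$ with $P_n\in GL_3(\R)$, and since $p,q,x$ are in general position $P_n$ is invertible for $\mathcal{F}$-almost all $n$. Set $\hat{\NS{G}}(p,q,x):=\NS{P}\NS{C}\NS{P}^{-1}$, the positive diagonal group conjugated to fix $p,q,x$. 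I would then check that the group $\hat{\NS{G}}$ of the statement is exactly $\textrm{Fin}(\hat{\NS{G}}(p,q,x))$ by a double inclusion: any finite element fixing $p,q,x$ restricts on $\NS{H}$ to a finite transformation fixing $p$ and $q$, hence to an element of $\NS{G}$; conversely any element that fixes $x$, is finite, and restricts to $\NS{G}$ necessarily fixes $p$ and $q$ (every element of $\NS{G}$ does), so it lies in the stabilizer $\hat{\NS{G}}(p,q,x)$. With this identification $\hat{G}=sh(\textrm{Fin}(\hat{\NS{G}}(p,q,x)))$, and Theorem \ref{HRlimgp} shows that this shadow is a conjugacy limit of $C$ (applied along a subsequence on which $P_n C P_n^{-1}$ converges, which exists by compactness of the space of closed subgroups). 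This settles the first assertion.

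For the second assertion, consider the restriction homomorphism $\rho:\hat{\NS{G}}\to\NS{G}$, $\hat{g}\mapsto \hat{g}|_{\NS{H}}$. Since taking shadows commutes with restriction to the preserved line, $sh(\hat{g})$ acts on $H=sh(\NS{H})$ exactly as $sh(\rho(\hat{g}))\in G$ does, so every element of $\hat{G}$ acts on $H$ as some element of $G$. The reverse inclusion is the real content: given $g\in\textrm{Fin}(\NS{G})$ I must produce a finite extension $\hat{g}\in\hat{\NS{G}}$ with $\hat{g}|_{\NS{H}}=g$. The extensions of $g$ inside $\hat{\NS{G}}(p,q,x)$ form a one-parameter family, the kernel of $\rho$ scaling the radial direction from $\NS{H}$ toward $x$. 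Every such extension acts on the link by $\langle x,z\rangle\mapsto\langle x,g(z)\rangle$, which depends only on $g$ and is finite by hypothesis (equivalently by Corollary \ref{linkfin}); and it acts finitely on $\NS{H}$ because $g$ does. I would then choose the radial parameter so that the appreciable basis $\NS{B}$ for $\NS{L}(x)$ of Lemma \ref{link1}, together with $x$, is carried to an appreciable basis of $\HR P^2$. Such a $\hat{g}$ maps an appreciable basis to an appreciable basis and is therefore finite, giving $\rho(\textrm{Fin}(\hat{\NS{G}}))=\textrm{Fin}(\NS{G})$; passing to shadows makes $\hat{G}\to G$ surjective, so the two actions on $H$ coincide.

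The main obstacle is this last finiteness verification: showing that finiteness along $\NS{H}$ and along the pencil $\NS{L}(x)$ can be promoted, by the correct choice of the radial parameter, to finiteness on all of $\HR P^2$. This is precisely where the hypothesis that $\textrm{Fin}(\NS{G})$ acts finitely on $\NS{L}(x)$ is indispensable: without it the only available extensions move points of $\HR P^2$ an infinite distance, and $\hat{G}$ would drop dimension. I expect the cleanest route is the basis adapted to $x$ and $\NS{H}$ indicated in Figure \ref{fig:animals}, reducing the estimate to the order-of-magnitude computation of $\frac{\varepsilon\delta}{\eta}$ already carried out in Lemma \ref{link} and Corollary \ref{linkfin}.
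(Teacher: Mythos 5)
Your proposal is correct and follows essentially the same route as the paper: identify $\hat{\NS{G}}$ with the finite part of the conjugate of $\NS{C}$ stabilizing $p,q,x$ so that Theorem \ref{HRlimgp} applies, and then, for each $g\in\textrm{Fin}(\NS{G})$, use the one-parameter radial freedom in the extension together with the finite action on $\NS{L}(x)$ to choose $\hat{g}$ carrying an appreciable basis adapted to $x$ and $\NS{H}$ to an appreciable basis. Your treatment of the second assertion via surjectivity of the restriction homomorphism is slightly more explicit than the paper's, but it is the same argument.
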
 

\begin{proof} 
We pick a projective basis $ \NS{B}= \{[e_1],[ e_2], [e_3],[ e_1 + e_2 + e_3]\}$ for $\HR P^2$, and show the image of $\NS{B}$ under $\NS{\hat{G}}$ is an appreciable basis.  By assumption, $\NS{G}$ fixes $[e_1] \in \NS{H}$.   Let $[y] \in \NS{H}$ be the point closest to $[x]$.  Choose $[e_3] \in \langle {[x],[y]}\rangle$ an appreciable distance from $[e_1]$.  Then the lines $\langle {[x],[e_1]} \rangle$ and $\langle{[x],[e_3]}\rangle= \langle{[x],[y]}\rangle$ are at an appreciable angle, since $[x]$ is infinitesimally close to $\NS{H}$.  Pick $[e_2] \in \NS{H}$ an appreciable distance from $[e_1]$ and $[e_3]$, and such that $\langle [x], [e_2] \rangle$ and $\langle {[x], [e_3]}\rangle$ are at an appreciable angle. Again, $\langle{[x],[e_2]}\rangle$ and $\NS{H}$ are at an appreciable angle, since $[x]$ is infinitesimally close to $\NS{H}$. 
%$\textrm{Fin}(\NS{G})$ acts finitely on $\NS{H}$, so 

If $g \in \textrm{Fin}(\NS{G})$, then $g ([e_2])$ is an appreciable distance from $g([e_1]) =[e_1]$.   Let $\hat{g} \in \NS{\hat{G}}$, so that $\hat{g}|_{\NS{H}} = g$.  Since $\hat{g}([e_3])$ lies on the line $\langle {[x],g([y])} \rangle$, there is a 1-parameter hyperreal family of choice of image of $[e_3]$.  Choose $\hat{g}([e_3])$ to be an appreciable distance from $\hat{g}([e_1]) = [e_1]$.  The action of $\textrm{Fin}(\NS{G})$ on $\NS{L}(x)$ is finite, so $\langle{[x], [\hat{g}(e_i)]}\rangle$ and $\langle{[x], \hat{g}([e_j])}\rangle$ are an appreciable distance apart, for all $i \neq j$.   Thus the image of an appreciable basis is an appreciable basis, and $\{p,q,x\}$ is the image of the usual basis under a nonstandard projective transformation.  By theorem \ref{HRlimgp}, $sh(\textrm{Fin}(\hat{\NS{G}})) $ is a conjugacy limit group, and by construction $\NS{ \hat{G}} | _ {\NS{H}}$ is isomorphic to a subgroup of  $ \NS{G}$.  \qed
\end{proof}

% Recall $\delta $ is the length of the longest side of the nonstandard triangle, and $\eta$ is the smallest altitude. Together, $\delta$ and $\eta$ determine the number of infinitesimal sides and angles.  For small angles, $\sin(\theta) \approx \theta$, so $\frac{\eta}{\varepsilon}$ is approximately the largest infinitesimal angle. The ratio $\frac{\varepsilon \delta}{\eta}$ is the ratio of the longest side to the largest infinitesimal angle, as in theorem \ref{mainNS}. 

By theorem \ref{maindig} there are 5 conjugacy classes of conjugacy limit groups, and by theorem \ref{HRlimgp} every conjugacy limit group of $C$ is $sh(\textrm{Fin} ( \NS P \NS C \NS P^{-1}))$.  We show next that the conjugacy class of any conjugacy limit group is completely determined by the hyperreal numbers $\delta, \eta$ and $\frac{\varepsilon \delta}{\eta}$.  Recall $0 \leq |\eta| \leq | \delta|$, and $0 \leq |\varepsilon| \leq |\delta|$.

\begin{proposition}\label{2to3}  Given $p,q,x \in \HR P^2$ in general position, let $\hat {\NS G} (p,q,x) \leq SL_3 (\HR)$ be the group preserving $p,q,x$.  Set $\hat{G} = sh (\hat {\NS G} (p,q,x))$.
%there are 5 groups up to conjugacy, $\hat{G} := sh(\textrm{Fin}(\NS{\hat{G}} (p,q,x)))  \leq SL_3(\R)$, such that $\hat{G}|_{H}$ is isomorphic to a subgroup of  $G = sh (\textrm{Fin}( \NS{G} (p,q))) \leq SL_2 (\R)$. 
\begin{enumerate} 
\item If $\delta$ is appreciable, and  
\begin{enumerate} 
\item $\eta$ is appreciable, then $\hat{G}$ is conjugate to $C$ and $\frac{\varepsilon \delta}{\eta}$ is appreciable 
\item  $\eta$ is infinitesimal, and $p \in \textrm{Gal}_{\eta}(x)$,  then  $\hat{G}$ is conjugate to $F$ and  $\frac{\varepsilon \delta}{\eta}$ is finite
\item  $\eta$ infinitesimal, but $p \not \in \textrm{Gal}_{\eta}(x)$, then  $\hat{G}$ is conjugate to $N_3$ and $\frac{\varepsilon \delta }{\eta}$ is infinite 
\end{enumerate} 
\item If $\delta$ is infinitesimal, then $ \eta$ is infinitesimal, and 
\begin{enumerate} 
 \item if $\frac{\varepsilon \delta }{  \eta}$ is infinitesimal, then $\hat{G}$ is conjugate to $N_2$.  
 \item  if $\frac{\varepsilon \delta}{  \eta}$is appreciable, then $\hat{G}$ is conjugate to $N_1$. 
 \item  if $\frac{\varepsilon \delta}{  \eta}$ is infinite, then $\hat{G}$ is conjugate to $N_3$. 
 \end{enumerate} 
 \end{enumerate}
\end{proposition}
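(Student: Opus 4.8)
The plan is to analyze $\hat G = sh(\textrm{Fin}(\hat{\NS G}(p,q,x)))$ through its two ``coordinate'' actions: the action of $\NS G = \NS G(\delta)$ on the base line $\NS H = \langle p,q\rangle$, which is governed by $\delta$ via Corollary \ref{2pt}, and the induced action on the link $\NS L(x)$, which is governed by the ratio $\alpha = \frac{\varepsilon\delta}{\eta}$ via Corollary \ref{linkfin}. Once both actions are determined, I identify $\hat G$ with an element of $\mathcal Q$ by reading off its fixed points and preserved lines (its maximal configuration) and invoking the classification in Theorems \ref{maindig} and \ref{maintri}. Throughout I use the normalization that the shortest altitude is measured from $x$; in affine coordinates $p=(0,0)$, $q=(\delta,0)$, $x=(\varepsilon,\eta)$ this says $pq$ is the longest side, and computing twice the area as $|\delta\eta|$ shows this is equivalent to the single inequality $\eta^2\le\varepsilon(2\delta-\varepsilon)$, together with $|\eta|\le|\delta|$ and $|\varepsilon|\le|\delta|$. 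This inequality is the tool that converts the qualitative hypotheses of each case into the order of $\alpha$.

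First I would treat Part 1, where $\delta$ is appreciable, so by Corollary \ref{2pt} the group $\NS G$ is hyperbolic on $\NS H$ and fixes two distinct shadow points $sh(p)\ne sh(q)$. In case 1(a), with $\eta$ appreciable, the constraint $\eta^2\le\varepsilon(2\delta-\varepsilon)$ gives $\varepsilon$ an appreciable lower bound, hence $\varepsilon$ and $\alpha$ are appreciable and $sh(x)$ stays off $\NS H$; then $\hat G$ fixes the three shadow vertices $sh(p),sh(q),sh(x)$ in general position, so it is simultaneously diagonalizable and, being a $2$-dimensional connected subgroup, conjugate to $C$ by Theorem \ref{maindig}(1). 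In cases 1(b) and 1(c), $\eta$ is infinitesimal, the altitude collapses, and $sh(x)$ lands on $H$; the condition $p\in\textrm{Gal}_{\eta}(x)$ is exactly the condition that $\varepsilon/\eta$ be finite. When it holds (1(b)), $\alpha=(\varepsilon/\eta)\delta$ is finite and $\textrm{Fin}(\NS G)$ acts finitely on $\NS L(x)$, so Lemma \ref{link_ext} builds $\hat G$ with one collapsing side and one collapsing angle, the configuration $TF$, giving $\hat G\cong F$. When it fails (1(c)), $\varepsilon/\eta$ and hence $\alpha$ are infinite, the link action is infinite, and I argue $\hat G\cong N_3$.

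Next I would treat Part 2, where $\delta$ is infinitesimal; then $|\eta|\le|\delta|$ forces $\eta$ infinitesimal, $\NS G$ is parabolic on $\NS H$ by Corollary \ref{2pt}, and $sh(p)=sh(q)=sh(x)$, so all three vertices collapse to a single point with all sides infinitesimal. Here the three sub-cases are separated purely by the order of $\alpha$ via Corollary \ref{linkfin}: when $\alpha$ is infinitesimal (2(a)), $sh(\textrm{Fin}(\NS G))$ acts as the identity on $L(x)$, so $\hat G$ preserves every line through the collapsed point, which is the maximal configuration of $N_2$; when $\alpha$ is appreciable (2(b)), the link action is finite and nontrivial, so Lemma \ref{link_ext} yields the generic unipotent $N_1$; and when $\alpha$ is infinite (2(c)) the link action is infinite, giving $N_3$. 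In each finite-link case the identification is finished by comparing the preserved configuration against the list $\mathcal{TQ}$ through Theorems \ref{maintri} and \ref{maindig}.

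The main obstacle is the pair of infinite-link cases 1(c) and 2(c) that must be shown to yield $N_3$: Lemma \ref{link_ext} explicitly assumes $\textrm{Fin}(\NS G)$ acts \emph{finitely} on $\NS L(x)$, so it does not apply, and in 2(c) all three shadow vertices coincide, so I cannot simply invoke ``a group fixing three collinear points fixes the whole line.'' I expect the clean route is duality: the automorphism of the digraph induced by projective duality (the Remark following Proposition \ref{normalizer}) exchanges $N_2\leftrightarrow N_3$ and swaps the pencil of lines through a point with the pencil of points on a line, converting an infinite link action into an infinitesimal one, so the $N_3$ cases reduce to the $N_2$-type argument already established for $\alpha$ infinitesimal. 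A secondary technical point, needed to make Part 1 honest, is verifying that the shortest-altitude inequality really forces $\varepsilon$ appreciable in 1(a) and pins the order of $\alpha$ in 1(b) and 1(c); these are short estimates from the area identity, but they must be handled with care since $\varepsilon$ is a priori constrained only by $|\varepsilon|\le|\delta|$.
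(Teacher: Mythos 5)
Your overall decomposition --- the base action on $\NS H$ controlled by $\delta$ via Corollary \ref{2pt}, the link action on $\NS L(x)$ controlled by $\alpha=\frac{\varepsilon\delta}{\eta}$ via Corollary \ref{linkfin}, and identification inside $\mathcal Q$ by the preserved configuration --- is exactly the paper's, and your altitude inequality $\eta^2\le\varepsilon(2\delta-\varepsilon)$ is a welcome way to make the order-of-$\alpha$ claims in Part 1 precise (the paper simply asserts that $\varepsilon$ is appreciable in case 1(a)). The genuine gap is in the two cases you yourself flag as the main obstacle, 1(c) and 2(c). You propose to reduce them to the $N_2$ case by projective duality, but you never verify the one thing that reduction needs: that dualizing a nonstandard triangle in the ``$\alpha$ infinite'' class produces a nonstandard triangle lying in the ``$\alpha$ infinitesimal'' class with respect to the paper's normalization (shortest altitude measured from $x$, $d(y,p)\le d(y,q)$, and the resulting $\delta',\varepsilon',\eta'$). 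That is a nontrivial computation --- the dual side lengths are the angles of the original triangle, and one must re-run the whole labelling convention and recompute $\alpha'$ --- and without it the two $N_3$ cases are simply not proved. (One would also need, though this part is easy, that $sh\circ\textrm{Fin}$ commutes with inverse-transpose.)

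The duality detour is also unnecessary. The paper's argument is direct: an element of $\textrm{Fin}(\hat{\NS G})$ is a finite matrix fixing $x$, so it acts finitely on $\NS L(x)$; when $\alpha$ is infinite, Corollary \ref{linkfin} shows the subgroup of $\textrm{Fin}(\NS G)$ acting finitely on $\NS L(x)$ is infinitesimal, so the restriction of $\hat G$ to $H=sh(\NS H)$ is the identity, i.e.\ $\hat G$ fixes the line $H$ pointwise. Since $N_3$ is the only group in $\mathcal Q$ that acts as the identity on a projective line, and $\hat G$ is conjugate into $\mathcal Q$ by Theorems \ref{HRlimgp} and \ref{maindig}, this settles both 1(c) and 2(c) without Lemma \ref{link_ext} and without duality. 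Replacing your duality sketch with this argument would close the gap; the rest of your outline then matches the paper's proof essentially case for case.
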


\begin{proof}
\begin{description}
\item[Case 1a:]  The points are an appreciable distance apart, and each point is an appreciable distance from the hyperplane containing the other two points.  Since $\eta$ is appreciable, $\textrm{Gal}_{\eta}(x) \cap \NS{H }=\NS{H}$, so the action of $\textrm{Fin}(\NS{G})$ preserves $\textrm{Gal}_{\eta}(x)$, and $\NS G$ acts finitely on $\NS L(x)$. The action of $\hat{G}$ on each of the three projective lines is hyperbolic.  Thus $\hat{G}$ is conjugate to $C$, since this is the only group in $\mathcal{Q}$ which acts hyperbolically on more than one line.  

Since $\delta, \varepsilon $ and $\eta$ are all appreciable, then
$\frac{\varepsilon \delta}{\eta}$ is appreciable. 

 \item[Case 1b:]  Suppose $p \in \textrm{Gal}_{\eta}(x)$.   Then $p$ and $x$ are infinitesimally close, and $q$ is an appreciable distance away from $\langle p, x \rangle$. So, $sh(\Delta(p,q,x))$ has two distinct points, $sh(p)=sh(x)$ and $sh(q)$, and two distinct lines, $sh(\langle p,q \rangle) =sh(\langle x,q \rangle)$ and $sh(\langle p, x \rangle)$.  The action of $\hat{G}$ on $sh(\langle{p,x}\rangle)$  is parabolic, since $sh(\langle{p,x}\rangle)$ contains a unique fixed point.  The action of $\hat{G}$ on $H= sh(\langle p, q \rangle)$ is hyperbolic, since $H$ contains two distinct fixed points.    Thus $\hat{G}$ has two 0 dimensional invariant subspaces, and two 1 dimensional invariant subspaces, one with a parabolic action, and one with a hyperbolic action. So, $\hat{G}$ is conjugate to $F$. 
 
 Since $\delta$ is appreciable, $\frac{\varepsilon \delta}{\eta} \approx \frac{\varepsilon}{\eta}$.  Since $p \in \textrm{Gal}_{\eta}(x)$, then $ | \varepsilon | \leq | \eta|$, which implies $\frac{\varepsilon}{\eta}$ is finite. 
 
\item[Case 1c:] Since $\delta $ is appreciable, $\frac{\varepsilon \delta} {\eta} \approx \frac{\varepsilon}{\eta}$.  Since $\eta$ is infinitesimal, and $\textrm{Gal}_{\eta}(x)$ does not contain $p$, then $| \varepsilon|$ has larger order than  $| \eta|$, which implies $\frac{\varepsilon}{\eta}$ is infinite.   

Since  $\frac{\varepsilon \delta}{\eta}$ is infinite, then the action of $\textrm{Fin}(\NS{G})$ on $\NS{L}(x)$ is infinite by corollary \ref{linkfin}.  The subgroup of $\textrm{Fin}(\NS{G})$ which acts finitely on $\NS{L}(x)$ is infinitesimal, and its shadow in $G$ is the identity.  Thus $\hat{G}$ is conjugate to $N_3$,  since this is the only group in $\mathcal{Q}$ which acts as the identity on a line.

\item[Case 2a:] By assumption $\delta$ and $\eta$ are infinitesimal, so the points $p,q,x$ are infinitesimally close. Therefore $sh(\Delta(p,q,x))$ has one point, $sh(p)=sh(q)=sh(x)$, which is the fixed point under the action of $\hat{G}$. By lemma \ref{unip} which follows this proof, $\hat{G}$ is unipotent, and so $ \hat{G}$ acts parabolically on any line it preserves through $sh(p)$.  Since $ \frac{\varepsilon \delta }{  \eta}$ is infinitesimal,   $\hat{G}$ acts as the identity on $L(x)$ by corollary \ref{linkfin}, and $\hat{G}$ preserves at least two 1 dimensional invariant subspaces.  There are two groups in $\mathcal{Q}$ with a single fixed point, $N_1$ and $N_2$.  But, $N_1$ preserves only one line, and $\hat{G}$ preserves at least two lines, so $\hat{G}$ is conjugate to $N_2$.  

\item[Case 2b:] Since $\frac{\varepsilon \delta }{  \eta}$  is appreciable, the action of $\textrm{Fin}(\NS{G})$ on $\NS{L}(x)$ is finite by corollary \ref{linkfin}, and $G$ does not fix $L(x)$ point wise.  By lemma \ref{link_ext}, the action of $\hat{G}$ on $H$ coincides with the nontrivial action of  $G$ on $H$. Since $\delta$ and $\eta$ are infinitesimal, the points $p,q,x$ are infinitesimally close,  so $sh(\Delta(p,q,x))$ has one point. By lemma \ref{unip}, $\hat{G}$ acts parabolically  on $H$, since $\hat{G}$ fixes a single point.  Since $\hat{G}$ does not fix $L(x)$,  then $\hat{G}$ has only one 1 dimensional invariant subspace and one 0 dimensional invariant subspace,  so $\hat{G}$ is conjugate to $N_1$.  

  \item[Case 2c:] Since $\frac{ \varepsilon \delta }{\eta}$ is infinite, then $\NS{G}$ acts infinitely on $\NS{L}(x)$ by corollary \ref{linkfin}.   The subgroup of $\textrm{Fin}(\NS{G})$ which acts finitely on $\NS{L}(x)$ is infinitesimal, and its shadow in $G$ is the identity.  Thus $\hat{G}$ is conjugate to $N_3$,  since this is the only group in $\mathcal{Q}$ which acts as the identity on a line. \qed
  %The cases in this theorem are mutually exclusive, and cover all possible nonstandard triangles.  These are all possible limit groups by lemma \ref {2dim} and the proof of the first part of theorem \ref{main}. 
  \end{description}
\end{proof} 

Recall from lemma \ref{uppertri} that every conjugacy limit group is conjugate to one under a sequence of upper triangular sequence of matrices, and by theorem \ref{HRlimgp} that any conjugacy limit of $C \leq SL_3(\R)$ is $\hat{G} : = \textrm{Sh}( \textrm{Fin}(\NS{P} \NS C \NS{P}^{-1}))$, where 
\begin{equation}\label{NSP}
 \NS{P}=  \left(
\begin{array}{ccc}
1 & 1 & 1 \\
0 & \delta & \varepsilon \\
0 & 0 & \eta \end{array}
\right). \end{equation}

\begin{lemma}\label{unip} If $\delta$ is infinitesimal, then $\hat{G} = sh (\textrm{Fin} ( \hat { \NS G} (p,q,x)))$ is conjugate to a unipotent group. 
\end{lemma}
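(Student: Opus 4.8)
The plan is to compute the conjugate $\hat{\NS G}=\NS P\NS C\NS P^{-1}$ explicitly, with $\NS P$ the upper triangular matrix of \ref{NSP}, and then to show that passing to shadows in the finite part collapses the diagonal to the identity. Since $\NS P$ is upper triangular, every element of $\hat{\NS G}$ is upper triangular with diagonal the eigenvalues $a,b,c=1/(ab)$ of the underlying matrix $\textrm{diag}(a,b,c)\in\NS C$ (with $a,b>0$). Thus it suffices to prove that for every $g\in\textrm{Fin}(\hat{\NS G})$ one has $sh(a)=sh(b)=sh(c)=1$; then $sh(g)$ is upper unitriangular, so $\hat G=sh(\textrm{Fin}(\hat{\NS G}))$ is literally a group of unipotent matrices, a fortiori conjugate to one. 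This route avoids invoking Kolchin's theorem, since the explicit triangular form does the job directly.

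First I would carry out the multiplication $\NS P\,\textrm{diag}(a,b,c)\,\NS P^{-1}$. It is upper triangular with diagonal $(a,b,c)$ and off-diagonal entries
$$g_{12}=\frac{b-a}{\delta},\qquad g_{23}=\frac{(c-b)\varepsilon}{\eta},\qquad g_{13}=\frac{(a-b)\varepsilon+(c-a)\delta}{\delta\eta}.$$
Membership in $\textrm{Fin}(\hat{\NS G})$ means all of these, together with $a,b,c$, are finite. From $a,b$ finite and $c=1/(ab)$ finite one checks that $ab$ is appreciable (otherwise $c$ would be infinite), whence $a$ and $b$ are appreciable and $sh(a),sh(b)>0$.

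Next I would extract two consequences. Because $\delta$ is infinitesimal, finiteness of $g_{12}$ forces $b-a$ to be infinitesimal, so $sh(a)=sh(b)=:s$ and hence $sh(c)=1/s^{2}$. The decisive constraint is $g_{13}$: writing $b-a=\delta u$ with $u$ finite, it simplifies to $\bigl((c-a)-u\varepsilon\bigr)/\eta$. Here $\varepsilon$ is infinitesimal because $|\varepsilon|\le|\delta|$, so $u\varepsilon$ is infinitesimal, and $\eta$ is infinitesimal because $|\eta|\le|\delta|$. For $g_{13}$ to be finite with an infinitesimal denominator, its numerator must be infinitesimal; this forces $sh(c-a)=0$, i.e. $1/s^{2}=s$, i.e. $s^{3}=1$, so $s=1$ since $s>0$. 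Therefore $sh(a)=sh(b)=sh(c)=1$ for every element of the finite part, and $\hat G$ is unipotent.

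The main point to get right is which finiteness condition is actually binding. The entry $g_{23}$ is not enough: when $s\ne1$ it equals an appreciable number times $\varepsilon/\eta$, and $\varepsilon/\eta$ may well be finite, so no contradiction arises. The work is done by $g_{13}$, whose denominator is the product $\delta\eta$ of two infinitesimals while the offending term $(c-a)\delta$ in its numerator carries only one factor of $\delta$ against an appreciable $c-a$; it is precisely this deficit of one infinitesimal factor that blows up once $\eta$ is infinitesimal. I expect the only genuinely careful bookkeeping to be the appreciability claims — that $a,b$ cannot be infinitesimal, and that $c-a$ is appreciable when $s\ne1$ (using $s^{3}=1\Rightarrow s=1$ for positive reals) — with everything else being the short computation above.
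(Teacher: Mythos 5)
Your proposal is correct and takes the same route as the paper: the paper's proof simply notes that $\varepsilon,\eta$ are infinitesimal and asserts ``it is easy to check that all of the weights must be $1$ in $sh(\textrm{Fin}(\NS{PCP}^{-1}))$,'' and your computation of the conjugate and the analysis of $g_{12}$ and $g_{13}$ is precisely that check, carried out correctly (including the key observation that the binding constraint is the $(1,3)$ entry with its $\delta\eta$ denominator).
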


\begin{proof}  Since $\delta$ is infinitesimal, and $| \delta| \geq | \eta|$ and $| \delta | \geq | \varepsilon|$, then $\eta$ and $\varepsilon$ are infinitesimal.  %The group $\hat{G}$ is conjugate to $sh (\textrm{Fin}( \NS{ PCP}^{-1}))$. 
It is easy to check that all of the weights must be $1$ in $sh (\textrm{Fin}( \NS{ PCP}^{-1}))$. %when $\delta, \varepsilon$ and $\eta$ are infinitesimal. 
\end{proof} 

  This concludes the proof of theorem \ref{mainNS}, by establishing a bijection between equivalence classes of nonstandard triangles and conjugacy limit groups.  We reproduce a table here for completeness.   The ratio $\frac{\varepsilon \delta}{\eta} = \alpha$ in theorem \ref{mainNS}. 
  
  $$\begin{array}{|c|c|c||c|}
  \hline
  \delta & \eta & \frac{\varepsilon\delta}{\eta} & \textrm{Group} \\
  \hline 
  \textrm{appreciable} &   \textrm{appreciable} &     \textrm{appreciable} & C \\
   \textrm{appreciable}& \textrm{infinitesimal} &   \textrm{finite} & F \\
     \textrm{infinitesimal}&   \textrm{infinitesimal}  &  \textrm{appreciable}  &  N_1 \\ 
    \textrm{infinitesimal}&   \textrm{infinitesimal}  &  \textrm{infinitesimal}  &  N_2 \\ 
\textrm{finite}&   \textrm{infinitesimal}  &  \textrm{infinite}  &  N_3 \\
    \hline
  \end{array} 
  $$
%Notice the ratio $\frac{\varepsilon \delta}{\eta}$ does \emph{not} change the shadow of the nonstandard triangle, but it does determine which conjugacy limit group in $\mathcal{Q}$ preserves the shadow of the nonstandard triangle. 
The shadow of $\NS T$ does not depend on $\frac{\varepsilon \delta}{\eta}$, but $sh (\textrm{Fin} (\hat{\NS  G} (p,q,x))$ does depend on $\frac{\varepsilon \delta}{\eta}$.

%\begin{cor}  Let $\NS{T}$ be a nonstandard triangle, $\NS{\hat{G}}$ be the nonstandard diagonal Cartan conjugate preserving $\NS{T}$, and $\hat{G} = \textrm{Sh}(\textrm{Fin}( \NS{\hat{G}})))$ be the limit group preserving $\textrm{Sh}(\NS{T})$.

%\begin{enumerate}

%\item If all angles and sides of $\NS{T}$ are appreciable, $\hat{G}$ is conjugate to $C$. 

%\item If $\NS{T}$ has 1 infinitesimal angle, and 1 infinitesimal side, $\hat{G}$ is conjugate to $F$. 

%\item If $\NS{T}$ has 0 or 1 infinitesimal angles, and all infinitesimal sides, $\hat{G}$ is conjugate to $N_2$. 

%\item Suppose $\NS{T}$ has 2 infinitesimal angles. Then there is only 1 invariant subspace and one weight, so $\hat{G}$ is conjugate to  $N_1$, $N_2 $, or $N_3$.  Depending on whether $ \frac{h}{\theta} $ is infinite, finite, or infinitesimal; $\hat{G}$ acts parabolically ($N_1$ or $N_2$), or as the identity ($N_3$) on this invariant subspace. 
%\end{enumerate} 
%\end{cor}

\begin{corollary}\label{coords} %Given a hyperreal conjugating matrix 
%$$ \NS{P}=  \left(
%\begin{array}{ccc}
%1 & 1 & 1 \\
%0 & \delta & \varepsilon \\
%0 & 0 & \eta \end{array}
%\right), $$
The columns of $\NS{P}$ in (\ref{NSP}) are the coordinates of the vertices of a nonstandard triangle.  The equivalence class of nonstandard triangle determines the conjugacy limit group $\hat{G} $,
%$= \textrm{Sh}( \textrm{Fin}(\NS{P} C \NS{P}^{-1}))$, or in $SL_3(\R)$,
 the conjugacy limit of $C$ under the sequence $P_n \leq GL_3(\R)$ with sequences in the strictly upper triangular portion, $\delta_n,  \epsilon_n, \eta_n,$ converging to $\delta,\varepsilon , \eta$.  By proposition \ref{2to3}, $\hat{G}$ depends only on the relative orders of $\delta$, $\eta$, and $\frac{\varepsilon \delta }{\eta}$. 

%$$\begin{array}{|c|c|c||c|} 
%\hline
%\delta & \eta & \frac{\varepsilon \delta}{\eta} & \textrm{Conjugacy Limit} \\
%\hline 
%\textrm{appreciable} & \textrm{appreciable} & \textrm{appreciable} & C\\ 
%\textrm{appreciable} & \textrm{infinitesimal}&  \textrm{appreciable} & F \\
%\textrm{infinitesimal} &\textrm{infinitesimal} & \textrm{appreciable} & N_1 \\
%\textrm{infinitesimal} & \textrm{infinitesimal} &  \textrm{infinitesimal} & N_2 \\
%\ast  & \textrm{infinitesimal} & \textrm{infinite} & N_3 \\
%\hline
%\end{array} $$

\end{corollary}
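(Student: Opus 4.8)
The plan is to assemble the pieces already established rather than compute anew. First I would read the columns of $\NS{P}$ in (\ref{NSP}) directly: they are the projective points $[1:0:0]$, $[1:\delta:0]$ and $[1:\varepsilon:\eta]$, which are precisely the coordinates of $p,q,x$ fixed in the setup of this section. Hence the columns of $\NS{P}$ are the three vertices of the nonstandard triangle $\NS{T}$, and these are in general position exactly when $\det \NS{P}=\delta\eta$ is nonzero, i.e. $\delta,\eta\neq 0$, so that $\NS{T}$ is a genuine nonstandard triangle. This settles the first assertion of the corollary.

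Next I would reduce an arbitrary conjugacy limit of $C$ to this normal form. By Lemma \ref{uppertri} I may assume, after replacing the limit by a conjugate, that the conjugating sequence $P_n$ is upper triangular. Because $\NS{P}\,\NS{C}\,\NS{P}^{-1}$ is unchanged under $\NS{P}\mapsto \NS{P}D$ for any diagonal $D$ (as $D$ commutes with the diagonal group $\NS{C}$), I may rescale the three columns of $\NS{P}=[P_n]$ so that each has top entry $1$; an upper triangular matrix then takes exactly the shape (\ref{NSP}), with $\delta=[\delta_n]$, $\varepsilon=[\varepsilon_n]$, $\eta=[\eta_n]$ built from the real sequences in the upper triangular positions. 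A further appreciable real coordinate change, permuting the vertices, installs the labeling conventions $0\le|\eta|\le|\delta|$ and $0\le|\varepsilon|\le|\delta|$ (shortest altitude measured from $x$); since it is conjugation by a real matrix it does not alter the conjugacy class of the shadow.

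With the matrix in the form (\ref{NSP}), the identification of the limit is immediate from Theorem \ref{HRlimgp}: setting $\NS{G}=\NS{P}\,\NS{C}\,\NS{P}^{-1}=\hat{\NS{G}}(p,q,x)$, that theorem gives $sh(\textrm{Fin}(\NS{G}))=L$ and identifies $\NS{G}$ as the conjugate of $\NS{C}$ preserving the vertices $p,q,x$. Thus the conjugacy limit of $C$ under $P_n$ is exactly $\hat{G}=sh(\textrm{Fin}(\hat{\NS{G}}(p,q,x)))$, which is the second clause of the statement. Finally, Proposition \ref{2to3} shows that the conjugacy class of $\hat{G}$ is governed solely by the relative orders of $\delta$, $\eta$ and $\frac{\varepsilon\delta}{\eta}=\alpha$; by Lemma \ref{NSeqrel} this same data is what labels the rows of the table, i.e. the equivalence class of $\NS{T}$. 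Hence the equivalence class of the nonstandard triangle determines $\hat{G}$, and $\hat{G}$ depends only on those three orders, as claimed.

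The step I expect to be the main obstacle is the reduction to normal form: verifying that upper triangular reduction, column rescaling, and vertex relabeling can always be carried out (the entries one divides by must be nonzero, which is where general position enters) and, more delicately, that the hyperreal triangle obtained as $\NS{T}=\lim P_n(TC)$ after passing to the subsequence selected by the ultrafilter is consistent with the convergence $P_n C P_n^{-1}\to L$, so that the $\delta,\varepsilon,\eta$ read off from $\NS{P}$ are genuinely the ones controlling the limit.
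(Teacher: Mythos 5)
Your proposal is correct and follows essentially the same route the paper takes: the corollary is presented there as a direct assembly of Lemma \ref{uppertri} (reduction to upper triangular conjugators), Theorem \ref{HRlimgp} (the limit is $sh(\textrm{Fin}(\NS{P}\NS{C}\NS{P}^{-1}))$ for $\NS{P}$ as in (\ref{NSP})), and Proposition \ref{2to3} together with Lemma \ref{NSeqrel}. The extra care you take with the column rescaling and the labeling conventions only makes explicit normalizations the paper performs silently in its setup.
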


\section{Higher Dimensions} %%%%%%%%%%%%%%%%%%%%%%%%
Haettel shows in $SL_4(\R)$ that the space of 3 dimensional abelian groups is equal to the space of conjugacy limits of the diagonal Cartan subgroup.   Are these two spaces the same in higher dimensions?   Haettel has shown they are the same for $n =3,4$. For $n=5,6$, the author gives examples in \cite{LeitnerSLn} of $n-1$ dimensional abelian subgroups of $SL_n(\R)$ which are not conjugacy limits of the diagonal Cartan subgroup.   When $n \geq 7$, Haettel, and Iliev and Manivel show by dimension count that there are $n-1$ dimensional abelian subgroups of $SL_n(\R)$ which are not conjugacy limits of the Cartan subgroup.   In fact, when $n \geq 7$, the author shows in \cite{LeitnerSLn} that there are infinitely many conjugacy classes of conjugacy limits of the diagonal Cartan subgroup. 

Conjugacy limits of the diagonal Cartan subgroup may be studied in higher dimensions using  nonstandard simplices.  Again, we build a nonstandard $n$-simplex, $\NS{T}_n$ by adding a point, $x$, to a nonstandard $(n-1)$-simplex, $\NS{T}_{n-1}$.  Let $\NS{G}$ be the group that preserves $\NS{T}_{n-1}$. The group, $\hat{\NS{G}}$, that preserves $sh(\NS{T}_n)$ is the standardization of the finite part of the extension group, analogous to lemma \ref{link_ext}.  Let $\eta$ be the distance in $\HR P^n$ from $x$ to $\NS{T}_{n-1}$, and $\NS{H}$ be the highest dimensional cell of $\NS{T}_{n-1}$ such that $\textrm{Gal}_n (x) \cap \NS{H} \neq \emptyset$. Then $\hat{\NS{G}}$ is an extension of the subgroup of $\NS{G} \curvearrowright \NS H$ that acts finitely $\NS{L}(x)$. 

%%%%%%%%%%%%%%%%%%%%%%%%%%%%%
\section*{Acknowledgements} 
The author thanks \emph{Daryl Cooper} for many insightful conversations, patience, and for suggesting the use of the hyperreals. 
The author thanks \emph{Darren Long}, \emph{David Dumas}, and \emph{Jeff Danciger} for many helpful discussions.  The referee also provided some excellent ideas for restructuring the paper, and improving the clarity of some of the arguments. 

The author was partially supported by NSF grants  DMS--0706887,  1207068 and  1045292.
The author spent fall 2013 at ICERM, and had many illuminating discussions with the other visiting academics there. 
The author acknowledges support from U.S. National Science Foundation grants DMS 1107452, 1107263, 1107367 {\em RNMS: GEometric structures And Representation varieties} (the GEAR Network).

%%%%%%%%%%%%%%%%%%%%%%%%%%%%%%%%%%%%%%

\end{document}